\titlespacing{\section}{0pt}{12pt}{0pt}
\titlespacing{\subsection}{0pt}{6pt}{0pt}
\long\def\symbolfootnote[#1]#2{\begingroup
\def\thefootnote{\fnsymbol{footnote}}\footnote[#1]{#2}\endgroup}
\renewcommand{\@dotsep}{10000} 
\DeclareMathOperator{\arccosh}{arccosh}
\DeclareMathOperator{\arcsinh}{arcsinh}
\newcommand{\bi}{\begin{itemize}}
\newcommand{\ei}{\end{itemize}}
\newcommand{\bq}{\begin{que}}
\newcommand{\eq}{\end{que}}
\newcommand{\be}{\begin{equation*}}
\newcommand{\ee}{\end{equation*}}
\newcommand{\bb}{\mathbb}
\newtheorem{que}{Question}
\newtheorem{thm}{Theorem}[section]
\newtheorem{lem}{Lemma}[section]
\newtheoremstyle{plainnobf}
  {}
  {}
  {}
  {}
  {\bfseries}
  {.}
  {.5em}
  {}
\theoremstyle{plainnobf}
\newtheorem{example}{Example}[section]
\newtheorem{dfn}{Definition}[section]
\newtheorem*{rk*}{Remark}
\newtheorem{cor}[thm]{Corollary}
\newtheorem{conj}{Conjecture}[section]
\date{\today}
\begin{document}

\begin{center}
{\Large \bfseries Ortho-integral surfaces}\vspace{.2cm} \\
{\large Nhat Minh Doan\symbolfootnote[1]{Research supported by FNR PRIDE15/10949314/GSM, NRF E-146-00-0029-01, and NAFOSTED 101.04-2023.33. \vspace{.1cm} \\{\em 2021 Mathematics Subject Classification:} Primary: 32G15, 57K20, 37D20. Secondary: 30F10, 30F60, 53C23, 57M50. \\{\em Keywords and phrases:} Geometric identities, hyperbolic surfaces, orthogeodesics, Diophantine equations, Vieta jumping.}}
\end{center}

\allowdisplaybreaks
\begin{abstract}

This paper introduces a combinatorial structure of orthogeodesics on hyperbolic surfaces and presents several relations among them. As a primary application, we propose a recursive method for computing the trace (the hyperbolic cosine of the length) of orthogeodesics and establish the existence of surfaces where the trace of each orthogeodesic is an integer. These surfaces and their orthogeodesics are closely related to integral Apollonian circle packings. Notably, we found a new type of root-flipping that transitions between roots in different quadratic Diophantine equations of a certain type, with Vieta root-flipping as a special case. Finally, we provide a combinatorial proof of Basmajian's identity for hyperbolic surfaces, akin to Bowditch's combinatorial proof of the McShane identity.

\end{abstract}


\section{Introduction} 

The set of orthogeodesics, first introduced in \cite{basmajian1993orthogonal} by Basmajian in the early 90s, consists of geodesic arcs perpendicular to the boundary of $M^n$ at their endpoints, where $M^n$ is a compact hyperbolic $n$-manifold with non-empty totally geodesic boundary.  In that work, Basmajian proved a beautiful identity that connects the ortho-length spectrum with the total volume of the boundary of the hyperbolic manifold. Over the past three decades, significant progress has been made in understanding the set of orthogeodesics across various contexts and its applications,  see e.g. \cite{basmajian2024orthosystoles,basmajian2020prime,bell2023counting,belolipetsky2022lower,bridgeman2011orthospectra,bridgeman2021dilogarithm,bridgeman2010hyperbolic, bridgeman2014moments,calegari2010chimneys,fanoni2020basmajian, he2018basmajian,   he2018prime, jaipong2023dilogarithm,   parlier2020geodesic,vlamis2015moments} and the references
therein. Notably, Bridgeman-Kahn introduced another fundamental identity, detailed in \cite{bridgeman2011orthospectra,bridgeman2010hyperbolic}, linking the ortho-length spectrum on $M^n$ to the volume of the unit tangent bundle of $M^n$. In dimension two, the identities formulated by Basmajian and Bridgeman can be expressed respectively as follows:
\begin{equation}\label{equ:Bas&Bri}
    \ell(\partial{M^2})=\sum_{\eta}2\log(\coth(\ell(\eta)/2)), \,\,\,\,\,\,\,\,\,\,\,\,\,\,\,-\frac{\pi^2}{2} \chi(M^2)=\sum_{\eta}\mathcal{L}\left(1/{\cosh^2(\ell(\eta)/2)}\right)
\end{equation} 
where both sums run over the set of orthogeodesics on the surface $M^2$, and $\mathcal{L}$ is the Rogers dilogarithm function, see \cite{kirillov1995dilogarithm,zagier2007dilogarithm}. Specifically, Bridgeman's identity on certain special surfaces leads to the classical dilogarithm identities and infinitely many new ones, as shown in \cite{bridgeman2021dilogarithm,jaipong2023dilogarithm}. Zagier noted in \cite{zagier2007dilogarithm} that the dilogarithm appears not only in hyperbolic geometry but also in algebraic K-theory and mathematical physics, particularly in conformal field theory. The relations involving the dilogarithm continue to attract considerable interest in the K-theory and cluster algebra communities.

Influenced by these results, our initial goal in studying the set of orthogeodesics was to precisely describe the dilogarithm identities derived from Bridgeman's identity on pairs of pants. This journey leads to a combinatorial structure on the set of oriented orthogeodesics and identity relations, which leads us to a combinatorial proof of Basmajian's identity by using the approach in Bowditch's paper, see \cite{bowditch1996proof,  bowditch1998markoff}. Bowditch's method was originally used to give a combinatorial proof of McShane's identity, see \cite{mcshane1998simple}, then, later on, was applied to different contexts to obtain many other descendants and generalizations, see \cite{hu2014new,tan2006necessary,tan2008generalized}. 
Furthermore, In \cite{labourie2018probabilistic}, Labourie and Tan extended Bowditch's idea to provide a planar tree coding of oriented simple orthogeodesics on hyperbolic surfaces, along with a probabilistic explanation of McShane's identity for higher genus surfaces. They created this coding by performing special flips on the space of ideal triangulations of the surface. When an ideal triangulation is lifted to the universal cover of the surface, their method performs flips for all lifts of a simple orthogeodesic at each step.

To achieve coding of oriented orthogeodesics, we modify their algorithm to flip only one lift of a simple orthogeodesic at each step. Additionally, in Section \ref{coherentsubset}, we extend this method to construct a coding applicable to any suitable subset of oriented orthogeodesics starting from a boundary component. This construction could be useful for studying coherent markings introduced by Parlier in \cite{parlier2020geodesic} and prime orthogeodesics introduced by Basmajian, Parlier, and Tan in \cite{basmajian2020prime}.

In distance geometry, the Cayley-Menger determinant allows for the computation of the volume of an \( n \)-simplex in Euclidean space using the squares of the distances between all pairs of its vertices (see \cite{cayley1841theorem}). By applying hyperbolic trigonometric formulas for pentagons and hexagons, we derive several identity relations among orthogeodesics and connect them to a type of Cayley-Menger determinant and Penner's Ptolemy relation. 

It is important to note that Penner's Ptolemy relation involves \textit{$\lambda$-lengths} of orthogeodesics on surfaces with cusps, which has been shown to be significant in various contexts, such as Penner's theory of the moduli space of surfaces with punctures (see \cite{penner1987decorated, penner2012decorated}) and, more recently, cluster algebras (see, e.g., \cite{fomin2018cluster} and related works). Surprisingly, by extending Penner's Ptolemy relation to surfaces with non-empty totally geodesic boundaries, we find that the \textit{traces} (the hyperbolic cosines of the lengths) of three neighboring orthogeodesics \((X, Y, Z)\) with respect to given initial data \((a, b, c)\) satisfy a quadratic Diophantine equation as follows:
\[
\resizebox{\textwidth}{!}{
\fbox{$(a^2-1)X^2 + (b^2-1)Y^2 + (c^2-1)Z^2 - 2(ab+c)XY - 2(bc+a)YZ - 2(ca+b)XZ = a^2 + b^2 + c^2 + 2abc - 1$}
}
\]
For hyperbolic surfaces with at least one boundary component, we introduce a generalization of Penner's Ptolemy relation, termed the \textit{mixed Ptolemy relation}. This relation provides us with similar but distinct Diophantine equations (see Lemma \ref{mix}). Utilizing these equations and the combinatorial structure of orthogeodesics, we propose a recursive method for computing the trace of orthogeodesics on the surface.

In particular, we found a new type of root flipping, termed \textit{inter-root flipping}. This type of root flipping transitions between roots in different quadratic Diophantine equations, distinguishing it from Vieta root flipping (a.k.a. Vieta jumping). By using these Diophantine equations and the inter-root flipping, we establish the existence of \textit{ortho-integral surfaces}, where each orthogeodesic trace is an integer. Examples of these surfaces and their corresponding quadratic Diophantine equations are provided in Table \ref{table:2}. This intriguing phenomenon shares many similarities with integral Apollonian circle packings, which have attracted significant interest due to their connections with thin groups and the local-to-global principle for quadratic forms (see e.g. \cite{bourgain2014local, fuchs2019local, haag2023local} and the references therein).

The construction of the Apollonian gasket begins with three circles \( C_1 \), \( C_2 \), and \( C_3 \), each tangent to the other two but without a single point of triple tangency. These circles can vary in size, with the possibility of two being inside the third, or all three being outside each other. Apollonius discovered two additional circles, \( C_4 \) and \( C_5 \), tangent to all three original circles - these are known as Apollonian circles. For four mutually tangent circles with curvatures \( k_i \) for \( i = 1, \ldots, 4 \), Descartes' theorem states:
\[
\fbox{$(k_1 + k_2 + k_3 + k_4)^2 = 2 (k_1^2 + k_2^2 + k_3^2 + k_4^2)$}
\]

An Apollonian circle packing exhibits the property where the sum of the areas of all circles within the largest circle equals the area of the largest circle itself. Similarly, the set of orthogeodesics satisfies Basmajian's and Bridgeman-Kahn's identities as mentioned earlier. Moreover, each term in Basmajian's identity is twice the hyperbolic area of an \((n-1)\)-ball on \(\partial M^n\) corresponding to each orthogeodesic. These \((n-1)\)-balls are referred to as \textit{leopard spots} by Thurston and \textit{chimney bases} by Calegari (see \cite{calegari2010chimneys}).

Table \ref{table:1} outlines some common features between the Apollonian circle packing and the set of orthogeodesics. It is worth mentioning that there is a similar correspondence with the set of simple geodesics on a once-punctured torus: $\lambda$-length of simple orthogeodesics (or the trace of simple closed geodesics), McShane's term, Markov's equation, and Vieta jumping. All of these problems fall within a common framework that involves a set with a combinatorial structure, Diophantine equations, and a type of root flipping. More generally, this framework involves a family of algebraic varieties and a method for jumping between integer points on them. Thus, it is both natural and intriguing to explore various number-theoretic questions related to the set of orthogeodesics on ortho-integral hyperbolic manifolds in two and higher dimensions.

\begin{table}[ht]
\centering
\begin{tabularx}{\textwidth}{|p{7cm}|X|}
  \hline

\cellcolor{gray!30} \textbf{Apollonian gasket} & \cellcolor{gray!30} \textbf{The set of orthogeodesics}  \\
  \hline
  The curvature of circles & Trace of orthogeodesics\\
  \hline
  Area of circles & Basmajian's or Bridgeman-Kahn's term \\
  \hline
  Descartes's equation & A collection of quadratic Diophantine equations \\
  \hline
  Vieta root flipping  & Inter-root flipping \\
  \hline
\end{tabularx}
\caption{Apollonian circle packing versus the set of orthogeodesics.}
\label{table:1}
\end{table}

\textbf{Main results.} In this paper, we will focus solely on dimension two. Let $S$ be an orientable hyperbolic surface with non-empty totally geodesic boundary. Recall that $S$ is \textit{ortho-integral} if the trace (the hyperbolic cosine of the length) of each orthogeodesic on the surface is an integer. An \textit{orthobasis}, denoted by $\mathcal{B}$, on $S$ is a set of pairwise disjoint simple orthogeodesics that decompose the surface into hexagons. As a slight abuse of notation, we identify an orthobasis with the set of traces of orthogeodesics in the orthobasis. Our first result below provides a list of ortho-integral pairs of pants and one-holed tori.
\begin{thm}\label{thm:orthointegralsurface} (i) A pair of pants is ortho-integral if it admits an orthobasis $(a,b,c)$ belonging to the following list:  
$$(2,2,2),(3,3,3),(2,2,5),(3,3,7),(5,5,11).$$
(ii) A one-holed torus is ortho-integral if it admits an orthobasis $(a,b,c)$ belonging to the following list:   
$$(2, 2, 2),(2, 2, 3),(2, 2, 5),(2, 3, 6),(2, 4, 4),(2, 4, 7),(2, 5, 8),(2, 7, 10),(2, 13, 16),$$
$$(3, 3, 3),(3, 3, 7),(3, 5, 5),(3, 5, 9),(3, 9, 13),(3, 17, 21),(4, 4, 5),(4, 5, 10),(4, 6, 6),(4, 11, 16),$$
$$(5, 5, 11),(5, 7, 13),(5, 13, 19),(6, 6, 9),(6, 36, 64),(7, 9, 9),(7, 17, 25),(8, 13, 22),(9, 11, 21),$$
$$(10, 10, 17),(10, 12, 12),(11, 49, 61),(13, 29, 43),(17, 19, 37),(19, 21, 21).$$
\end{thm}

Note that each pair of pants has at most 4 distinct orthobases. We refer to Figure \ref{fig:integralothopants} for the set of pairs of pants satisfying condition $(i)$ in Theorem \ref{thm:orthointegralsurface}. There are seven such pairs of pants, including additional orthobases $(2,2,17)$ for the pair of pants with orthobasis $(2,2,2)$, and $(3,3,19)$ for the pair of pants with orthobasis $(3,3,3)$. In contrast, each one-holed torus has infinitely many distinct orthobases.

\begin{figure}[h]

\centering
\begin{tikzpicture}[scale=0.45] 
  \foreach \x/\labelA/\labelB/\labelC in {0/2/2/2, 1/2/2/17, 2/3/3/3, 3/3/3/19, 4/2/2/5, 5/3/3/7, 6/5/5/11} {
    \begin{scope}[shift={(\x*5,0)}] 
      \draw (0,0) ellipse (.5 and .1);
      \draw (-1,-2) ellipse (.5 and .1);
      \draw (1,-2) ellipse (.5 and .1);
      
      \draw (-.5,0) to[out=-90,in=90] node[midway, left] {\labelA} (-1.5,-2);
      \draw (.5,0) to[out=-90,in=90] node[midway, right] {\labelB} (1.5,-2);
      \draw (-.5,-2) to[out=90,in=90] node[midway, below] {\labelC} (.5,-2);
    \end{scope}
  }
\end{tikzpicture}
\captionsetup{justification=centering}
\caption{The set of all ortho-integral pairs of pants that admit an orthobasis $(a,b,c)\in\{(2,2,2),(3,3,3),(2,2,5),(3,3,7),(5,5,11)\}.$}
\label{fig:integralothopants}
\end{figure}

Two oriented orthogeodesics on \( S \) are considered (Farey) \textit{neighbors} (with respect to an orthobasis \(\mathcal{B}\)) if they start from the same boundary component of \( S \) and, together with an element of \(\mathcal{B}\) and some segments on \(\partial S\), form an immersed hexagon on \( S \) (see Definition \ref{dfn:neighboringorthogeodesics} for more details).
The main tool in proving Theorem \ref{thm:orthointegralsurface} is the quadratic Diophantine equation
\begin{equation}\label{equ:Ptolemygeodesics}
    (a^2-1)X^2 + (b^2-1)Y^2 + (c^2-1)Z^2 - 2(ab+c)XY - 2(bc+a)YZ - 2(ca+b)XZ = a^2 + b^2 + c^2 + 2abc - 1
\end{equation}
 for three neighboring orthogeodesics with traces $(X,Y,Z)$ and the so-called inter-root flipping. When the surface is a pair of pants with an orthobasis \((a, b, c)\), the inter-root flipping corresponds to Vieta jumping between roots of the quadratic Diophantine equation \ref{equ:Ptolemygeodesics}. However, it is important to emphasize that when the surface differs from a pair of pants, inter-root flipping does not equate to Vieta jumping. An example illustrating inter-root flipping for one-holed tori is provided in the next result (Theorem \ref{thm:panttorusjumpintro}).

  The following theorem can be generalized to any hyperbolic surface with non-empty boundary. However, for simplicity, we will only state it in this paper for pairs of pants and one-holed tori.

\begin{thm}\label{thm:panttorusjumpintro} We denote a pair of pants with an orthobasis \((a,b,c)\) by \( P(a,b,c) \) and a one-holed torus with an orthobasis \((a,b,c)\) by \( T(a,b,c) \), then
\bi
\item[(i)] If $\textbf{v}=(X,Y,Z)^T$ is a vector of traces of three pairwise neighboring orthogeodesics on $P(a,b,c)$, then all of the traces of orthogeodesics starting from the same boundary component with $X,Y,Z$ on $P(a,b,c)$ are given by the coordinates of vectors in the orbit $G_{P(a,b,c)}\cdot \textbf{v}$, where $G_{P(a,b,c)}$ is a group
generated by
$$f_X=\begin{bmatrix}
    -1 & \frac{2(ab+c)}{a^2-1}& \frac{2(ac+b)}{a^2-1}\\
    0 & 1& 0\\
    0&0&1
\end{bmatrix},f_Y=\begin{bmatrix}
    1 & 0& 0\\
    \frac{2(ba+c)}{b^2-1} & -1& \frac{2(bc+a)}{b^2-1}\\
    0&0&1
\end{bmatrix},
f_Z=\begin{bmatrix}
    1 & 0 & 0\\
    0 & 1& 0\\
    \frac{2(ca+b)}{c^2-1}&\frac{2(cb+a)}{c^2-1}& -1
\end{bmatrix}.$$
\item[(ii)] If $\textbf{v}=(X,Y,Z)^T$ is a vector of traces of three pairwise neighboring orthogeodesics on $T(a,b,c)$, then all of the traces of orthogeodesics on $T(a,b,c)$ are given by the coordinates of vectors in the orbit $G_{T(a,b,c)}\cdot \textbf{v}$, where $G_{T(a,b,c)}$ is a group generated by
$$g_X=\begin{bmatrix}
    -1 & \frac{b+c}{a-1}& \frac{b+c}{a-1}\\
    0 & 0& 1\\
    0&1&0
\end{bmatrix},g_Y=\begin{bmatrix}
    0 & 0& 1\\
    \frac{a+c}{b-1} & -1& \frac{a+c}{b-1}\\
    1&0&0
\end{bmatrix},
g_Z=\begin{bmatrix}
    0 & 1 & 0\\
    1 & 0& 0\\
    \frac{a+b}{c-1}&\frac{a+b}{c-1}& -1
\end{bmatrix}.$$
\ei
\end{thm}

In general, inter-root flipping is a transition between roots in various quadratic Diophantine equations associated with a given orthobasis on a hyperbolic surface with a non-empty boundary. For instance, on a generic four-holed sphere, one can select an orthobasis that corresponds to at least two distinct Diophantine equations, with inter-root flipping transitioning between the roots of these equations.

The Diophantine equation \ref{equ:Ptolemygeodesics} and the inter-root flipping are among several other identity relations concerning the distances between geodesics and horocycles on the hyperbolic plane that we will introduce in Section \ref{appendix}. 
These relations are connected to a type of Cayley-Menger determinant and will be presented in the next theorem.

Let \( U \) and \( V \) be two arbitrarily disjoint geodesics or horocycles in \(\mathbb{H}\). Denote \(\kappa_U\) and \(\kappa_V\) as their respect geodesic curvatures. For convenience, we define $\overline{UV}$ as: $$\overline{UV} :=\frac{e^{\frac{1}{2}d_{\mathbb{H}}(U,V)}+(1-\kappa_U)(1-\kappa_V)e^{-\frac{1}{2}d_{\mathbb{H}}(U,V)}}{2},$$ where $d_{\mathbb{H}}(U,V)$ is the hyperbolic distance between $U$ and $V$. We obtain the following relation in the form of the Cayley-Menger determinant:
\begin{thm} \label{Thm: Determinant}Let $\{A_1, A_2,A_3, A_4\}$ be the set of four disjoint geodesics/horocycles in $\mathbb{H}$, each of them divides $\mathbb{H}$ into two domains such that the other three lie in the same domain. Then

\begin{equation}\label{equ:Cayley-Menger}
\det\begin{bmatrix}
2& 1-\kappa_{A_1} & 1-\kappa_{A_2}&1-\kappa_{A_3}&1-\kappa_{A_{4}}\\
1-\kappa_{A_1} & 0 & \overline{A_1A_2}^2 & \overline{A_1A_3}^2 &\overline{A_1A_4}^2\\
1-\kappa_{A_2}&\overline{A_2A_1}^2&0&\overline{A_2A_3}^2 &\overline{A_2A_4}^2\\
1-\kappa_{A_3}&\overline{A_3A_1}^2&\overline{A_3A_2}^2&0&\overline{A_3A_{4}}^2\\
1-\kappa_{A_{4}}&\overline{A_{4}A_1}^2&\overline{A_{4}A_2}^2 & \overline{A_{4}A_{3}}^2&0
\end{bmatrix}=0.    
\end{equation}
\end{thm}
Note that in the special case where \(\kappa_{A_i} = 1\) for all \(i \in \{1, 2, 3, 4\}\), Equation \ref{equ:Cayley-Menger} corresponds to Penner's Ptolemy relation \cite{penner1987decorated}. When \(\kappa_{A_i} = 0\) for all \(i \in \{1, 2, 3, 4\}\), Equation \ref{equ:Cayley-Menger} simplifies to the Ptolemy relation for geodesics (see Equation \ref{equ:Ptolemygeodesics}). We believe that this theorem could potentially be extended to hyperbolic spaces of higher dimensions.

Our next result concerns a combinatorial structure of the set of orthogeodesics, which underlies all the aforementioned results. Furthermore, this structure also leads to a discretization of Basmajian's identity. 

Let $S$ be an orientable hyperbolic surface with boundary $\partial S$ consisting of simple closed geodesics. Let $\eta_{\frac{0}{1}}$ and $\eta_{\frac{1}{1}}$ be two oriented orthogeodesics starting from a simple closed geodesic $\alpha$ at the boundary of $S$. The starting points of these two orthogeodesics divide $\alpha$ into two open subsegments, namely $\alpha_1$ and $\alpha_2$. Suppose that $\alpha_1 \neq \varnothing$. Denote by $\mathcal{O}_{\alpha_1}$ the set of oriented orthogeodesics starting from $\alpha_1$. 

Let $T_1$ be a planar rooted trivalent tree whose first vertex is of valence 1, and all other vertices are of valence 3. Let $E(T_1)$ be the set of edges of $T_1$. Each edge of the tree has two sides associated with two neighboring complementary regions of the tree (see Figure \ref{panttree} for an illustration). Let $\Omega(T_1)$ be the set of complementary regions of the tree. 

In Section \ref{section 1}, we show that each orthobasis on $S$ induces an order-preserving bijection between $\mathcal{O}_{\alpha_1} \sqcup \{\eta_{\frac{0}{1}}\}\sqcup \{\eta_{\frac{1}{1}}\}$ and $\Omega(T_1)$. This bijection allows us to state Basmajian's identity in a purely combinatorial manner and provides a proof of the identity independent of Basmajian's original geometric proof.

 We define a weight map $\Phi$ on the set of edges and complementary regions of the tree $T_1$. This map satisfies conditions coming from the fact that we want them to correspond to the cosh length function $\cosh(\ell(.))$. In particular, the map $\Phi: E(T_1) \sqcup \Omega(T_1) \to (1,\infty)$ has a harmonic relation at any vertex except at the root of the tree (see Equation \ref{harmonic}). Basmajian's identity for the tree $T_1$ can be expressed in the following form:
\begin{thm}\label{Basmaj}(Basmajian's identity for $T_1$)  If  $\sup\{\Phi(x) | x \in E(T_1) \}<\infty$, then
$$\log\left(\frac{x_0+Y_0Z_0+\sqrt{x_0^2+Y_0^2+Z_0^2+2x_0Y_0Z_0-1}}{(Y_0-1)(Z_0-1)}\right)=\sum_{X\in \Omega(T_1)}\log\left(\frac{X+1}{X-1}\right)$$
where $(x_0,Y_0,Z_0)$ is the initial edge region triple at the root of the tree. Note that $x_0,X,Y_0,Z_0$ are the abbreviations of $\Phi(x_0),\Phi(X),\Phi(Y_0),\Phi(Z_0)$ respectively.
\end{thm}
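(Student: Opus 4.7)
The plan is a telescoping argument along $T_1$, based on a one-step identity derived from the harmonic relation. First note the factorisation $x^2+Y^2+Z^2+2xYZ-1=(YZ+x)^2-(Y^2-1)(Z^2-1)$, which allows one to rewrite
$$F(x,Y,Z):=\log\!\left(\tfrac{x+YZ+\sqrt{x^2+Y^2+Z^2+2xYZ-1}}{(Y-1)(Z-1)}\right)=\tfrac{1}{2}\log\tfrac{Y+1}{Y-1}+\tfrac{1}{2}\log\tfrac{Z+1}{Z-1}+G(x,Y,Z),$$
where $G(x,Y,Z):=\arccosh\!\left(\tfrac{YZ+x}{\sqrt{(Y^2-1)(Z^2-1)}}\right)$. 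In the geometric interpretation of Theorem 1.1, $G(x_0,Y_0,Z_0)$ is the hyperbolic length of $\alpha_1$, while $\tfrac{1}{2}\log\tfrac{R+1}{R-1}=\log\coth(\ell_R/2)$ is half the Basmajian shadow of the orthogeodesic coded by the region $R$. The theorem is thus equivalent to the Basmajian-type identity $G(x_0,Y_0,Z_0)=\tfrac{1}{2}\log\tfrac{Y_0+1}{Y_0-1}+\tfrac{1}{2}\log\tfrac{Z_0+1}{Z_0-1}+\sum_{X\text{ interior}}\log\tfrac{X+1}{X-1}$.

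The key local step is to prove that $G$ is additive across the tree: if $(x,Y,Z)$ is the triple at an edge terminating at a non-root vertex and $(x_L,Y,X),\,(x_R,X,Z)$ are the two children triples (with $X$ the new region introduced at the vertex), then the harmonic relation forces $G(x,Y,Z)=G(x_L,Y,X)+G(x_R,X,Z)$. Via the $\arccosh$-addition formula this reduces to a single polynomial identity in $(x,x_L,x_R,X,Y,Z)$ which is exactly the algebraic content of the harmonic relation (essentially the right-angled hexagon identity in its algebraic disguise); its direct verification will be the main computational step of the proof.

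Iterating the additivity yields $G(x_0,Y_0,Z_0)=\sum_{e\text{ at depth }n+1}G(x_e,Y_e,Z_e)$ for every $n$. Introducing the \emph{uncovered length} $U_e:=G_e-\tfrac{1}{2}\log\tfrac{Y_e+1}{Y_e-1}-\tfrac{1}{2}\log\tfrac{Z_e+1}{Z_e-1}$, a direct computation gives the recursion $U_e=U_L+U_R+\log\tfrac{X+1}{X-1}$; since each interior region $X$ is the "new" region at exactly one vertex and appears on exactly two edges of each deeper level along its boundary rays, this telescopes to
$$\sum_{e\text{ at depth }n+1}U_e \;=\; U(x_0,Y_0,Z_0) \;-\; \sum_{X\text{ interior at depth}\le n}\log\tfrac{X+1}{X-1}.$$
Hence the theorem follows once I show $\sum_{e}U_e\to 0$ as $n\to\infty$. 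Using the asymptotic $G(x,Y,Z)\to\tfrac{1}{2}\log\tfrac{Y+1}{Y-1}$ as $Z\to\infty$ (with $x,Y$ controlled), each $U_e$ vanishes in the limit; the hypothesis $\sup_{x\in E(T_1)}\Phi(x)<\infty$ combined with the harmonic relation forces the newly introduced region weights along every branch to diverge, providing the quantitative control needed to dominate the exponentially growing number ($2^{n+1}$) of terms in the sum. The principal obstacle is the algebraic additivity of the previous paragraph, as unwinding the nested square roots and invoking the harmonic relation cleanly is delicate; a secondary technical point is extracting sharp enough decay estimates for $U_e$ from the edge-weight bound to dominate the summation at each level.
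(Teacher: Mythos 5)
Your skeleton is the same as the paper's (it is Bowditch's argument): rewrite the left-hand side, using $(x_0+Y_0Z_0)^2-(Y_0^2-1)(Z_0^2-1)=x_0^2+Y_0^2+Z_0^2+2x_0Y_0Z_0-1$, as $h(Y_0)+h(Z_0)+f(x_0,Y_0,Z_0)$ with $h(X)=\tfrac12\log\tfrac{X+1}{X-1}$ and $f(x,Y,Z)=\arccosh\left(\tfrac{x+YZ}{\sqrt{(Y^2-1)(Z^2-1)}}\right)$; propagate $f$ down the tree (the paper's Green formula $\sum_{x\in C_n}f(x,Y,Z)=f(x_0,Y_0,Z_0)$); account for region multiplicities (your telescoping of $U_e$ is exactly this bookkeeping); and show the level sums of the defect tend to zero. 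One remark before the main issue: the additivity $f(x,Y,Z)=f(y,X,Z)+f(z,X,Y)$ that you single out as ``the main computational step'' is not something to verify here — it is literally the harmonic relation that $\Phi$ is assumed to satisfy, stated in the paper in exactly this $\arccosh$ form (motivated geometrically by Lemma \ref{harmonic}). If you were to assume only the quadratic vertex relation of Corollary \ref{triangle}, additivity could fail by a choice of sign, so the $\arccosh$ form must be taken as the hypothesis; in no case is there a nested-radical identity to unwind.

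The genuine gap is in your last step, $\sum_{e\in C_{n+1}}U_e\to 0$, which is the only place the hypothesis $\mu:=\sup\{\Phi(x)\,|\,x\in E(T_1)\}<\infty$ can enter, and your proposal does not supply the needed estimate. Pointwise vanishing of each $U_e$ against $2^{n}$ summands proves nothing, and the asserted mechanism — that the sup bound on edge weights ``forces the region weights to diverge along every branch'' fast enough to dominate the exponential count — is neither derived from the hypothesis nor sufficient without a uniform rate. The paper's actual argument has two quantitative ingredients you are missing: first, $f(x,Y,Z)\ge h(Y)+h(Z)$, i.e. $U_e\ge 0$, which combined with your telescoping shows the partial sums of $\sum_X 2h(X)$ are bounded by $U_0$, hence the series converges; second, the calculus inequality $f(x,Y,Z)\le h(Y)+x\,h(Z)\le h(Y)+\mu\,h(Z)$ (proved by showing $F(x):=h(Y)+x\,h(Z)-f(x,Y,Z)$ satisfies $F''>0$, $F'(1)>0$, $F(1)=0$), applied with $Z$ taken to be the region created at the upper vertex of the edge $e$, which gives $U_e\le(\mu-1)\,h(Z_e)$. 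Summing over a level, $\sum_{e\in C_{n+1}}U_e\le(\mu-1)\sum 2h(X)$ where the sum runs over the regions created at that level, and this tends to $0$ as the tail of the convergent series from the first ingredient. What you defer as ``a secondary technical point'' (sharp decay of $U_e$ from the edge-weight bound) is in fact the heart of the proof, so as written the argument is incomplete precisely where the hypothesis is used.
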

We emphasize that the index set of the above identity is derived from the complementary regions of a rooted trivalent tree. The condition $\sup\{\Phi(x) \mid x \in E(T_1)\} < \infty$ always holds when $S$ is a surface of finite type.

Lastly, we investigate certain types of e-orthoshapes (see Definition \ref{r+}). Generally, an e-orthoshape is a set of finite orthogeodesics that satisfy specific conditions for any hyperbolic structure on a surface. In this paper, we focus on e-orthoshapes related to length-equivalent orthogeodesics. Let \( S \) be an arbitrary hyperbolic surface with totally geodesic boundaries. We show that:

\begin{thm}
The involution reflections on an immersed pair of pants yield infinitely many e-ortho-isosceles-trapezoids, e-orthorectangles, and e-orthokites on \( S \). However, there are no e-orthosquares on \( S \).
\end{thm}
The existence of these types of e-orthoshapes, especially e-orthorectangles, on surfaces simplifies significantly the inter-root flipping for some subsets of triples of orthogeodesics. In particular, it explains why there exist quadruples of orthogeodesics $(X, Y, Z, T)$ on surfaces that always satisfy the following Pythagorean-type relation $$\cosh^2\left(\frac{\ell(X)}{2}\right) + \cosh^2\left(\frac{\ell(T)}{2}\right) = \cosh^2\left(\frac{\ell(Y)}{2}\right) + \cosh^2\left(\frac{\ell(Z)}{2}\right)$$ no matter what hyperbolic structure is endowed on the surface. One can ask whether all of these e-orthoshapes arise from the reflection involutions on immersed pairs of pants. This question is intimately related to the length-equivalent problem studied in \cite{anderson2003variations} and \cite{leininger2003equivalent}.

\textbf{Structure.} Sections \ref{Pre} and \ref{Notations} contain necessary notations used throughout this paper. Section \ref{section 1} describes a combinatorial structure on a subset of oriented orthogeodesics, related identity relations.  Section \ref{applications} will be about some applications: Ortho-integral surfaces, some infinitely (dilogarithm) identities, and a combinatorial proof of Basmajian's identity. Section \ref{reflection} provides a construction of some types of e-orthoshapes using reflection involutions. Section \ref{appendix} introduces several identity relations of geodesics and horocycles on the hyperbolic plane. Section \ref{questionremarks} will be about some remarks and further questions.

\textbf{Acknowledgments.} I would like to express my deep gratitude to my advisor, Hugo Parlier, for his invaluable discussions and constant encouragement. I am especially thankful to Ser Peow Tan for carefully reading the initial manuscripts, providing detailed comments, and offering useful suggestions that greatly improved this paper. I also thank Greg McShane for pointing out an unclear argument in the proof of Theorem \ref{Thm: integral surfaces} and for his useful suggestions. Lastly, I extend my thanks to Binbin Xu for generously dedicating his time to answer many of my questions and to David Fisac Camara for engaging in helpful discussions.

\section{Orthobasis, (e-)orthoshapes, and length equivalent orthogeodesics}\label{Pre}

Let $\Sigma$ be a topological surface of negative Euler characteristic with $n$ punctures, $n\geq 1$. Denote by $\partial{\Sigma}$ the set of punctures on $\Sigma$. The Teichm\"{u}ller space $\mathcal{T}(\Sigma)$ is denoted as the set of marked hyperbolic structures on $\Sigma$ up to isotopy such that there are $b$ geodesic boundaries and $c$ cusps where $b+c=n$.

Let $\eta$ be a homotopy class of arcs relative to $\partial{\Sigma}$. The geometric realization of $\eta$ with respect to an $S\in \mathcal{T}(\Sigma)$ is an \textbf{orthogeodesic} which is a geodesic arc perpendicular to $\partial{S}$ at both ends, with a convention that any geodesic with an endpoint at a cusp is said to be perpendicular to that cusp. Note that an orthogeodesic is of infinite length if one of its endpoints is at a cusp of $S$.

Now we introduce the notions of orthotriangle, orthobasis, standard orthobasis, neighboring orthogeodesics, and orthotriangulation.
\begin{dfn}\label{orthotriangle} An \textbf{orthotriangle} on $S \in \mathcal{T}(\Sigma)$ is an immersed polygon with its boundary consisting of three orthogeodesics and at most three geodesic subsegments of $\partial{S}$ (i.e. it is an $n$-gon with $(6-n)$ ideal vertices, where $n\in\{3,4,5,6\}$). \end{dfn}
Note that with this definition, orthotriangles are related to the so-called generalized triangles in Buser's book \cite{buser2010geometry}.

\begin{dfn}\label{orthobasis} An \textbf{orthobasis} on $S \in \mathcal{T}(\Sigma)$ is a collection of pairwise disjoint simple orthogeodesics which decomposes $S$ into a collection of embedded orthotriangles. The set of all the orthotriangles is called an \textbf{orthotriangulation}.
\end{dfn}
\begin{dfn}\label{standard}
An orthotriangulation $\Delta$ is \textbf{standard} if, for any orthotriangle in $\Delta$, the three orthogeodesics at its boundary are pairwise distinct.
\end{dfn}

\begin{dfn}\label{dfn:neighboringorthogeodesics}
Let \( S \in \mathcal{T}(\Sigma)\) with an orthobasis \(\mathcal{B}\). Two oriented orthogeodesics $\eta_1$ and $\eta_2$ on \( S \) are \textbf{(Farey) neighbors} (with respect to \(\mathcal{B}\)) if they start from the same boundary component and, together with an element $\eta \in \mathcal{B}$, form an orthotriangle on \( S \).
\end{dfn}

Now we introduce the notions of truncated length of orthogeodesics, orthoquadrilateral, ortho-isosceles-trapezoid, orthorectangle, orthokite, orthoparallelogram on $\Sigma$. 

Let $S \in \mathcal{T}(\Sigma)$. A \textbf{truncated surface} on $S$, say $\hat{S}$, is a hyperbolic surface with its boundary $\partial{\hat{S}}$ consisting of simple closed geodesics and/or simple closed horocycles obtained from cutting off all the cusp regions of $S$. Denote by $\hat{S}(2)$ the \textbf{natural truncated surface} of $S$ where all removing cusp regions are of the same area $2$. Note that $\hat{S}=\hat{S}(2)=S$ if there are no cusps on $S$. We define $\hat{\mathcal{T}}(\Sigma)$ to be the set of all pairs $(S,\hat{S})$ with $S \in \mathcal{T}(\Sigma)$. Similarly, a \textbf{natural concave core} (or a concave core of grade 1 as defined in \cite{basmajian2020prime}), denoted by $S^\star{}$, is the surface obtained by cutting off all natural collars of cusps and simple closed geodesics on $\partial{S}$ of $S$. Note that a \textbf{natural collar} of a boundary component $\beta \in \partial{S}$ is
\vspace{-3mm}
\begin{itemize}
\item a cusp region of area $2$ surrounding $\beta$ if $\beta$ is a cusp,
\item a set of points at distance less than $\arcsinh\left(1/\sinh(\ell(\beta)/2)\right)$ from $\beta$ if $\beta$ is a simple closed geodesic.
\end{itemize}
\vspace{-3mm}

Let $\bar{S}$ be either $\hat{S}$ or $S^\star{}$ defined as above. Each orthogeodesic $\eta$ on $S$ will be truncated by $\partial{\bar{S}}$ and associated to a truncated orthogeodesic which is a subarc of $\eta$ perpendicular to $\partial \bar{S}$ at both endpoints. Denote by $\ell_{\hat{S}}(\eta)$ and $\ell_{S^\star{}}(\eta)$  the \textbf{truncated lengths} of $\eta$ with respect to $\hat{S}$ and $S^\star{}$ on $S$ and let
$$\ell_{\bar{S}}(\eta):=\left\{ \begin{array}{rcl}
\ell_{\hat{S}}(\eta) & \mbox{if}
& \bar{S}=\hat{S} \\ \ell_{S^\star{}}(\eta)
& \mbox{if} & \bar{S}=S^\star{}
\end{array}\right.
$$

\begin{dfn}\label{e-shapes}
 An \textbf{orthoquadrilateral} \( \mathbb{O} := XYZT \) on \( S \in \mathcal{T}(\Sigma) \) is an immersed polygon with its boundary consisting of four orthogeodesics, namely \( (XY, YZ, ZT, TX) \) in cyclic order, and at most four geodesic segments of \( \partial{S} \) (i.e., it is an \( n \)-gon with \( (8 - n) \) ideal vertices, where \( n \in \{4, 5, 6, 7, 8\} \)). Let \( XZ \) and \( YT \) be diagonal orthogeodesics of \( \mathbb{O} \). Let \( \bar{S} \) be either the natural concave core \( S^\star \) or a truncated surface \( \hat{S} \) on \( S \). Then with respect to \( \bar{S} \),

\vspace{-3mm}
\begin{itemize}
\item{}$\mathbb{O}$ is an ortho-isosceles-trapezoid if $\ell_{\bar{S}}(XT)=\ell_{\bar{S}}(YZ)$ and $\ell_{\bar{S}}(XZ)=\ell_{\bar{S}}(YT)$,
\item{}$\mathbb{O}$ is an orthorectangle if $\ell_{\bar{S}}(XT)=\ell_{\bar{S}}(YZ)$, $\ell_{\bar{S}}(XY)=\ell_{\bar{S}}(ZT)$ and $\ell_{\bar{S}}(XZ)=\ell_{\bar{S}}(YT)$,
\item{}$\mathbb{O}$ is an orthokite if $\ell_{\bar{S}}(XY)=\ell_{\bar{S}}(XT)$ and $\ell_{\bar{S}}(ZY)=\ell_{\bar{S}}(ZT)$,
\item{}$\mathbb{O}$ is an orthoparallelogram if $\ell_{\bar{S}}(XY)=\ell_{\bar{S}}(ZT)$ and $\ell_{\bar{S}}(XT)=\ell_{\bar{S}}(YZ)$,

\item{}$\mathbb{O}$ is a orthorhombus if $\ell_{\bar{S}}(XT)=\ell_{\bar{S}}(YZ)=\ell_{\bar{S}}(XY)=\ell_{\bar{S}}(ZT)$,

\item{}$\mathbb{O}$ is an orthosquare if $\ell_{\bar{S}}(XT)=\ell_{\bar{S}}(YZ)=\ell_{\bar{S}}(XY)=\ell_{\bar{S}}(ZT)$ and $\ell_{\bar{S}}(XZ)=\ell_{\bar{S}}(YT)$.

\end{itemize}
\end{dfn}

Finally, we introduce the notion of length equivalent orthogeodesics and e-orthoshapes. 

Two homotopy classes of arcs relative to $\partial{\Sigma}$ are length equivalent in $\mathcal{T}(\Sigma)$ if their orthogeodesics are of the same truncated length with respect to any pair $(S,\hat{S}) \in \hat{\mathcal{T}}(\Sigma)$.  One can show that if two orthogeodesics are length equivalent, then their endpoints are on the same pair of elements in $\partial{\Sigma}$. Thus, we will use the following definition for length equivalent orthogeodesics:
\begin{dfn}\label{dfn:lengthequivalentorthogeodesics}
    Two orthogeodesics, say $\eta_1$ and $\eta_2$, are \textbf{length equivalent} if $\ell_{S^\star{}}(\eta_1)=\ell_{S^\star{}}(\eta_2)$ for all $S\in \mathcal{T}(\Sigma)$.
\end{dfn}

There is an equivalent way to define length-equivalent orthogeodesics based on length-equivalent closed geodesics. Indeed, each orthogeodesic $\eta$ is roughly a seam between two boundary components, say $\beta_1$ and $\beta_2$, of a collection of immersed pairs of pants on $S$. Among these immersed pairs of pants, there is a unique maximal one, say $P^{*}$, which contains all the rest. One can associate $\eta$ to the closed geodesic, say $\beta_3$, at the remaining boundary component of $P^{*}$ (see the precise definition in \cite{basmajian2020prime} or \cite{bell2023counting}). Then, two orthogeodesics are \textbf{length equivalent} if their associated closed geodesics are of the same length for all $S \in \mathcal{T}(\Sigma)$. Here is the relation between $\ell_{S^\star{}}(\eta)$ and $\beta_1$, $\beta_2$ and $\beta_3$:
$$\ell_{S^\star{}}(\eta)=\arccosh\frac{a_1a_2+a_3}{\sqrt{(a_1^2-1)(a_2^2-1)}}-\arcsinh\frac{1}{\sqrt{a_1^2-1}}-\arcsinh\frac{1}{\sqrt{a_2^2-1}}$$
$$=\log\left(\frac{a_1a_2+a_3+\sqrt{a_1^2+a_2^2+a_3^2+2a_1a_2a_3-1}}{(a_1+1)(a_2+1)} \right),$$
in which $a_i$ denotes the half-trace of $\beta_i$, i.e. $\cosh(\ell(\beta_i)/2)$, for any $i\in\{1,2,3\}$. Observe that $e^{\ell_{S^\star{}}(\eta)}$ is a root of the following equation: $(a_1+1)(a_2+1)x^2-2(a_1a_2+a_3)x+(a_1-1)(a_2-1)=0.$


An \textbf{e-orthoshape} is a set of orthogeodesics satisfying some equality conditions on their truncated lengths which hold for any pair $(S,\hat{S}) \in \hat{\mathcal{T}}(\Sigma)$. By this definition, a pair of length-equivalent orthogeodesics is an example of an e-orthoshape. We are interested in some types of e-orthoshapes which closely related to length-equivalent orthogeodesics. 
\begin{dfn}\label{e-}
$\bb{O}$ is an \textbf{e-ortho-isosceles-trapezoid/rectangle/kite/parallelogram/rhombus/square} if it is an ortho-isosceles-trapezoid/rectangle/kite/parallelogram/rhombus/square with respect to any pair $(S,\hat{S}) \in \hat{\mathcal{T}}(\Sigma)$.
\end{dfn}
Due to the definition of length equivalent orthogeodesics (See Definition \ref{dfn:lengthequivalentorthogeodesics}), Definition \ref{e-} is equivalent to the following one: 
\begin{dfn}\label{r+}
$\bb{O}$ is an \textbf{e-orth-isosceles-trapezoid/rectangle/kite/parallelogram/rhombus/square} if it is an ortho-isosceles-trapezoid/rectangle/kite/parallelogram/rhombus/square with respect to $S^\star{}$ for any $S \in \mathcal{T}(\Sigma)$.
\end{dfn}

\section{Notation}\label{Notations} Since the following notations will be used throughout this paper, we put them in a separate section so readers can revisit them whenever they get confused. Let $X$ be the shortest geodesic arc from $A$ to $B$. 
\begin{enumerate}
\item If $A$ is a geodesic and $B$ is a geodesic/point in the hyperbolic plane, we use 

\begin{enumerate}
\item $X$ (or $AB$) for $\cosh(\ell_\mathbb{H}(X))$,  \label{3.1a}
\item{}$\overline{X}$ (or $\overline{AB}$) for $\cosh(\ell_\mathbb{H}(X)/2)$, \label{3.1b}
\item{}$\widetilde{X}$ (or $\widetilde{AB}$) for $\sinh(\ell_\mathbb{H}(X))$. \label{3.1c}
\end{enumerate}
\item If $A$ is a horocycle/geodesic/point and $B$ is a horocycle in the hyperbolic plane, we use

\begin{enumerate}
\item{}$X$ (or $AB$) for $\frac{1}{2}e^{d_\mathbb{H}(A,B)}$, \label{3.2a}
\item{}$\lambda (X)$ (or $\lambda (A,B)$) for $e^{\frac{1}{2}d_\mathbb{H}(A,B)}$ (Penner's lambda length), \label{3.2b}
\item{}$\overline{X}$ (or $\overline{AB}$) for $\frac{1}{2}e^{\frac{1}{2}d_\mathbb{H}(A,B)}$ (Half of Penner's lambda length). \label{3.2c}
\end{enumerate}
\item If $A$  and $B$ are two points in the hyperbolic plane, we use

\begin{enumerate}
\item{}$\overline{X}$ (or $\overline{AB}$) for $\sinh(\ell_\mathbb{H}(X)/2)$. \label{3.3a}
\end{enumerate}
\end{enumerate}

\section{Orthotree and identity relations}\label{section 1}
For simplicity, in this section, we present a combinatorial structure and identity relations of orthogeodesics on a hyperbolic surface with boundaries consisting of simple closed geodesics. With suitable choices of mixed Ptolemy relations in Lemma \ref{mix}, the ideas also apply to any arbitrary hyperbolic surface with boundaries that include cusps and simple closed geodesics.



\subsection{Orthotree}

Let $S$ be an orientable hyperbolic surface with boundary $\partial S$ consisting of simple closed geodesics. Let $\eta_{\frac{0}{1}}$ and $\eta_{\frac{1}{1}}$ be two oriented orthogeodesics starting from a simple closed geodesic $\alpha$ at the boundary of $S$. The starting points of these two orthogeodesics divide $\alpha$ into two open subsegments, namely $\alpha_1$ and $\alpha_2$. Suppose that $\alpha_1 \neq \varnothing$. Denote by $\mathcal{O}_{\alpha_1}$ the set of oriented orthogeodesics starting from $\alpha_1$. The orientation of $S$ gives an order on $\mathcal{O}_{\alpha_1}$ independent of the hyperbolic structure. In particular, the order on $\mathcal{O}_{\alpha_1}$  is defined by the order of the starting points of $\mathcal{O}_{\alpha_1}$ on $\alpha_1$. By that, if $\eta_x$, $\eta_y$ and $\eta_z$ are three elements in $\mathcal{O}_{\alpha_1}$ and the starting point of $\eta_y$ lies between $\eta_x$ and $\eta_z$ with respect to the subsegment $\alpha_1$, we say $\eta_y$ lies between $\eta_x$ and $\eta_z$.

\subsubsection{Labelling orthogeodesics by rational numbers}\label{Orsa}

Let $\mathcal{B}$ be an orthobasis on $S$. Suppose that $\eta_{\frac{0}{1}}, \eta_{\frac{1}{1}}$, and $\alpha_1$ defined above are three sides of an orthotriangle such that the third orthogeodesic in this orthotriangle is an element in $\mathcal{B}$. In other words, $\eta_{\frac{0}{1}}, \eta_{\frac{1}{1}}$ are neighbors with respect to $\mathcal{B}$ (see Definition \ref{dfn:neighboringorthogeodesics}). Then we have the following lemma:

\begin{lem}\label{lem:labeling}
    There is an order-preserving bijection between $\mathcal{O}_{\alpha_1} \sqcup \{\eta_{\frac{0}{1}} \} \sqcup \{\eta_{\frac{1}{1}}\}$ and $\mathbb{Q}\cap[0,1]$.
\end{lem}
\begin{proof}
We recall that a Farey pair $(\frac{p}{q},\frac{m}{n})$ is a pair of two reduced fractions $\frac{p}{q}$ and $\frac{m}{n}$ such that $\frac{0}{1}\leq \frac{p}{q} < \frac{m}{n} \leq \frac{1}{1}$ and $qm-pn=1$. Their Farey sum is defined as $\frac{p}{q} \oplus \frac{m}{n}:=\frac{p+m}{q+n}$. Thus $\frac{p}{q} < \frac{p}{q} \oplus \frac{m}{n} < \frac{m}{n}$. Furthermore, $(\frac{p}{q},\frac{p}{q} \oplus \frac{m}{n})$ and $(\frac{p}{q} \oplus \frac{m}{n},\frac{m}{n})$ become two other Farey pairs.

For any element $X\in \mathcal{O}_{\alpha_1}$, we denote by $N(X)$ the number of intersections of $X$ with the orthobasis $\mathcal{B}$. We define a map $\Psi_1$ from $\mathbb{Q}\cap[0,1]$ to $\mathcal{O}_{\alpha_1}\sqcup (\eta_{\frac{0}{1}}\sqcup \eta_{\frac{1}{1}})$ as follows:

\underline{{\it Step 1:}} Take an element $X$ in $\mathcal{O}_{\alpha_1}$ such that $N(X)$ is minimal. We name it by $\eta_{\frac{0}{1}\oplus \frac{1}{1}}$ or equivalently $\eta_{\frac{1}{2}}$. The starting point of this element divides $\alpha_1$ into two disjoint open subsegments. By abuse of notation, we denote these two subsegments by  $(\eta_{\frac{0}{1}},\eta_{\frac{1}{2}})$ and $(\eta_{\frac{1}{2}},\eta_{\frac{1}{1}})$.

\underline{{\it Step $k\geq 2$:}}  $\alpha_1$ is divided into $2^{k-1}$ disjoint open subsegments of the form $(\eta_{\frac{p_1}{q_1}},\eta_{\frac{p_2}{q_2}})$ where $(\frac{p_1}{q_1},\frac{p_2}{q_2})$ is a Farey pair. Denote by $\mathcal{O}_{\alpha_1}(\eta_{\frac{p_1}{q_1}},\eta_{\frac{p_2}{q_2}})$ the set of oriented orthogeodesics in $\mathcal{O}_{\alpha_1}$ starting from the subsegment $(\eta_{\frac{p_1}{q_1}},\eta_{\frac{p_2}{q_2}})$. Apply step 1 for each of these subsegments, one obtains $2^{k}$ new disjoint subsegments.

By this construction, we exhaust all elements in $\mathcal{O}_{\alpha_1}$. Thus, the above labeling gives an order-preserving bijection:
$$\mathring{\Psi}_1: \mathbb{Q}\cap(0,1) \to \mathcal{O}_{\alpha_1}.$$
This map can be extended naturally to an order-preserving bijection:
$$\Psi_1:\mathbb{Q}\cap[0,1] \to \mathcal{O}_{\alpha_1} \sqcup \{\eta_{\frac{0}{1}} \} \sqcup \{\eta_{\frac{1}{1}}\}.$$

\end{proof}

\subsubsection{Labelling complementary regions of a tree by rational numbers}
Let $T_1$ be a planar rooted trivalent tree whose first vertex is of valence 1, and all other vertices are of valence 3. We visualize it by embedding $T_1$ to the lower half-plane with the root located at the origin (see Figure \ref{panttree} for an example). Let $E(T_1)$ be the set of edges of $T_1$. Each edge of the tree has two sides associated with two distinct complementary regions of the tree. Let $\Omega(T_1)$ be the set of complementary regions of the tree. We label elements in $\Omega(T_1)$ as follows: The two initial regions are labeled by fraction numbers $\frac{0}{1}$ and $\frac{1}{1}$. Since each vertex except the root has three regions surrounding it, so if we know the labels of two of them, we can label the third region by their Farey sum. Thus, one obtains a bijection 
$$\Psi_2:  \mathbb{Q}\cap [0,1] \to \Omega(T_1).$$ 
By this map, the order on $\mathbb{Q}\cap[0,1]$ then descends to an order on $\Omega(T_1)$.

\subsubsection{Labelling complementary regions of the tree $T_1$ by orthogeodesics}\label{labellingcomplementaryregions}

 From Lemma \ref{lem:labeling} and the discussion above, $\Psi:=\Psi_1 \circ \Psi_2^{-1}$ is an order-preserving bijection. Thus we have the following theorem:
 \begin{thm}\label{lem:labeling}
    There is an order-preserving bijection between $\mathcal{O}_{\alpha_1} \sqcup \{\eta_{\frac{0}{1}} \} \sqcup \{\eta_{\frac{1}{1}}\}$ and $\Omega(T_1)$.
\end{thm}

The map $\Psi$ gives us a label of $\Omega(T_1)$ by the set $\mathcal{O}_{\alpha_1} \sqcup \{\eta_{\frac{0}{1}} \} \sqcup \{\eta_{\frac{1}{1}}\}$.  Furthermore, if the orthotriangulation associated with $\mathcal{B}$ is standard (see Definition \ref{standard}), each oriented orthogeodesic in $\mathcal{O}_{\alpha_1}$ can also be encoded by its crossing sequence with the orthobasis $\mathcal{B}$. Note that two oriented orthogeodesics in $\mathcal{O}_{\alpha_1}$ are of the same crossing sequence (word) if and only if they are two different directions of an orthogeodesic with a symmetric word. 

\subsubsection{Labelling edges of the tree $T_1$ by orthogeodesics}\label{weightonedges}

Let $Y$ and $Z$ be two arbitrary complementary regions of $T_1$. Thus, $\Psi(Y)$ and $\Psi(Z)$ are two oriented orthogeodesics starting from $\alpha_1$ and define an open subsegment, $(\Psi(Y), \Psi(Z))$, of $\alpha_1$. These orthogeodesics, together with the subsegment $(\Psi(Y), \Psi(Z))$, define a unique orthogeodesic denoted by $F(\Psi(Y), \Psi(Z), \alpha_1)$, forming an orthotriangle on $S$. Note that $F_{\alpha_1}(\Psi(Y), \Psi(Z))$ and $(\Psi(Y), \Psi(Z))$ are opposite sides in this orthotriangle. If $x$ is an edge of the tree $T_1$ with two neighboring complementary regions $Y$ and $Z$, then we label $x$ with the orthogeodesic $F_{\alpha_1}(\Psi(Y), \Psi(Z))$.

\subsubsection{Coding orthogeodesics by cutting sequences}\label{codingorthogeodesics}
Due to the labeling of edges, each path of edges starting from the root is associated with a sequence of elements in the orthobasis (crossing sequence). We label the associated complementary region of the path by capitalizing the word formed from the crossing sequence. Figure \ref{Pants1} illustrates a pair of pants with a standard orthotriangulation. See Figure \ref{panttree} for a tree of orthogeodesics started from $\alpha_1$ on the pair of pants with labels on edges and complementary regions, and see Figure \ref{orthoex} for an example of the crossing sequence of an orthogeodesic on the pair of pants.

This coding method can be combined with a generalized version of Theorem \ref{thm:panttorusjumpintro} to calculate the trace of any orthogeodesic on a general surface.

\begin{figure}[htbp]
    \centering
    \subcaptionbox{%
        The standard orthotriangulation on a pair of pants.%
        \label{Pants1}%
    }{%
        \begin{minipage}[t]{0.45\linewidth}
            \centering
            \leavevmode \SetLabels
            \L(.49*.8) $\alpha_1$\\%
            \L(.49*.92) $\alpha_2$\\%
            \L(.72*.32) $\beta_1$\\%
            \L(.85*.08) $\beta_2$\\
            \L(.22*.32) $\gamma_1$\\
            \L(.1*.07) $\gamma_2$\\
            \L(.07*.55) $b$\\
            \L(.49*.15) $a$\\
            \L(.9*.55) $c$\\
            \endSetLabels
            \AffixLabels{\includegraphics[width=4cm,angle=0]{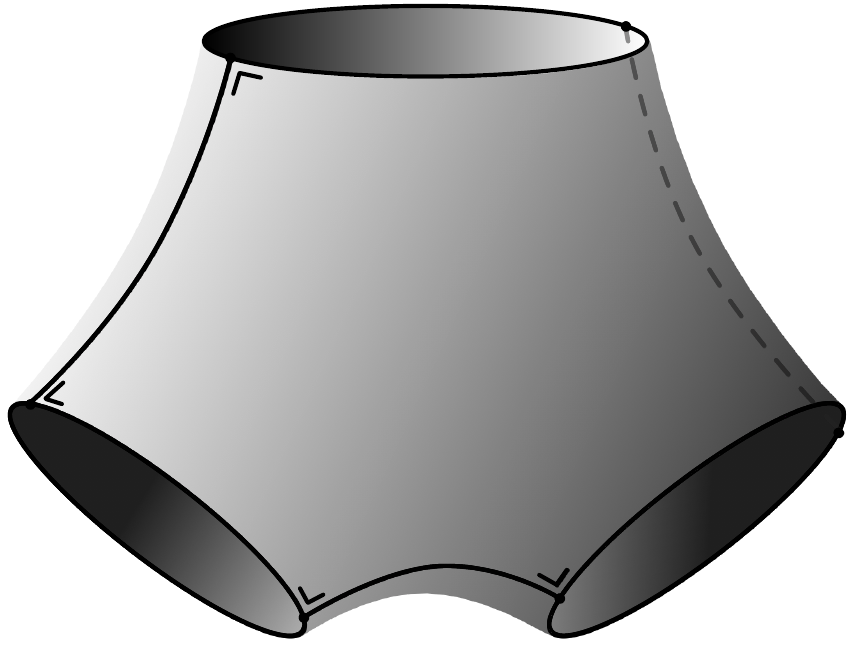}}
        \end{minipage}%
    }%
    \hfill
    \subcaptionbox{%
        Labeling the tree of oriented orthogeodesics starting at $\alpha_1$ on a pair of pants.%
        \label{panttree}%
    }{%
        \begin{minipage}[t]{0.45\linewidth}
            \centering
            \includegraphics[width=0.9\linewidth]{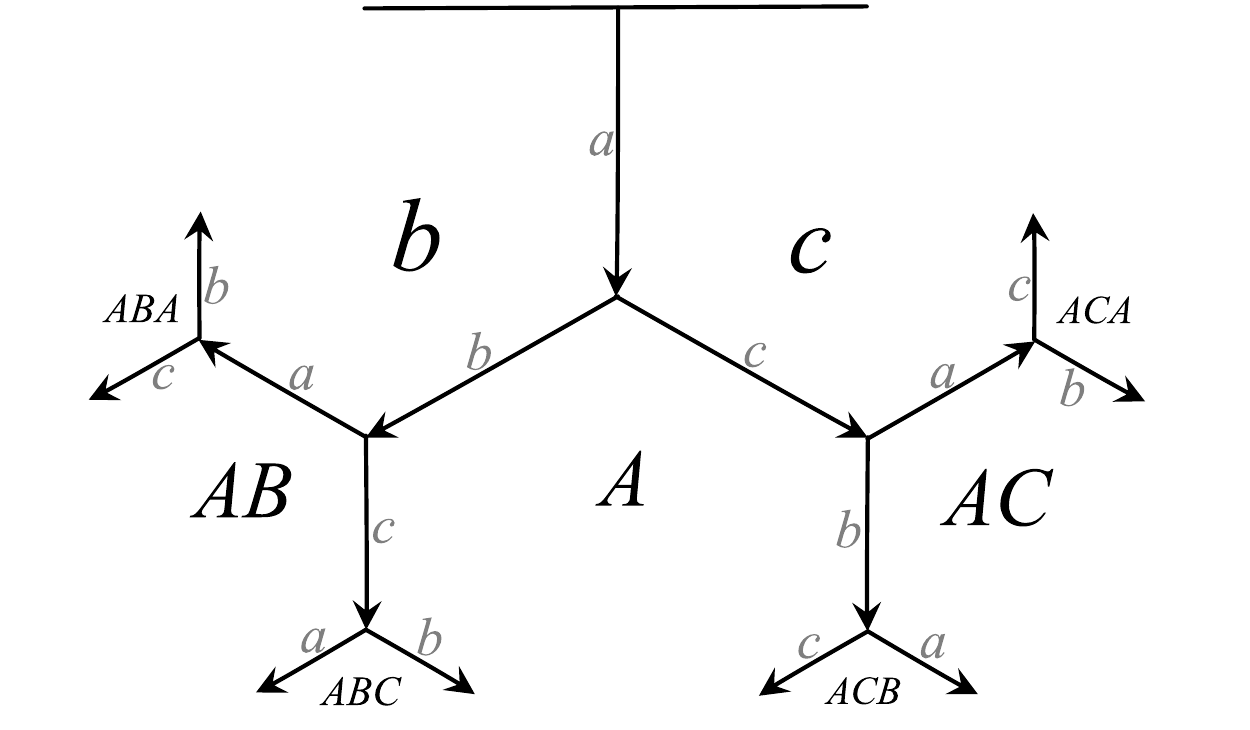}
        \end{minipage}%
    }%
    \caption{}
\end{figure}


 \begin{figure}[H]
\leavevmode \SetLabels
\L(.29*.32) $\alpha_1$\\%
\endSetLabels
\begin{center}
\AffixLabels{\centerline{\includegraphics[width=7cm,angle=0]{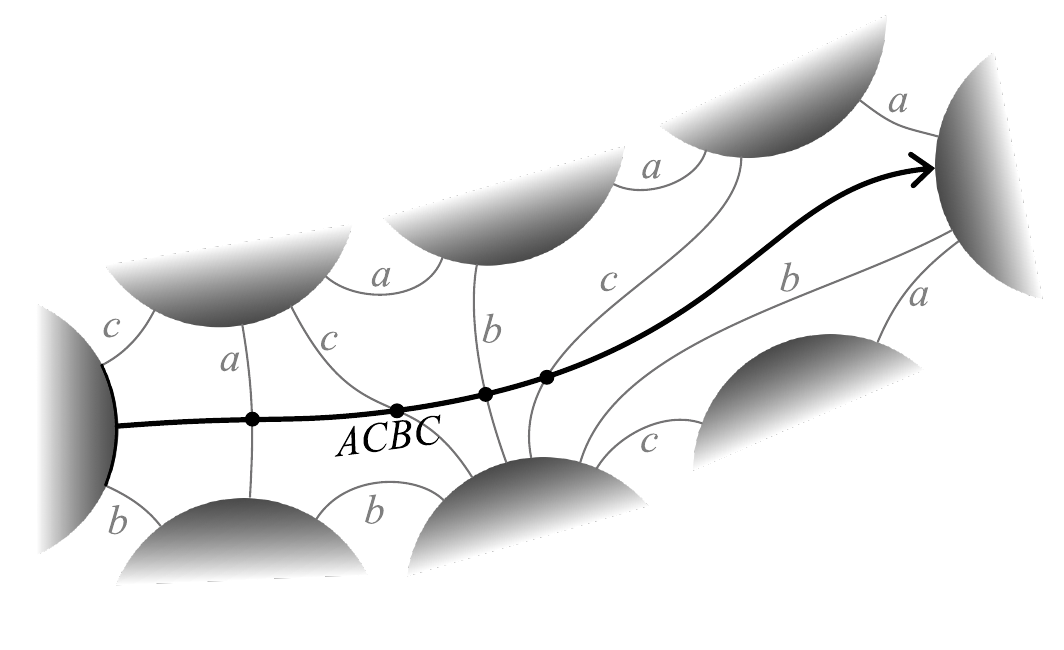}}}
\vspace{-24pt}
\end{center}
\caption{A lift of an oriented orthogeodesic $(ACBC)$ starting at $\alpha_1$.} \label{orthoex}
\end{figure}
Note that we have only defined a tree of oriented orthogeodesics starting at a subsegment of a simple closed geodesic $\alpha$ at the boundary of a surface. Since the orthobasis $\mathcal{B}$ divides $\alpha$ into a finite number of disjoint open subsegments, one can glue the roots of suitably labeled trees associated with all the subsegments in a cyclic order to get a planar rooted trivalent tree of all oriented orthogeodesics starting from $\alpha$.

\subsubsection{A more general way to label complementary regions of the tree $T_1$ by orthogeodesics}\label{coherentsubset}

A subset $G$ of $\mathcal{O}_{\alpha_1}$ is called \textbf{coherent} if, for any two elements of $G$, there are infinitely many other elements of $G$ between them. For instance, $G$ could be the set $\mathcal{O}_{\alpha_1}$ itself or the set of simple oriented orthogeodesics starting from $\alpha_1$ when $\alpha_1$ is chosen suitably.

In the proof of Lemma \ref{lem:labeling}, we defined an order-preserving bijection $\mathring{\Psi}_1$ from $\mathbb{Q} \cap (0,1)$ to $\mathcal{O}_{\alpha_1}$ with respect to a given orthobasis. We will now present a more general method to label elements in any coherent subset $G$ of $\mathcal{O}_{\alpha_1}$ using rational numbers between 0 and 1.

\underline{{\it Step 1:}} Take an arbitrary element in $G$ and name it by $\eta_{\frac{0}{1}\oplus \frac{1}{1}}$ or equivalently $\eta_{\frac{1}{2}}$. The starting point of this element divides $\alpha_1$ into two disjoint open subsegments, denoted by $(\eta_{\frac{0}{1}},\eta_{\frac{1}{2}})$ and $(\eta_{\frac{1}{2}},\eta_{\frac{1}{1}})$.

\underline{{\it Step $k\geq 2$:}} $\alpha_1$ is divided into $2^{k-1}$ disjoint open subsegments of the form $(\eta_{\frac{p_1}{q_1}},\eta_{\frac{p_2}{q_2}})$ where $(\frac{p_1}{q_1},\frac{p_2}{q_2})$ is a Farey pair. Denote by $G(\eta_{\frac{p_1}{q_1}},\eta_{\frac{p_2}{q_2}})$ the set of oriented orthogeodesics in $G$ starting from the subsegment $(\eta_{\frac{p_1}{q_1}},\eta_{\frac{p_2}{q_2}})$. Similarly to step 1, we take an arbitrary element in $G(\eta_{\frac{p_1}{q_1}},\eta_{\frac{p_2}{q_2}})$ and name it by $\eta_{\frac{p_1}{q_1}\oplus \frac{p_2}{q_2}}$ or equivalently $\eta_{\frac{p_1+p_2}{q_1+q_2}}$. We do it for all other subsegments and obtain $2^{k}$ new disjoint open subsegments.

The above labeling gives an order-preserving injection
$$\mathring{\Psi}'_1: \mathbb{Q}\cap(0,1) \to G.$$
This map can be extended naturally to an order-preserving injection
$$\Psi'_1:\mathbb{Q}\cap[0,1] \to G\sqcup \{\eta_{\frac{0}{1}}\}\sqcup \{\eta_{\frac{1}{1}}\}.$$

For each orthogeodesic \(X\), we denote \(h(X):=\frac{1}{2}\log\left(\frac{X+1}{X-1}\right)\) as the radius of the stable neighborhood of \(X\). In each step \(k \geq 1\), we have \(2^{k-1}\) disjoint open subsegments of \(\alpha_1\). Let \((X,Y)\) be one of these subsegments, and let \(\ell(X,Y)\) be the length of this subsegment. Let \(m(X,Y):=\ell(X,Y)-h(X)-h(Y)\) be the modified length of the segment \((X,Y)\). We denote by \(m_k\) the maximum of the modified lengths of the associated \(2^{k-1}\) disjoint subsegments in step \(k\). Then

\begin{lem} \label{mk}
The map \(\Psi'_1\) is an order-preserving bijection if \(m_k \to 0\) as \(k \to \infty\).
\end{lem}

\begin{proof}
Assume that \(X\) is an oriented orthogeodesic in \(G\) such that \(\Psi'_1(p/q) \neq X\) for all \(p/q \in \mathbb{Q} \cap [0,1]\). Let \((Y_k, Z_k)\) be the subsegment containing \(X\) at step \(k\). It is easy to see that \(m(Y_k, Z_k) \geq 2h(X)\) for all \(k \geq 1\), which is a contradiction to the fact that \(m_k \to 0\) as \(k \to \infty\).
\end{proof}



Combining with Lemma \ref{mk}, we obtain the following theorem:

\begin{thm}\label{bijection} If $m_k \to 0$ as $k \to \infty$, then the map $\Psi':=\Psi'_1 \circ \Psi_2^{-1}$ is an order-preserving bijection.
\end{thm}
Therefore, the map $\Psi'$ labels $\Omega(T_1)$ by the set $G \sqcup \{\eta_{\frac{0}{1}}\} \sqcup \{\eta_{\frac{1}{1}}\}$. One can check that when $G=\mathcal{O}_{\alpha_1}$ the map $\Psi$ defined in Section \ref{labellingcomplementaryregions} is an example of the map $\Psi'$ described in Theorem \ref{bijection}.

\subsection{Identity relations}

\subsubsection{Recursive formula and vertex relation}\label{recursiveformula}
Let $X,Y,Z$ be three complementary regions surrounding a vertex $v$ (not at the root) of the tree $T_1$. Let $x,y,z$ be three edges that intersect $X,Y,Z$ respectively at only $v$. Suppose that the set of edges of $T_1$ is oriented outwards from the root. If the endpoint of the oriented edge $x$ and the starting point of the oriented edges $y$ and $z$ coincide at $v$ (see Figure \ref{vertex}), by the Ptolemy relation of geodesics in Lemma \ref{recursive}, one can compute $X$ in term of $Y,Z,x,y,z$:
 $$X=\frac{(xy+z)Y+(xz+y)Z+\sqrt{(x^2+y^2+z^2+2xyz-1)(x^2+Y^2+Z^2+2xYZ-1)}}{x^2-1}.$$
 \vspace{-24pt}
  \begin{figure}[H]
\leavevmode \SetLabels
\L(.49*.65) $x$\\%
\L(.44*.2) $y$\\%
\L(.55*.2) $z$\\%
\L(.495*.25) $v$\\
\L(.49*.05) $X$\\
\L(.56*.5) $Y$\\
\L(.43*.5) $Z$\\
\endSetLabels
\begin{center}
\AffixLabels{\centerline{\includegraphics[width=3cm,angle=0]{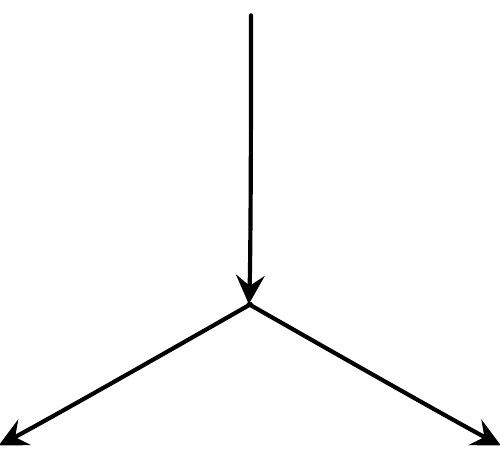}}}
\vspace{-24pt}
\end{center}
\caption{Positions of $X,Y,Z,x,y,z$.} \label{vertex}
\end{figure}

Note that by Corollary \ref{triangle}, we also have a \textbf{vertex relation} as follows:
\begin{equation*}
       (x^2-1)X^2+(y^2-1)Y^2+(z^2-1)Z^2-2(xy+z)XY-2(yz+x)YZ-2(xz+y)XZ
=x^2+y^2+z^2+2xyz-1 
\end{equation*}

\subsubsection{Isosceles trapezoid, rectangle, kite, parallelogram and edge relations}\label{Isosceles trapezoid and rectangle relations} 
For any two complementary regions \(U\) and \(V\) of the tree \(T_1\), we write \(UV\) as the trace of the orthogeodesic \(F_{\alpha_1}(\Psi(U), \Psi(V))\) as defined in Section \ref{weightonedges}. Let \(X, Y, Z,\) and \(T\) be four complementary regions. By Corollary \ref{all} in Section \ref{appendix}, we obtain the following relations:

\vspace{-3mm}
\begin{itemize}
\item[1.](Ortho-isosceles trapezoid): If $XT=YZ$, and $XZ=YT$, then: $\dfrac{X-Y}{\sqrt{XY+1}}=\dfrac{Z-T}{\sqrt{ZT+1}}.$ 
\item[2.](Rectangle): If $XT=YZ$, $XY=ZT$, and $XZ=YT$, then: $X+T=Y+Z.$
\item[3.](Kite): If $XY=XT$, and $ZY=ZT$, then: $T+Y=\dfrac{2(XZ.YZ+XY)}{(XZ)^2-1}X+\dfrac{2(XZ.XY+YZ)}{(XZ)^2-1}Z.$
\item[4.](Parallelogram): If $XY=ZT$, and $XT=YZ$, then: $X+Z=(Y+T)\sqrt{\dfrac{XZ-1}{YT-1}}.$
\end{itemize}
Furthermore, one obtains \textbf{edge relations} of orthogeodesics on certain special surfaces as in the following examples.

\textbf{Example 1: Edge relations on a pair of pants.} Denote by $\{a,b,c\}$ the standard orthobasis of a pair of pants, that is, the set of shortest simple orthogeodesics connecting two distinct boundary components. This orthobasis cuts each boundary geodesic component into two geodesic segments of equal length. The six resulting segments are denoted by $\alpha_1,\alpha_2,\beta_1,\beta_2,\gamma_1,\gamma_2$ as in Figure \ref{Pants1}. Let $T_1$ be the tree of oriented orthogeodesics starting from $\alpha_1$. As in Figure \ref{Pants1}, $b$ and $c$ are on the left and the right of the segment $\alpha_1$ and $a$ is opposite to $\alpha_1$. Each edge of $T_1$ is labeled by either $a$ or $b$ or $c$ following the grammar of the dual graph of the orthotriangulation. We also use words formed from the capital letters $A,B,C$ to label the complementary regions of $T_1$ except for the two initial regions. Figure \ref{panttree} is an illustration of the tree $T_1$.

Edge relations are special cases of orthokite relations when four edges of the orthokite are elements in the standard orthobasis of a pair of pants. Four regions surround each edge - except for the edge with an endpoint at the root. We choose arbitrarily an edge $x\in \{a,b,c\}$ with four surrounding regions $X_1,Y,Z,X_2$ with $Y \cap Z =x$. Due to the standard orthobasis of a pair of pants, $X_1Y=X_2Y=:z$ and $X_1Z=X_2Z=:y$, where $y,z$ are distinct elements in $\{a,b,c\}-\{x\}$. Thus one can denote the labels of $X_1\cap Y$, $X_1 \cap Z$, $X_2\cap Y$, $X_2 \cap Z$ by $z,y,z,y$ respectively (see e.g. Figure \ref{recur}). We have an edge relation of orthogeodesics on a pair of pants: 
\begin{equation}\label{Edgerelation1}
X_1+X_2=\frac{2(xy+z)}{x^2-1}Y+\frac{2(xz+y)}{x^2-1}Z.
\end{equation}

\textbf{Example 2: Edge relations on a one-holed torus.} Let $\{a,b,c\}$ be an arbitrary orthobasis of a one-holed torus, that is, the set of three non-intersecting simple orthogeodesics. This orthobasis cuts the boundary geodesic component of the one-holed torus into six geodesic segments. 
\begin{figure}[H]
\begin{minipage}[t]{0.5\textwidth}
  \centering
  \leavevmode \SetLabels
  \L(.5*.8) $Y$\\%
  \L(.5*.52) $x$\\%
  \L(.81*.25) $y$\\%
  \L(.81*.74) $z$\\%
  \L(.16*.25) $y$\\%
  \L(.16*.74) $z$\\%
  \L(.08*.5) $X_1$\\%
  \L(.5*.15) $Z$\\%
  \L(.87*.5) $X_2$\\%
  \endSetLabels
  \AffixLabels{\includegraphics[width=4cm,angle=0]{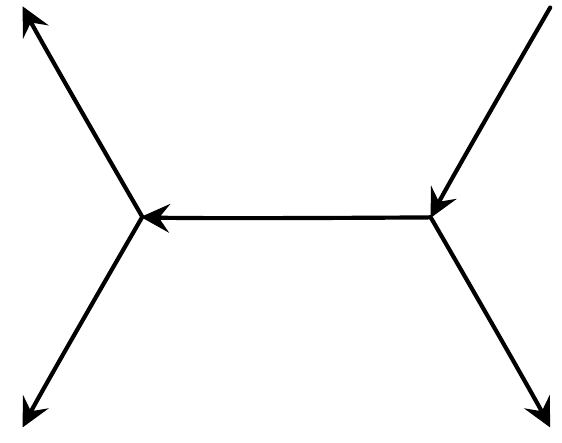}}
  \caption{Positions of $X_1, X_2, Y, Z, x, y, z$ in the case of a pair of pants.}
  \label{recur}
\end{minipage}%
\begin{minipage}[t]{0.5\textwidth}
  \centering
  \leavevmode \SetLabels
  \L(.5*.8) $Y$\\%
  \L(.5*.52) $x$\\%
  \L(.81*.25) $y$\\%
  \L(.81*.74) $z$\\%
  \L(.16*.25) $z$\\%
  \L(.16*.74) $y$\\%
  \L(.08*.5) $X_1$\\%
  \L(.5*.15) $Z$\\%
  \L(.87*.5) $X_2$\\%
  \endSetLabels
  \AffixLabels{\includegraphics[width=4cm,angle=0]{fig.edge.pdf}}
  \caption{Positions of $X_1, X_2, Y, Z, x, y, z$ in the case of a one-holed torus.}
  \label{onehole}
\end{minipage}
\end{figure}
Let $T_1$ be the tree of oriented orthogeodesics starting from one of these six segments. Four regions surround each edge - except for the edge with an endpoint at the root. We choose arbitrarily an edge \( x \in \{a, b, c\} \) with four surrounding regions \( X_1, Y, Z, X_2 \) where \( Y \cap Z = x \). We have $X_1Y=X_2Z=:y$ and $X_1Z=X_2Y=:z$, where $y,z$ are distinct elements in $\{a,b,c\}-\{x\}$. Thus one can denote the labels of $X_1\cap Y$, $X_1 \cap Z$, $X_2\cap Y$, $X_2 \cap Z$ by $y,z,z,y$ respectively (see Figure \ref{onehole}). Then by the parallelogram relation, one has  $$X_1+X_2=(Y+Z)\sqrt{\frac{X_1X_2-1}{YZ-1}}.$$
Note that $X_1X_2$ is the trace of the simple orthogeodesic with label $X$ crossing the edge $x$. Thus one can use the recursive formula (Section \ref{recursiveformula}) to compute $X_1X_2$ in terms of $x,y,z$ as follows:
$$X_1X_2=\frac{(xy+z)y+(xz+y)z+x^2+y^2+z^2+2xyz-1}{x^2-1}=\frac{(y+z)^2}{x-1}+1.$$
Note that $YZ=x$, thus we have an edge relation of orthogeodesics on a one-holed torus:
\begin{equation}\label{Edgerelation2}X_1+X_2=\frac{y+z}{x-1}(Y+Z).
\end{equation}

    
    
    


\section{Applications}\label{applications}
In this section, we present some applications from the study of the tree structure on the set of orthogeodesics.
\subsection{Ortho-integral surfaces}\label{integralspectrum}
A hyperbolic surface is \textbf{ortho-integral} if the trace of each orthogeodesic is an integer. Denote by \(P(a,b,c)\) and \(T(a,b,c)\) respectively a pair of pants and a one-holed torus with an orthobasis of trace \((a,b,c)\). Using the vertex and edge relations, one can give conditions on pairs of pants and one-holed tori such that they are ortho-integral. 
\begin{thm}\label{Thm: integral surfaces}
(i) A pair of pants is ortho-integral if it admits an orthobasis $(a,b,c)$ belonging to the following set:  
$$(2,2,2),(3,3,3),(2,2,5),(3,3,7),(5,5,11).$$
(ii) A one-holed torus is ortho-integral if it admits an orthobasis $(a,b,c)$ belonging to the following set:   
$$ (2, 2, 2),(2, 2, 3),(2, 2, 5),(2, 3, 6),(2, 4, 4),(2, 4, 7),(2, 5, 8),(2, 7, 10),(2, 13, 16),$$
$$(3, 3, 3),(3, 3, 7),(3, 5, 5),(3, 5, 9),(3, 9, 13),(3, 17, 21),(4, 4, 5),(4, 5, 10),(4, 6, 6),(4, 11, 16),$$
$$(5, 5, 11),(5, 7, 13),(5, 13, 19),(6, 6, 9),(6, 36, 64),(7, 9, 9),(7, 17, 25),(8, 13, 22),(9, 11, 21),$$
$$(10, 10, 17),(10, 12, 12),(11, 49, 61),(13, 29, 43),(17, 19, 37),(19, 21, 21).$$
\end{thm}

\begin{proof}
We provide a proof of Theorem \ref{Thm: integral surfaces} for certain cases in Examples \ref{Example:1} and \ref{Example:2}. Other cases are either identical or simpler.


\begin{example}
    \label{Example:1} Let $P(2,2,5)$ be a pair of pants with an orthobasis $(2,2,5)$. The vertex relation (divided by 3) in this case is as follows.
\begin{equation}\label{equ: Diophantine}
    X^2+Y^2+8Z^2-6XY-8YZ-8ZX=24.
\end{equation}
In this case, the edge relations (refer to Equation \ref{Edgerelation1}) are Vieta's formulas. They are as follows: 
$$X_1+X_2 = 6Y+8Z, \,\,\,\,Y_1+Y_2 = 8Z+6X, \,\,\,\, Z_1+Z_2 = X+Y.$$ 
If we start with the solution \((X, Y, Z)\), the neighboring solutions will be 
$$\left(6Y+8Z-X,Y,Z\right),\,\,\,\, \left(X,8Z+6X-Y,Z\right), \,\,\,\, \left(X,Y,X+Y-Z\right).$$ 
This Vieta jumping corresponds to the action of the following group on the set of solutions of Equation \ref{equ: Diophantine}:
$$G_{P(2,2,5)}:=\left\langle f_X:=\begin{bmatrix}
    -1 & 6& 8\\
    0 & 1& 0\\
    0&0&1
\end{bmatrix},f_Y:=\begin{bmatrix}
    1 & 0& 0\\
    6 & -1& 8\\
    0&0&1
\end{bmatrix}, f_Z:=\begin{bmatrix}
    1 & 0 & 0\\
    0 & 1& 0\\
    1&1& -1
\end{bmatrix}\right \rangle .$$

\begin{figure}[h]
\centering
\begin{tikzpicture}[node distance=0.1cm and 0.5cm]
\node (a) at (0,0) {$( -1, 5, 2 )$};
\node (b) [right=of a] {$( 47, 5, 2 )$};
\node (bu) [above right=of b] {$( 47, 293, 2 )$};
\node (bd) [below right=of b] {$( 47, 5, 50 )$};
\node (buu) [above right=of bu] {$( 47, 293, 338 )\cdots$};
\node (bud) [right=of bu] {$(1727, 293, 2 )\cdots$};
\node (bdu) [right=of bd] {$(383, 5, 50 )\cdots$};
\node (bdd) [below right=of bd] {$(47, 677, 50 )\cdots$};

\draw[-] (a) --node[midway, above] {$f_X$} (b);
\draw[-] (b) --node[midway, above] {$f_Y$} (bu);
\draw[-] (b) --node[midway, above] {$f_Z$} (bd);
\draw[-] (bu) --node[midway, above] {$f_Z$} (buu);
\draw[-] (bu) --node[midway, below] {$f_X$} (bud);
\draw[-] (bd) --node[midway, above] {$f_X$} (bdu);
\draw[-] (bd) --node[midway, below] {$f_Y$} (bdd);

\draw[-] (a.150) to[out=120, in=150, looseness=8] node[midway, above left] {$f_Z$} (a.160);
\draw[-] (a.200) to[out=240, in=270, looseness=8] node[midway, below left] {$f_Y$} (a.210);

\end{tikzpicture}
\caption{Orbit of \((-1, 5, 2)\) under the action of \(G_{P(2,2,5)}\).}
\label{orbitpants}
\end{figure}

The traces of all orthogeodesics on \(P(2,2,5)\) are positive numbers in the \(G_{P(2,2,5)}\)-orbits of fundamental solutions \((-1, 5, 2)\), \((5, -1, 2)\), and \((2, 2, -1)\) of Equation \ref{equ: Diophantine}. Thus, it is obvious that \(P(2,2,5)\) is ortho-integral. There are three distinct orbits, each corresponding to one of the three boundary components of the pair of pants.  Figure \ref{orbitpants} illustrates the orbit of \((-1, 5, 2)\) under the action of \(G_{P(2,2,5)}\).

\end{example}

The following example is much more subtle. 

\begin{example}
    \label{Example:2} Let $T(3,17,21)$ be a torus with an orthobasis $(3,17,21)$. This torus is obtained from gluing two boundaries of the pair of pants with standard orthobasis $(3,3,19)$. The vertex relation (divided by $8$) in this case is as follows.
\begin{equation}\label{equ: Diophantinetorus}
    X^2+36Y^2+55Z^2-18XY-90YZ-20XZ=360
\end{equation}
The edge relations in this case (refer to Equation \ref{Edgerelation2}) are not Vieta's formulas. They are as follows:
\[ X_1 + X_2 = 19(Y + Z), \quad Y_1 + Y_2 = \frac{3}{2}(Z + X), \quad Z_1 + Z_2 = X + Y, \]
then combined with a \textit{swap} between two other entries (see Figure \ref{fig:swap}). 
\begin{figure}[H]
\leavevmode \SetLabels
\L(.5*.8) $Y$\\%
\L(.5*.52) $3$\\%
\L(.57*.25) $17$\\%
\L(.579*.74) $21$\\
\L(.41*.25) $21$\\
\L(.4*.74) $17$\\
\L(0.27*.5) $19(Y+Z)-X$\\
\L(.5*.15) $Z$\\
\L(.65*.5) $X$\\
\endSetLabels
\begin{center}
\AffixLabels{\centerline{\includegraphics[width=3cm,angle=0]{fig.edge.pdf}}}
\vspace{-24pt}
\end{center}
\caption{The jump $(X,Y,Z) \leftrightarrow (19(Y+Z)-X,Z,Y)$.} \label{fig:swap}
\end{figure}
In particular, if we start with the solution \((X, Y, Z)\), the neighboring solutions will be 
$$\left(19(Y+Z)-X,Z,Y\right),\,\,\,\, \left(Z,\frac{3}{2}(Z+X)-Y,X\right), \,\,\,\, \left(Y,X,X+Y-Z\right).$$ 
This type of root flipping corresponds to the action of the following group on the set of solutions of Equation \ref{equ: Diophantinetorus}:
$$G_{T(3,17,21)}:=\left\langle g_X:=\begin{bmatrix}
    -1 & 19& 19\\
    0 & 0& 1\\
    0&1&0
\end{bmatrix},g_Y:=\begin{bmatrix}
    0 & 0& 1\\
    3/2 & -1& 3/2\\
    1&0&0
\end{bmatrix}, g_Z:=\begin{bmatrix}
    0 & 1 & 0\\
    1 & 0& 0\\
    1&1& -1
\end{bmatrix}\right \rangle .$$
The traces of all orthogeodesics on \( T(3,17,21) \) are positive numbers in the \( G_{T(3,17,21)} \)-orbit of the fundamental solution \( (-1, 21, 17) \) of Equation \ref{equ: Diophantinetorus}, illustrated in Figure \ref{fig:orbittorus}.

\begin{figure}[h]
\centering
\begin{tikzpicture}[node distance=0.1cm and 0.5cm, >={Stealth}]
\node (a) at (1.5,0) {$( -1, 21, 17 )$};
\node (aa) [right=of a] {$( 723, 17, 21 )$};
\node (aau) [above right=of aa] {$( 21, 1099, 723 )\cdots$};
\node (aad) [below right=of aa] {$( 17, 723, 719 )\cdots$};
\node (b) at (0,-1.5) {$(17, 3, -1)$};
\node (c) at (0,1.5) {$(21, -1, 3)$};
\node (bb) [left=of b] {$(3, 17, 21 )$};
\node (bbu) [above left=of bb] {$\cdots( 21, 19, 3 )$};
\node (bbd) [ left=of bb] {$\cdots( 719, 21, 17 )$};
\node (cc) [left=of c] {$(3, 37, 21 )$};
\node (ccu) [ left=of cc] {$\cdots( 1099, 21, 37 )$};
\node (ccd) [below left=of cc] {$\cdots( 37, 3, 19 )$};
\draw[-] (a) -- node[midway, above] {$g_X$} (aa);
\draw[-] (aa) -- node[midway, above] {$g_Y$} (aau);
\draw[-] (aa) -- node[midway, above] {$g_Z$} (aad);
\draw[-] (a) -- node[midway, right] {$g_Y$} (b);
\draw[-] (a) -- node[midway, right] {$g_Z$} (c);
\draw[-] (b) -- node[midway, left] {$g_X$} (c);
\draw[-] (b) -- node[midway, above] {$g_Z$} (bb);
\draw[-] (c) -- node[midway, above] {$g_Y$} (cc);
\draw[-] (bb) -- node[midway, above] {$g_Y$} (bbu);
\draw[-] (bb) -- node[midway, below] {$g_X$} (bbd);
\draw[-] (cc) -- node[midway, above] {$g_X$} (ccu);
\draw[-] (cc) -- node[midway, below] {$g_Z$} (ccd);
\end{tikzpicture}
\caption{Orbit of \(( -1, 21, 17 )\) under the action of $G_{T(3,17,21)}$.}
\label{fig:orbittorus}
\end{figure}

It remains to prove that the $G_{T(3,17,21)}$-orbit of $( -1, 21, 17 )$ is in $\mathbb{Z}^3$. Indeed, we note that if $(X,Y,Z)$ is an integer solution of Equation \ref{equ: Diophantinetorus}, then $X^2+55Z^2$ must be even. It implies that $X+Z$ must be even, and $(Z,\frac{3}{2}(X+Z)-Y,X)$ is also an integer solution of Equation \ref{equ: Diophantinetorus}. Thus $T(3,17,21)$ is ortho-integral.
\end{example}
\end{proof}

Let \( S \) be a hyperbolic surface with non-empty, totally geodesic boundary and an orthobasis \(\mathcal{B}\). Two oriented orthogeodesics on \( S \) are \textit{(Farey) neighbors} (with respect to \(\mathcal{B}\)) if they start from the same boundary component and, together with an element of \(\mathcal{B}\) and the boundary components, form an immersed hexagon on \( S \) (see Definition \ref{dfn:neighboringorthogeodesics} for a more general version).

The following statement is generalized from Examples \ref{Example:1} and \ref{Example:2}. 

\begin{thm}\label{thm:panttorusjump}
\bi
\item[(i)] If $\textbf{v}=(X,Y,Z)^T$ is a vector of traces of three pairwise neighboring orthogeodesics on a pair of pants $P(a,b,c)$, then all of the traces of orthogeodesics starting from the same boundary component with $X,Y,Z$ on $P(a,b,c)$ are given by the coordinates of vectors in the orbit $G_{P(a,b,c)}\cdot \textbf{v}$, where $G_{P(a,b,c)}$ is a group
generated by
$$f_X=\begin{bmatrix}
    -1 & \frac{2(ab+c)}{a^2-1}& \frac{2(ac+b)}{a^2-1}\\
    0 & 1& 0\\
    0&0&1
\end{bmatrix},f_Y=\begin{bmatrix}
    1 & 0& 0\\
    \frac{2(ba+c)}{b^2-1} & -1& \frac{2(bc+a)}{b^2-1}\\
    0&0&1
\end{bmatrix},
f_Z=\begin{bmatrix}
    1 & 0 & 0\\
    0 & 1& 0\\
    \frac{2(ca+b)}{c^2-1}&\frac{2(cb+a)}{c^2-1}& -1
\end{bmatrix}.$$
\item[(ii)] If $\textbf{v}=(X,Y,Z)^T$ is a vector of traces of three pairwise neighboring orthogeodesics on a one-holed torus $T(a,b,c)$, then all of the traces of orthogeodesics on $T(a,b,c)$ are given by the coordinates of vectors in the orbit $G_{T(a,b,c)}\cdot \textbf{v}$, where $G_{T(a,b,c)}$ is a group generated by
$$g_X=\begin{bmatrix}
    -1 & \frac{b+c}{a-1}& \frac{b+c}{a-1}\\
    0 & 0& 1\\
    0&1&0
\end{bmatrix},g_Y=\begin{bmatrix}
    0 & 0& 1\\
    \frac{a+c}{b-1} & -1& \frac{a+c}{b-1}\\
    1&0&0
\end{bmatrix},
g_Z=\begin{bmatrix}
    0 & 1 & 0\\
    1 & 0& 0\\
    \frac{a+b}{c-1}&\frac{a+b}{c-1}& -1
\end{bmatrix}.$$
\ei
\end{thm}

These above results tell us the following corollary:
\begin{cor}
If $(a,b,c)$ is an orthobasis of an ortho-integral torus or pair of pants, then the following Diophantine equation has infinitely many integral solutions:
$$(a^2-1)X^2+(b^2-1)Y^2+(c^2-1)Z^2-2(ab+c)XY-2(bc+a)YZ-2(ac+b)XZ = a^2+b^2+c^2+2abc-1.$$
\end{cor}

 \textbf{Remark.} We observe that there are four distinct orthobases on a pair of pants, whereas a one-holed torus possesses infinitely many distinct orthobases. Consequently, there are infinitely many Diophantine equations associated with a one-holed torus. Refer to Table \ref{table:2} for a list of some Diophantine equations related to ortho-integral tori and pairs of pants.

 \begin{longtable}{|c|c|}

\hline
\cellcolor{gray!30} \textbf{Orthobasis $(a,b,c)$}  & \cellcolor{gray!30} \textbf{Diophantine equation} \\
\hline
\endfirsthead

\hline
\cellcolor{gray!30} \textbf{Orthobasis $(a,b,c)$}  & \cellcolor{gray!30} \textbf{Diophantine equation} \\
\hline
\endhead



  \hline
  $(2,2,2)^{\star}$ & $X^2+Y^2+Z^2-4XY-4YZ-4XZ=9$  \\
  \hline
  $(2,2,3)$ & $3X^2+3Y^2+8Z^2-14XY-16YZ-16XZ=40$ \\
  \hline
  $(2,2,5)^{\star}$ & $X^2+Y^2+8Z^2-6XY-8YZ-8XZ=24$\\
  \hline
  $(2,3,6)$ & $3X^2+3Y^2+35Z^2-24XY-40YZ-30XZ=120$\\
  \hline
  $(2,4,4)$ & $X^2+5Y^2+5Z^2-8XY-12YZ-8XZ=33$\\
  \hline
  $(2,4,7)$ & $X^2+5Y^2+16Z^2-10XY-20YZ-12XZ=60$\\
  \hline
  $(2,5,8)$ & $X^2+8Y^2+21Z^2-12XY-28YZ-14XZ=84$\\
  \hline
  $(2,7,10)$ & $X^2+16Y^2+33Z^2-16XY-48YZ-18XZ=144$\\
  \hline
  $(2,3,16)$ & $X^2+56Y^2+85Z^2-28XY-140YZ-30XZ=420$\\

  \hline
  $(3,3,3)^{\star}$ & $X^2+Y^2+Z^2-3XY-3YZ-3XZ=10$\\
  \hline
  $(3,3,7)^{\star}$ & $X^2+Y^2+6Z^2-4XY-6YZ-6XZ=24$\\
  \hline
  $(3,5,5)$ & $X^2+3Y^2+3Z^2-5XY-7YZ-5XZ=26$\\
  \hline
  $(3,5,9)$ & $X^2+3Y^2+10Z^2-6XY-12YZ-8XZ=48$\\
  \hline
  $(3,9,13)$ & $X^2+10Y^2+21Z^2-10XY-30YZ-12XZ=120$\\
  \hline
  $(3,17,21)$ & $X^2+36Y^2+55Z^2-18XY-90YZ-20XZ=360$\\
  \hline
  $(4,4,5)$ & $5X^2+5Y^2+8Z^2-14XY-16YZ-16XZ=72$\\
  \hline
  $(4,5,10)$ & $5X^2+8Y^2+33Z^2-20XY-36YZ-30XZ=180$\\
  \hline
  $(4,6,6)$ & $3X^2+7Y^2+7Z^2-12XY-16YZ-12XZ=75$\\
  \hline
  $(4,11,16)$ & $X^2+8Y^2+17Z^2-8XY-24YZ-10XZ=120$\\
  \hline
  $(5,5,11)^{\star}$ & $X^2+Y^2+5Z^2-3XY-5YZ-5XZ=30$\\
  \hline
  $(5,7,13)$ & $X^2+2Y^2+7Z^2-4XY-8YZ-6XZ=48$\\
  \hline
  $(5,13,19)$ & $X^2+7Y^2+15Z^2-7XY-21YZ-9XZ=126$\\
  \hline
  $(6,6,9)$ & $7X^2+7Y^2+16Z^2-18XY-24YZ-24XZ=160$\\
  \hline
  $(6,36,64)$ & $X^2+37Y^2+117Z^2-16XY-132YZ-24XZ=945$\\
  \hline
  $(7,9,9)$ & $3X^2+5Y^2+5Z^2-9XY-11YZ-9XZ=84$\\
  \hline
  $(7,17,25)$ & $X^2+6Y^2+13Z^2-6XY-18YZ-8XZ=144$\\
  \hline
  $(8,13,22)$ & $3X^2+8Y^2+23Z^2-12XY-28YZ-18XZ=252$\\
  \hline
  $(9,11,21)$ & $2X^2+3Y^2+11Z^2-6XY-12YZ-10XZ=120$\\
  \hline
  $(10,10,17)$ & $11X^2+11Y^2+32Z^2-26XY-40YZ-40XZ=432$\\
  \hline
  $(10,12,12)$ & $9X^2+13Y^2+13Z^2-24XY-28YZ-24XZ=297$\\
  \hline
  $(11,49,61)$ & $X^2+20Y^2+31Z^2-10XY-50YZ-12XZ=600$\\
  \hline
  $(13,29,43)$ & $X^2+5Y^2+11Z^2-5XY-15YZ-7XZ=210$\\
  \hline
  $(17,19,37)$ & $4X^2+5Y^2+19Z^2-10XY-20YZ-18XZ=360$\\
  \hline
   $(19,21,21)$ & $9X^2+11Y^2+11Z^2-21XY-23YZ-21XZ=450$\\
  \hline
\captionsetup{justification=centering}
\caption{List of Diophantine equations associated with ortho-integral tori. The triples marked with $\star$ also correspond to ortho-integral pairs of pants. }
\label{table:2} 
\end{longtable}

Moreover, by gluing some of these ortho-integral building blocks together without twisting (see Figure \ref{fig:4holedsphere}), one can obtain other ortho-integral surfaces. For example:

\begin{figure}[h]
\centering
\begin{tikzpicture}[scale=0.45] 
  \begin{scope}[rotate=-90]
    \draw (0,0) ellipse (.5 and .1);
    \draw (-1,-2) ellipse (.5 and .1);
    \draw (1,-2) ellipse (.5 and .1);
    
    \draw (-.5,0) to[out=-90,in=90] node[midway, above] {y} (-1.5,-2);
    \draw (.5,0) to[out=-90,in=90] node[midway, below] {y} (1.5,-2);
    \draw (-.5,-2) to[out=90,in=90] node[midway, left] {x} (.5,-2);
  \end{scope}
  
  \begin{scope}[shift={(0.3,0)}, rotate=90] 
    \draw (0,0) ellipse (.5 and .1);
    \draw (-1,-2) ellipse (.5 and .1);
    \draw (1,-2) ellipse (.5 and .1);
    
    \draw (-.5,0) to[out=-90,in=90] node[midway, below] {y} (-1.5,-2);
    \draw (.5,0) to[out=-90,in=90] node[midway, above] {y} (1.5,-2);
    \draw (-.5,-2) to[out=90,in=90] node[midway, right] {x} (.5,-2);
  \end{scope}
  
\end{tikzpicture}
\captionsetup{justification=centering}
\caption{The four-holed sphere obtained by gluing two pairs of pants $P(y, y, x)$ at the boundary component opposite to the orthogeodesic with label $x$, without twisting.}
\label{fig:4holedsphere}
\end{figure}

\begin{cor}\label{333+333}
The four-holed sphere formed by gluing two copies of a pair of pants with standard orthobasis $(3,3,3)$, without twisting, is ortho-integral.

\end{cor}
 
\begin{proof}
We choose a standard orthobasis where two arbitrary neighboring hexagons form an ortho-parallelogram. By this choice, one can observe that this four-holed sphere is iso-orthospectral (up to multiplicity 2) to the one-holed torus $T(3,17,21)$. Thus the four-holed sphere is also ortho-integral.
\end{proof}
We use the notation $(y, y, x) + (x, y, y) = (x, 2y^2 - 1, x + 2y^2)$ to indicate that the four-holed sphere obtained by gluing two pairs of pants $P(y, y, x)$ at the boundary component opposite to the orthogeodesic with label $x$, without twisting, is iso-orthospectral (up to multiplicity 2) to the one-holed torus $T(x, 2y^2 - 1, x + 2y^2)$. From the proof of Corollary \ref{333+333}, we have other examples of ortho-integral four-holed spheres as follows.
\bi
\item $(2,7,10)=(2,2,2)+(2,2,2)$,
\item $(3,17,21)=(3,3,3)+(3,3,3)$,
\item $(5,7,13)=(2,2,5)+(5,2,2)$,
\item $(7,17,25)=(3,3,7)+(7,3,3)=(2,2,17)+(17,2,2)$,
\item  $(11,49,61)=(5,5,11)+(11,5,5)$,
\item $(17,19,37)=(3,3,19)+(19,3,3)$.
\ei

\subsection{Some infinite (dilogarithm) identities}\label{example}
Applying Theorem \ref{thm:panttorusjump}, we compute the ortho trace spectrum for certain special surfaces and then express Basmajian's identity and Bridgeman's identity (see Equation \ref{equ:Bas&Bri}) in each case.

\begin{example}[A pair of pants $P(3,3,3)$] Let \(\textbf{v}_1 = \begin{pmatrix} -1 \\ 3 \\ 3 \end{pmatrix}\), \(\textbf{v}_2 = \begin{pmatrix} 3 \\ -1 \\ 3 \end{pmatrix}\), and \(\textbf{v}_3 = \begin{pmatrix} 3 \\ 3 \\ -1 \end{pmatrix}\). Let
$$G_{P(3,3,3)}:=\left\langle f_X = \begin{bmatrix}
    -1 & 3& 3\\
    0 & 1& 0\\
    0&0&1
\end{bmatrix},f_Y = \begin{bmatrix}
    1 & 0& 0\\
    3 & -1& 3\\
    0&0&1
\end{bmatrix}, f_Z = \begin{bmatrix}
    1 & 0 & 0\\
    0 & 1& 0\\
    3&3& -1
\end{bmatrix}\right \rangle .$$
By Theorem \ref{thm:panttorusjump}, the orbits \(G_{P(3,3,3)} \cdot \textbf{v}_i\), where \(i \in \{1,2,3\}\), provide the spectrum of traces of orthogeodesics on \(P(3,3,3)\). Figure \ref{orbitpants333} illustrates the orbit \(G_{P(3,3,3)} \cdot \textbf{v}_1\), with bold numbers indicating elements in the trace spectrum of orthogeodesics.

\begin{figure}[h]
\centering
\begin{tikzpicture}[node distance=0.1cm and 0.5cm]
\node (a) at (0,0) {$( -1, 3, 3 )$};
\node (b) [right=of a] {$(\textbf{19}, 3, 3 )$};
\node (bu) [above right=of b] {$( 19, \textbf{63}, 3 )$};
\node (bd) [below right=of b] {$( 19, 3, \textbf{63} )$};
\node (buu) [above right=of bu] {$( 19, 63, \textbf{243})\cdots$};
\node (bud) [right=of bu] {$(\textbf{179}, 63, 3 )\cdots$};
\node (bdu) [right=of bd] {$(\textbf{179}, 3, 63 )\cdots$};
\node (bdd) [below right=of bd] {$(19, \textbf{243}, 63 )\cdots$};

\draw[-] (a) --node[midway, above] {$f_X$} (b);
\draw[-] (b) --node[midway, above] {$f_Y$} (bu);
\draw[-] (b) --node[midway, above] {$f_Z$} (bd);
\draw[-] (bu) --node[midway, above] {$f_Z$} (buu);
\draw[-] (bu) --node[midway, below] {$f_X$} (bud);
\draw[-] (bd) --node[midway, above] {$f_X$} (bdu);
\draw[-] (bd) --node[midway, below] {$f_Y$} (bdd);

\draw[-] (a.150) to[out=120, in=150, looseness=8] node[midway, above left] {$f_Z$} (a.160);
\draw[-] (a.200) to[out=240, in=270, looseness=8] node[midway, below left] {$f_Y$} (a.210);

\end{tikzpicture}
\caption{Orbit of \((-1, 3, 3)\) under the action of \(G_{P(3,3,3)}\).}
\label{orbitpants333}
\end{figure}

Thus, the trace spectrum of orthogeodesics on $P(3,3,3)$ is as follows:
$$ 3, 3, 3, 19, 19, 19, 63, 63, 63, 63, 63, 63, 179, 179, 179, 179, 179, 179, 243, 243, 243, 243, 243, 243, $$
$$483, 483, 483, 483, 483, 483, 723, 723, 723, 723, 723, 723,723, 723, 723, 723, 723, 723,$$
$$ 899, 899, 899, 899, 899, 899,1279, 1279, 1279, 1279, 1279, 1279, \ldots$$

Basmajian's identity:
$$\left(\frac{1+\sqrt{5}}{2}\right)^2=2\left(\frac{10}{9}\right)\left(\frac{32}{31}\right)^2\left(\frac{90}{89}\right)^2\left(\frac{122}{121}\right)^2\left(\frac{242}{241}\right)^2\left(\frac{362}{361}\right)^4\left(\frac{450}{449}\right)^2\cdots$$

Bridgeman's identity:
$$\frac{\pi^2}{2}=3\mathcal{L}\left( \frac{1}{2}\right)+3\mathcal{L}\left( \frac{1}{10}\right)+6\mathcal{L}\left( \frac{1}{32}\right)+6\mathcal{L}\left( \frac{1}{90}\right)+6\mathcal{L}\left( \frac{1}{122}\right)+6\mathcal{L}\left( \frac{1}{242}\right)+12\mathcal{L}\left( \frac{1}{362}\right)+\cdots$$
\end{example} 
Similarly, we have the following identities:
\begin{example}[A pair of pants $P(2,2,2)$]
Basmajian's identity:
$$\left(\frac{1+\sqrt{3}}{2}\right)^2=\left(\frac{3}{2}\right)\left(\frac{18}{16}\right)\left(\frac{75}{73}\right)^2\left(\frac{288}{286}\right)^2\left(\frac{363}{361}\right)^2\left(\frac{1083}{1081}\right)^2\left(\frac{1443}{1441}\right)^4\left(\frac{1728}{1726}\right)^2\cdots $$
Bridgeman's identity:
$$\frac{\pi^2}{2}=3\mathcal{L}\left( \frac{2}{3}\right)+3\mathcal{L}\left( \frac{2}{18}\right)+6\mathcal{L}\left( \frac{2}{75}\right)+6\mathcal{L}\left( \frac{2}{288}\right)+6\mathcal{L}\left( \frac{2}{363}\right)+6\mathcal{L}\left( \frac{2}{1083}\right)+12\mathcal{L}\left( \frac{2}{1443}\right)+\cdots$$

\end{example} 

Note that dilogarithm identities in these above examples differ from those in \cite{bridgeman2021dilogarithm, jaipong2023dilogarithm}. Their terms are arranged over the set of complementary regions of a trivalent tree which can be associated with a Farey sequence as in McShane's identity and other identities \cite{hines2020infinite,hu2014new,tan2008generalized} involving the set of simple closed geodesics on a once-punctured torus. Figure \ref{a=b=c} illustrates the two cases. 

\begin{center}
\begin{minipage}{\linewidth}
\begin{center}
\includegraphics[width=0.7\linewidth]{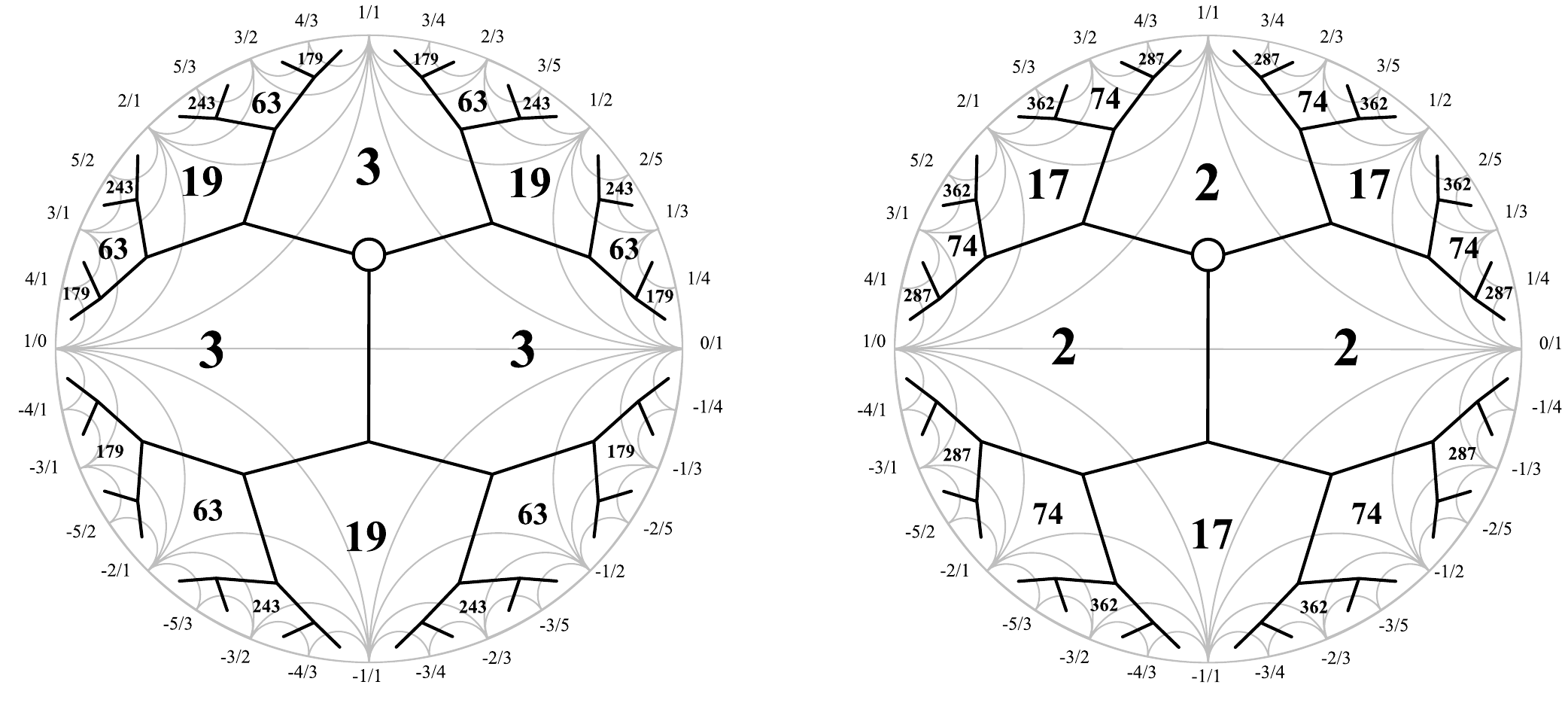}
\captionof{figure}{Examples: $(a,b,c)\in \{(2,2,2),(3,3,3)\}$.}
\label{a=b=c}
\end{center}
\end{minipage}
\end{center}

One can also investigate the set of one-holed tori with a regular orthobasis (see Proposition 4.3 in \cite{parlier2009simple}), then show that there are also two of them having the same ortho trace spectra as in the two examples above. 

\begin{example}[The one-holed torus $T(3,17,21)$]Refer to Example \ref{Example:2} for more details.

Basmajian's identity:
$$\left(\frac{1+\sqrt{5}}{2}\right)^2=\left(\frac{3}{2}\right)\left(\frac{10}{9}\right)\left(\frac{11}{10}\right)\left(\frac{19}{18}\right)\left(\frac{32}{31}\right)\left(\frac{41}{40}\right)\cdots $$
Bridgeman's identity:
$$\frac{\pi^2}{2}=\mathcal{L}\left( \frac{1}{2}\right)+\mathcal{L}\left( \frac{1}{9}\right)+2\mathcal{L}\left( \frac{1}{10}\right)+2\mathcal{L}\left( \frac{1}{11}\right)+2\mathcal{L}\left( \frac{1}{19}\right)+2\mathcal{L}\left( \frac{1}{32}\right)+2\mathcal{L}\left( \frac{1}{41}\right)+4\mathcal{L}\left( \frac{1}{90}\right)+2\mathcal{L}\left( \frac{1}{99}\right)+\cdots$$

\end{example} 

\subsection{A combinatorial proof of Basmajian's identity}\label{Bas}
Let $S$ be a hyperbolic surface of totally geodesic boundary, Basmajian's identity on $S$ is as follows:
$$ \ell(\partial{S})=\sum_{\eta}2\log(\coth(\ell(\eta)/2)),$$
where the sum runs over the set of orthogeodesics on the surface. This identity was proved by using elementary hyperbolic geometry and the fact that the limit set of a non-elementary second-kind Fuchsian group is of 1-dimensional measure zero. In this section, we will present a combinatorial proof of Basmajian's identity in the case of a hyperbolic surface with totally geodesic boundary. Our combinatorial setting will be as follows. 

Let $T_1$ be a rooted trivalent tree whose first vertex is of valence 1, and all other vertices are of valence 3. We visualize it by embedding $T_1$ in the lower half-plane with the root located at the origin (see Figure \ref{panttree}). Let $E(T_1)$ be the set of edges of $T_1$ where each edge is oriented outwards from the root. Let $\Omega(T_1)$ be the set of complementary regions of $T_1$. Let $\Phi : E(T_1) \sqcup \Omega(T_1) \to (1,\infty)$ be a map satisfying the following harmonic relation at any vertex except at the root of the tree:
\begin{equation}\label{harmonic}
    \arccosh\frac{x+YZ}{\sqrt{(Y^2-1)(Z^2-1)}} = \arccosh \frac{z+XY}{\sqrt{(X^2-1)(Y^2-1)}} + \arccosh \frac{y+XZ}{\sqrt{(X^2-1)(Z^2-1)}},
\end{equation}
where $x, y, z, X, Y, Z$ abbreviate $\Phi(x), \Phi(y), \Phi(z), \Phi(X), \Phi(Y), \Phi(Z)$ respectively, and they are as depicted in Figure \ref{vertex}.
Note that the geometric meaning of this harmonic relation comes from Lemma \ref{harmonic}, in which the function $\Phi$ is the cosh length function. By a slight abuse of notation, we will use $u$ as an abbreviation for $\Phi(u)$ for any $u \in E(T_1) \sqcup \Omega(T_1)$. We define a function $h$ on $\Omega(T_1)$ as follows:
$$h(X):=\arccosh\left(\frac{X}{\sqrt{X^2-1}}\right)=\frac{1}{2}\log\left(\frac{X+1}{X-1}\right).$$
A triple $(x,Y,Z)\in E(T_1)\times \Omega(T_1) \times \Omega(T_1)$ is called an edge region triple if $x=Y\cap Z$ (i.e. $Y,Z \in \Omega(T_1)$ are two neighboring complementary regions of $x \in E(T_1)$). We define a function $f$ on the set of edge region triples as follows:
$$f(x,Y,Z):=\arccosh\left(\frac{x+YZ}{\sqrt{(Y^2-1)(Z^2-1)}}\right).$$
Note that $\lim\limits_{Y\to \infty}f(y,X,Y)=\lim\limits_{Z\to \infty}f(z,X,Z)=h(X).$ The harmonic relation can be rewritten as:
 $$f(x,Y,Z)=f(y,X,Z)+f(z,X,Y),$$
for all $X,Y,Z,x,y,z$ as in Figure \ref{vertex}. Thus we have a Green formula:
$$\sum_{x \in C_n}f(x,Y,Z)=f(x_0,Y_0,Z_0),$$ where $(x_0,Y_0,Z_0)$ is the initial edge region triple (i.e., $x_0 \in E(T_1)$ is the initial edge from the root, $Y_0, Z_0 \in \Omega(T_1)$ are two neighboring complementary regions of $x_0$), and $C_n$ is the set of edges at combinatorial distance $n \in \mathbb{N}$ from the root. By Theorem \ref{lem:labeling}, we can state Basmajian's identity in a purely combinatorial manner as follows:

\begin{thm}\label{Basmaj}(Basmajian's identity for $T_1$)  If  $\sup\{\Phi(x) | x \in E(T_1) \}<\infty$, then
$$\sum_{X\in \Omega(T_1)}2h(X)=h(Y_0)+h(Z_0)+f(x_0,Y_0,Z_0).$$
or equivalently,
$$\sum_{X\in \Omega(T_1)}\log\left(\frac{X+1}{X-1}\right)=\frac{1}{2}\log\left(\frac{Y_0+1}{Y_0-1}\right)+\frac{1}{2}\log\left(\frac{Z_0+1}{Z_0-1}\right)+\arccosh\left(\frac{x_0+Y_0Z_0}{\sqrt{(Y_0^2-1)(Z_0^2-1)}}\right),$$
where $(x_0,Y_0,Z_0)$ is the initial edge region triple of the tree. Note that $x_0,X,Y_0,Z_0$ are the abbreviations of $\Phi(x_0),\Phi(X),\Phi(Y_0),\Phi(Z_0)$ respectively.
\end{thm}
\begin{proof}
The proof will follow closely to that of Bowditch for the McShane's identity. Firstly, it is easy to show that $f(x,Y,Z) \geq h(Y)+h(Z)$ for any edge region triple $(x,Y,Z)$. Then by using the Green formula, one obtains:
\begin{equation}\label{eq01}\sum_{X\in \Omega(T_1)}2h(X) \leq h(Y_0)+h(Z_0)+f(x_0,Y_0,Z_0).\end{equation}
Let $\mu := \sup\{\Phi(x) | x \in E(T_1) \}$, then we will show that
\begin{equation}\label{eq02}f(x,Y,Z) \leq h(Y)+x.h(Z) \leq h(Y)+\mu.h(Z),\end{equation} for any edge region triple $(x,Y,Z)$. Indeed, we define 
$$F(x):= h(Y) + x.h(Z) - f(x,Y,Z),  $$ then compute $F'(x)$ and $F''(x)$, in particular
$$F''(x)=\frac{x+YZ}{(x^2+Y^2+Z^2+2xYZ-1)^{3/2}} >0,$$ for all $x,Y,Z >1$. It implies that $F'(x)>F'(1)$. Note that $$F'(x)=\log \left(\frac{Z+1}{\sqrt{Z^2-1}} \right)-\frac{1}{\sqrt{x^2+Y^2+Z^2+2xYZ-1}},$$ so $$F'(x)>F'(1)=\frac{1}{2}\log \left(\frac{Z+1}{Z-1} \right)-\frac{1}{Y+Z}>\frac{1}{2}\log \left(\frac{Z+1}{Z-1} \right)-\frac{1}{Z+1}>0,$$ for all $x,Y,Z>1$. Hence $F(x)>F(1)=0$.

By combining Inequality \ref{eq02} and the Green formula we have:
$$f(x_0,Y_0,Z_0)=\sum_{x \in C_n}f(x,Y,Z)\leq \sum_{X\in \Omega_n}2h(X) -h(Y_0)-h(Z_0)+2\mu \sum_{X\in \Omega_{n+1}\backslash\Omega_{n}}2h(X),$$ for all $n\in \mathbb{N}$ where $\Omega_n$ is the set of complementary regions at combinatorial distance at most $n$ from the root. Let $n \to \infty$. Note that $\sum\limits_{X\in \Omega_{n+1} \backslash\Omega_{n}}2h(X)$ tends to 0 due to Inequality \ref{eq01}. One has \begin{equation}\label{eq03}f(x_0,Y_0,Z_0)\leq \sum_{X\in \Omega(T_1)}2h(X) -h(Y_0)-h(Z_0).\end{equation}
Finally, Basmajian's identity follows from Inequalities \ref{eq01} and \ref{eq03}.
\end{proof}
As a consequence, one can express a combinatorial form of Basmajian's identity for the set of oriented orthogeodesics starting from a simple closed geodesic, say $\alpha$, at the boundary of a hyperbolic surface, assuming that $\alpha$ is divided into $n$ subsegments by an orthobasis. Note that the finite sum on the right-hand side of Equation \ref{Basmajian1} is a combinatorial form of the length of $\alpha$.
\begin{cor}(Basmajian's identity for $T_n$)
Let $T_n$ be a rooted trivalent tree with $n$ edges starting from the root. If  $\sup\{\Phi(x) | x \in E(T_n) \}<\infty$, then 
\begin{equation}\label{Basmajian1}\sum_{X\in \Omega(T_n)}\log\left(\frac{X+1}{X-1}\right)=\sum_{k=1}^n \arccosh\left(\frac{x_{0,k}+Y_{0,k}Z_{0,k}}{\sqrt{(Y_{0,k}^2-1)(Z_{0,k}^2-1)}}\right).\end{equation}
where $(x_{0,k},Y_{0,k},Z_{0,k})$'s are edge region triples at the root of the tree. Note that $Z_{0,k}=Y_{0,k+1}$, with $k\in\{1,2,...,n\}$, and  $Y_{0,n+1}:=Y_{0,1}$.
\end{cor}
\textbf{Remarks.} 
\bi
\item The condition $\sup\{\Phi(x) \mid x \in E\} < \infty$ always holds for finite-type surfaces since the edges are labeled by a finite set, an orthobasis. If the surface is of infinite type with an unbounded hexagonal decomposition, i.e., $\sup\{\Phi(x) \mid x \in E\} = \infty$, then the proof in Theorem \ref{Basmaj} no longer applies. However, it does apply to the flute surface since there exists a bounded hexagonal decomposition for such a surface.

\item By adapting suitable harmonic relations (see the remark after Lemma \ref{computem}), one can extend this result to the general case in which the boundary of the surface consists of cusps and at least one simple closed geodesic. We suspect that this method can be generalized to higher dimensions.

\item In the proof of Theorem \ref{Basmaj}, we need two necessary inequalities: $h(Y)+h(Z) \leq f(x,Y,Z)$ and $f(x,Y,Z)\leq h(Y)+x.h(Z).$ It is not difficult to see the geometric meaning of the first inequality. However, the second one seems unnatural.
\ei

\section{A construction of e-orthoshapes using reflection involutions}\label{reflection}
 In this section, we will use reflection involutions on immersed pairs of pants to construct a family of e-ortho-isosceles-trapezoids, e-orthorectangles, and e-orthokites on a topological surface with punctures and negative Euler characteristic.  
 
 Let $P$ be a pair of pants with boundary components $o_1,o_2,o_3$. Let $a,b,c,A,B,C$ be simple orthogeodesics connecting two elements in each of pairs $(o_2,o_3)$, $(o_1,o_3)$, $(o_1,o_2)$, $(o_1,o_1)$, $(o_2,o_2)$, $(o_3,o_3)$ respectively. The three simple orthogeodesics $a,b,c$ are also called the \textbf{seams} of $P$. Let $r$ be the unique orientation-reversing isometry over $P$ which fixes $a,b,c$ pointwisely. 
 
 If $\ell(o_1)$, $\ell(o_2)$ and $\ell(o_3)$ are pairwise different, then the isometry group of $P$ is $\{id,r\}$ with $r^2=id$.  
 
 Let $\{i,j,k\}=\{1,2,3\}$. If $\ell(o_i)=\ell(o_j)$, let $r_k$ be the unique orientation-reserving isometry over $P$ which fixes $o_k$ and interchanges $o_i$ with $o_j$. The set of fixed points of each of isometries $r_1,r_2,r_3$ forms each of orthogeodesics $A,B,C$ respectively.
 
 If $\ell(o_i)=\ell(o_j) \neq \ell(o_k)$, the isometry group of $P$ will be $\{id, r_k, r, r\circ r_k\}$ with $r^2=r_k^2=(r\circ r_k)^2=id$ and $r\circ r_k=r_k\circ r$. 

 A \textbf{reflection involution} $i$ on $P$ is an orientation-reversing isometry on $P$ that fixes a simple orthogeodesic pointwisely (the ``symmetry axis'' of $i$). For instance, $r$ is a reflection involution on $P$ with symmetry axes $a,b,c$. If $\ell(o_i)=\ell(o_j)$, then $r_k$ is another reflection involution on $P$. 
 
 For a comprehensive treatment of pairs of pants, refer to chapter 3 of Buser's book \cite{buser2010geometry}.

Let $\Sigma$ be an oriented topological surface with punctures and negative Euler characteristic. A point in $\mathcal{T}(\Sigma)$ can be represented by a hyperbolic surface, namely $S$, with $\partial{S}$ consisting of simple closed geodesics and/or cusps such that the interior of $S$ is homeomorphic to $\Sigma$. Our main result in this section is the following theorem:
\begin{thm}\label{thm:reflection}
The involution reflections on an immersed pair of pants yield infinitely many e-ortho-isosceles-trapezoids, e-orthorectangles, and e-orthokites on \( S \). However, there are no e-orthosquares on \( S \).
\end{thm}

\begin{proof}

The key point is to consider an orthogeodesic $\gamma$ and an immersed pair of pants $P$ with $\gamma$ as one of its seams. By lifting this pair of pants $P$ to the universal cover (identified with $\mathbb{H}$), we obtain a fundamental domain for $P$ consisting of two right-angled hexagons with a reflection symmetry $r$ about the lift of $\gamma$. The connected component of the lift of $P$ containing this fundamental domain in $\mathbb{H}$ (denoted $\overline{P}$) will also be symmetric with respect to this reflection and is tessellated by isometric right-angled hexagons.

Considering any orthogeodesic in $\overline{P}$ (a geodesic arc connecting two boundaries of $\overline{P}$, which are lifts of boundary components of $S$), its reflection gives rise to the desired e-orthoshapes. It can be seen that these e-orthoshapes can be chosen to project to infinitely many different e-orthoshapes. In the case of e-orthorectangles, by starting with an orthogeodesic from a boundary component to itself, one can show that $\overline{P}$ is symmetric with respect to reflections on a pair of orthogonal geodesics.

Now we prove that there are no e-orthosquares on \( S \) by contradiction. Suppose that there exists $\mathbb{O}:=XYZT$ an e-orthosquare on $S$. We consider a hyperbolic structure $S^{*}\in \mathcal{T}(\Sigma)$ which satisfies:
\vspace{-3mm}
\begin{itemize}
\item $S^{*}$ is a cusped surface.
\item There is a truncated orthobasis (i.e. a decorated ideal triangulation) on $S^{*}$ including only orthogeodesics of lambda length 1.
\end{itemize} 
Note that if we lift this decorated ideal triangulation of $S^*$ to $\mathbb{H}$, we obtain the Farey tessellation with Ford circles. Also note that by Penner's Ptolemy relation, the tree of lambda lengths of orthogeodesics on $S^*$ is the tree of the Fibonacci function (see \cite{bowditch1998markoff} for a definition). Thus the lambda length of any orthogeodesics on $S^*$ is always an integer (also see \cite{penner2023music} for a direct computation on the Farey decoration). It is worth mentioning that the surface \( S^{*} \) is actually isometric to \(\mathbb{H}/G\), where \( G \) is a subgroup of the modular group \(\mathrm{PSL}(2,\mathbb{Z})\).

 Again by Penner's Ptolemy relation (see Equation \ref{PennerPto} and Notation \ref{3.2b}): $$\lambda(X,Z)\lambda(Y,T)=\lambda(X,Y)\lambda(Z,T)+\lambda(X,T)\lambda(Y,Z).$$ Thus $\lambda(X,Z)=\lambda(X,Y) \sqrt{2}$ since $\mathbb{O}$ is an orthosquare on $S^{*}$. It contradicts the fact that the lambda length of any orthogeodesic on $S^*$ is always an integer.
\end{proof}

\textbf{Remarks.} The existence of these types of
e-orthoshapes, particularly 
e-orthorectangles, on surfaces greatly simplifies the inter-root flipping for certain subsets of triples of orthogeodesics. Notably, this explains why there are quadruples of orthogeodesics $(X_1, Y, Z, X_2)$ on surfaces that always satisfy the trace identity: $$X_1+X_2=Y+Z$$ regardless of the hyperbolic structure of the surface. This insight is also useful for identifying integral suborbits of triples of orthogeodesics.


\section{Appendix: Ptolemy relations and their generalizations}\label{appendix}
This section presents several relations between the distances from a geodesic or a horocycle to a finite set of geodesics and/or horocycles in \(\mathbb{H}\). Some of these relations resemble those between a point and a finite set of points in the Euclidean plane. For example: there are Ptolemy relations and in particular, the orthorectangle and ortho-isosceles trapezoid relations are identical to the relation of the distances from a point to four vertices of a rectangle and an isosceles trapezoid in the Euclidean plane. These relations can also be expressed in terms of cross-ratios. For hyperbolic trigonometry formulae used in this section, we refer the reader to \cite{buser2010geometry} (Chapter 2) and \cite{beardon2012geometry} (Chapter 7).

\subsection{Penner's Ptolemy relation}
Let $A,B,C,P$ be four disjoint horocycles in $\mathbb{H}$. Each of them divides $\mathbb{H}$ into two domains such that the other three horocycles lie in the same domain. Following the notation in Item \ref{3.2c} of Section \ref{Notations}, we denote $\overline{X}:=\frac{1}{2}e^{\frac{1}{2}d_\mathbb{H}(P,A)}$, $\overline{Y}:=\frac{1}{2}e^{\frac{1}{2}d_\mathbb{H}(P,B)}$, $\overline{Z}:=\frac{1}{2}e^{\frac{1}{2}d_\mathbb{H}(P,C)}$, $\overline{x}:=\frac{1}{2}e^{\frac{1}{2}d_\mathbb{H}(B,C)}$, $\overline{y}:=\frac{1}{2}e^{\frac{1}{2}d_\mathbb{H}(A,C)}$, $\overline{z}:=\frac{1}{2}e^{\frac{1}{2}d_\mathbb{H}(A,B)}$.
If $P,B,A,C$ are in a cyclic order as in Figure \ref{horo}, then the Penner's Ptolemy relation (see \cite{penner1987decorated,penner2012decorated}) is \begin{equation}\label{PennerPto}\overline{X}=\frac{\overline{y}\overline{Y}+\overline{z}\overline{Z}}{\overline{x}}.
\end{equation}
This equation is equivalent to the harmonic relation: 
$$\frac{\overline{x}}{\overline{Y}.\overline{Z}}=\frac{\overline{y}}{\overline{X}.\overline{Z}}+\frac{\overline{z}}{\overline{X}.\overline{Y}},$$ 
in which {\small$\frac{\overline{x}}{\overline{Y}.\overline{Z}}; \frac{\overline{y}}{\overline{X}.\overline{Z}}; \frac{\overline{z}}{\overline{X}.\overline{Y}}$} are respectively the lengths of segments on $P$ between $PB$ and $PC$; $PA$ and $PC$; $PA$ and $PB$. Now, let's consider the case where all horocycles are replaced by geodesics.

\subsection{Ptolemy relation of geodesics}
Let $A,B,C,P$ be four disjoint geodesics in $\mathbb{H}$. Each of them divides $\mathbb{H}$ into two domains such that the other three geodesics lie in the same domain. Following the notation in Item \ref{3.1a} of Section \ref{Notations}, we denote $X:=\cosh d_\mathbb{H}(P,A)$, $Y:=\cosh d_\mathbb{H}(P,B)$, $Z:=\cosh d_\mathbb{H}(P,C)$, $x:=\cosh d_\mathbb{H}(B,C)$, $y:=\cosh d_\mathbb{H}(A,C)$, $z:=\cosh d_\mathbb{H}(A,B)$.

\begin{figure}[htbp]
    \centering
    \begin{minipage}[t]{0.45\linewidth}
        \centering
        \setlength{\unitlength}{\linewidth}
        \begin{picture}(0,0)
        \end{picture}
        \includegraphics[width=0.55\linewidth]{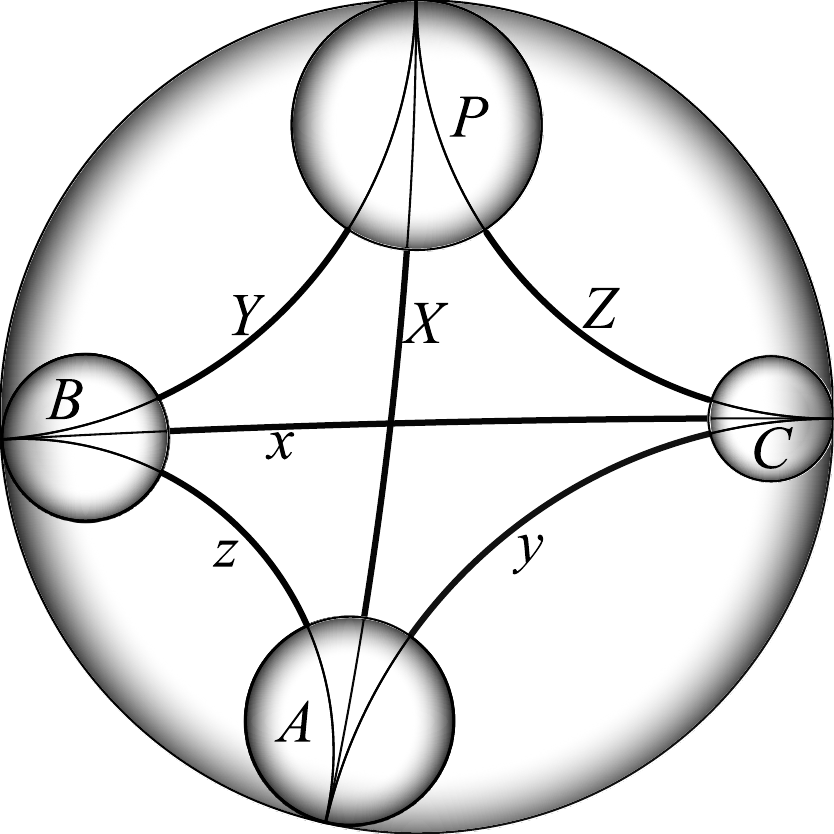}
        \caption{Penner's Ptolemy relation.}
        \label{horo} 
    \end{minipage}
    \hspace{0.05\linewidth} 
    \begin{minipage}[t]{0.45\linewidth}
        \centering
        \includegraphics[width=0.57\linewidth]{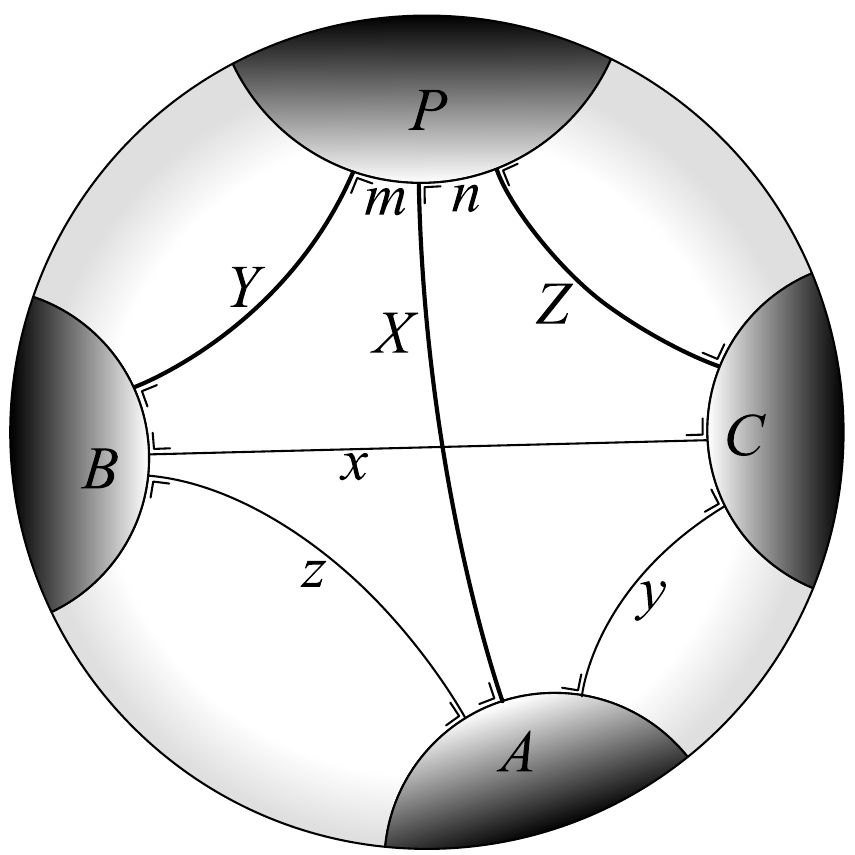}
        \captionsetup{width=1\linewidth} 
        \caption{Ptolemy relation of geodesics.} 
        \label{geo} 
    \end{minipage}
\end{figure}

\begin{lem}\label{harmonic} If $B,A,C,P$ are in a cyclic order as in Figure \ref{geo} then one has a harmonic relation as follows:
$$\arccosh\frac{x+YZ}{\sqrt{(Y^2-1)(Z^2-1)}}=\arccosh \frac{z+XY}{\sqrt{(X^2-1)(Y^2-1)}}+\arccosh \frac{y+XZ}{\sqrt{(X^2-1)(Z^2-1)}}.$$
\end{lem}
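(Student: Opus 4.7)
The approach is geometric: each of the three $\arccosh$ quantities is reinterpreted as the distance along $P$ between the feet of two common perpendiculars. For $W\in\{A,B,C\}$, write $P_W$ for the foot on $P$ of the common perpendicular between $P$ and $W$.

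The first step is to apply the right-angled hyperbolic hexagon formula (e.g.\ Buser, Thm.~2.4.1) to the convex right-angled hexagon cut out by the three pairwise disjoint geodesics $P,B,C$ and their three common perpendiculars; one side of this hexagon lies on $P$ and is exactly the subsegment from $P_B$ to $P_C$. The formula yields
$$\cosh d_{\mathbb{H}}(P_B,P_C)\,\sinh d_{\mathbb{H}}(P,B)\,\sinh d_{\mathbb{H}}(P,C)=\cosh d_{\mathbb{H}}(B,C)+\cosh d_{\mathbb{H}}(P,B)\,\cosh d_{\mathbb{H}}(P,C),$$
which in the $(X,Y,Z,x,y,z)$ shorthand reads $\cosh d_{\mathbb{H}}(P_B,P_C)=\frac{x+YZ}{\sqrt{(Y^2-1)(Z^2-1)}}$. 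Applying the same hexagon identity to the geodesic triples $(P,A,B)$ and $(P,A,C)$ identifies $d_{\mathbb{H}}(P_A,P_B)$ and $d_{\mathbb{H}}(P_A,P_C)$ with the remaining two $\arccosh$ terms in the statement.

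The second step is to promote the cyclic order $B,A,C,P$ on $\partial\mathbb{H}$ to a linear order of the feet on $P$, concretely showing that $P_A$ lies strictly between $P_B$ and $P_C$. I would normalize by an isometry so that $P$ is the positive imaginary axis with $A,B,C$ all on the right-hand side; each of $A,B,C$ is then a Euclidean semicircle with center $c_W>0$, radius $r_W<c_W$, and a direct computation gives $P_W=i\sqrt{c_W^2-r_W^2}$. The cyclic condition forces the eight boundary points to interleave so that, after a suitable labeling, $b_1<b_2<a_1<a_2<c_1<c_2$ on the positive real axis; since all six quantities are positive, $b_1b_2<a_1a_2<c_1c_2$, i.e.\ $c_B^2-r_B^2<c_A^2-r_A^2<c_C^2-r_C^2$. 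Hence $P_B,P_A,P_C$ appear on $P$ in this order and
$$d_{\mathbb{H}}(P_B,P_C)=d_{\mathbb{H}}(P_B,P_A)+d_{\mathbb{H}}(P_A,P_C).$$
Taking $\arccosh$ of the three hexagon identities from Step~1 and inserting them into this additive relation gives the harmonic identity of the lemma.

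The main obstacle is the cyclic-to-linear order translation in Step~2: it is pictorially transparent but requires a careful normalization and a small check that the hexagon really is convex (so that the side of the hexagon on $P$ is the segment $P_BP_C$ and not its complement). Once this is in place, the rest is a direct application of the right-angled hexagon law and a single additivity of lengths along $P$.
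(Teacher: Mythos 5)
Your proposal is correct and follows essentially the same route as the paper: the paper's proof also computes the lengths $m$, $n$ and $m+n$ of the segments on $P$ between the feet of the common perpendiculars via the right-angled hexagon formula, with the cyclic order supplying the additivity $d(P_B,P_C)=d(P_B,P_A)+d(P_A,P_C)$. Your Step 2 merely makes explicit (via the upper half-plane normalization $P_W=i\sqrt{c_W^2-r_W^2}$) the ordering of the feet that the paper reads off from Figure \ref{geo}.
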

\begin{proof}
Let $m,n$ be two segments respectively between $X,Y$ and $X,Z$ on $P$. The harmonic relation follows from computing the length of $m$, $n$, and $m+n$ by using the hyperbolic trigonometric formula for right-angled hexagons.
\end{proof}

\begin{lem}\label{recursive}(Ptolemy relation of geodesics)
If $B,A,C,P$ are in a cyclic order as in Figure \ref{geo}, then
 $$X=\frac{(xy+z)Y+(xz+y)Z+\mathcal{P}(x,y,z)\mathcal{P}(x,Y,Z)}{x^2-1},$$
 where $\mathcal{P}(a,b,c):=\sqrt{a^2+b^2+c^2+2abc-1}$ for any $a,b,c$.
\end{lem}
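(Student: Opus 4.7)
The plan is to exploit the freedom in how the four geodesics are labelled in Lemma \ref{harmonic}: applying that lemma from $B$'s viewpoint rather than from $P$'s makes $X$ appear only linearly, bypassing the quadratic algebra and sign ambiguity one would face otherwise.

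Since the cyclic order $B,A,C,P$ is cyclically the same as $A,C,P,B$, one can apply Lemma \ref{harmonic} to the relabelled quadruple with $B$ playing the role of the ``viewpoint'' geodesic (and the middle geodesic now being $C$). Equivalently, under the substitution $X'=x,\ Y'=z,\ Z'=Y,\ x'=X,\ y'=Z,\ z'=y$ in the lemma's identity, one obtains
\begin{equation*}
\arccosh\frac{X+zY}{\sqrt{(z^2-1)(Y^2-1)}}=\arccosh\frac{y+xz}{\sqrt{(x^2-1)(z^2-1)}}+\arccosh\frac{Z+xY}{\sqrt{(x^2-1)(Y^2-1)}}.
\end{equation*}
Geometrically this asserts that the segment of $B$ between the feet of the perpendiculars to $A$ and to $P$ is the concatenation of the two subsegments cut off by the foot of the perpendicular from $B$ to the middle geodesic $C$. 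Crucially, $X$ appears only on the left, and only linearly in the numerator.

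Writing the identity as $\alpha=\beta+\gamma$ and taking $\cosh$ of both sides, one uses $\cosh\alpha=\cosh\beta\cosh\gamma+\sinh\beta\sinh\gamma$. The $\sinh$ factors simplify via the algebraic identity $(u+vw)^2-(v^2-1)(w^2-1)=u^2+v^2+w^2+2uvw-1=\mathcal{P}(u,v,w)^2$, yielding $\sinh\beta=\mathcal{P}(x,y,z)/\sqrt{(x^2-1)(z^2-1)}$ and $\sinh\gamma=\mathcal{P}(x,Y,Z)/\sqrt{(x^2-1)(Y^2-1)}$. Clearing the common denominator $(x^2-1)\sqrt{(z^2-1)(Y^2-1)}$ reduces the identity to
\begin{equation*}
(x^2-1)(X+zY)=(y+xz)(Z+xY)+\mathcal{P}(x,y,z)\mathcal{P}(x,Y,Z).
\end{equation*}

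Expanding $(y+xz)(Z+xY)=yZ+xyY+xzZ+x^2zY$ makes the $x^2zY$ on the right cancel against the corresponding term from the left, and the remaining $-zY$ regroups with $yZ+xyY+xzZ$ into $(xy+z)Y+(xz+y)Z$; dividing by $x^2-1$ yields the claimed formula for $X$. The main obstacle, really only conceptual, is recognising the correct viewpoint from which to apply Lemma \ref{harmonic}: applied from $P$ or from $A$ the identity is quadratic in $X$ under the square roots and one is forced into a squaring argument with a sign-choice at the end, whereas applying it from $B$ (or symmetrically from $C$) makes the identity linear in $X$ and the derivation reduces to a brief algebraic simplification.
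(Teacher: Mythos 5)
Your proof is correct, but it takes a genuinely different route from the paper's. The paper applies Lemma \ref{harmonic} exactly as stated, i.e.\ along $P$: taking $\cosh$ of both sides yields $(x+YZ)(X^2-1)-(z+XY)(y+XZ)=\mathcal{P}(X,Y,z)\mathcal{P}(X,y,Z)$, an equation in which the unknown $X$ occurs quadratically (and under the radicals), and the recursive formula is then obtained by solving this equation for $X$ under the constraints $X,Y,Z,x,y,z>1$ --- a squaring step with a root selection that the paper leaves to the reader. You instead exploit the rotation invariance of the hypothesis: reading the cyclic order $B,A,C,P$ as $A,C,P,B$ and applying the harmonic relation along $B$ (with $C$ as the middle geodesic), the quantity $X=\cosh d_{\mathbb{H}}(P,A)$ enters only as the ``opposite side'' term, so after the $\cosh$-addition formula and the identity $(u+vw)^2-(v^2-1)(w^2-1)=\mathcal{P}(u,v,w)^2$ the relation is linear in $X$ and the formula drops out with no squaring and no sign ambiguity; your substitution dictionary, the resulting displayed identity, and the final regrouping are all correct (the feet on $B$ of the common perpendiculars to $A,C,P$ do occur in that order, so the relabelled lemma applies, and by the same symmetry the argument also works from $C$, as you note). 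What each approach buys: yours is cleaner and determines the correct root automatically; the paper's, though clumsier here, produces the intermediate Equation \ref{eq1}, which is precisely what is squared immediately afterwards to obtain the symmetric quadruplet relation of Corollary \ref{triangle}, so it is not wasted work in the paper's overall development.
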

\begin{proof}
Since $\cosh(m+n)=\cosh(m)\cosh(n)+\sinh(m)\sinh(n)$,
  $$\frac{x+YZ}{\sqrt{(Y^2-1)(Z^2-1)}}=\frac{z+XY}{\sqrt{(X^2-1)(Y^2-1)}}.\frac{y+XZ}{\sqrt{(X^2-1)(Z^2-1)}}$$
  \begin{align*}
 & +\sqrt{\left(\left(\frac{z+XY}{\sqrt{(X^2-1)(Y^2-1)}}\right)^2-1\right)\left(\left(\frac{y+XZ}{\sqrt{(X^2-1)(Z^2-1)}}\right)^2-1\right)}.
 \end{align*}
Equivalently, \begin{equation}\label{eq1}(x+YZ)(X^2-1)-(z+XY)(y+XZ)=\mathcal{P}(X,Y,z)\mathcal{P}(X,y,Z).\end{equation}
Under the condition $X,Y,Z,x,y,z>1$, Equation \ref{eq1} can be solved to obtain the formula of $X$ in terms of $x,y,z,Y,Z$ as desired.
\end{proof}
\begin{cor}\label{triangle}(Quadruplet of geodesics) The following equation holds for any order of $P,A,B,C$:
\begin{equation*}
    \label{triangleq}
(x^2-1)X^2+(y^2-1)Y^2+(z^2-1)Z^2-2(xy+z)XY-2(yz+x)YZ-2(xz+y)XZ
=x^2+y^2+z^2+2xyz-1.
\end{equation*}

This equation can be expressed in a matrix form: $$\textbf{x}^TM\textbf{x}=\sqrt{-\det(M)},$$
where 
$$M=M(x,y,z):=\begin{bmatrix}
 x^2-1& -xy-z & -xz-y\\
-yx-z & y^2-1 & -yz-x\\
-zx-y&-zy-x&z^2-1
\end{bmatrix} \text{ and } \textbf{x}:=\begin{bmatrix}X\\Y\\Z\end{bmatrix}.$$

 
\end{cor}
\begin{proof} If $B,A,C,P$ are in a cyclic order as in Figure \ref{geo}, one has Equation \ref{eq1}. Taking the square of both sides of Equation \ref{eq1} and simplifying the result, one obtains the equation as desired. Since this equation is symmetric in term of $X,Y,Z$ and $x,y,z$, it holds for any order of $A,B,C,P$. 
\end{proof}
\textbf{Remark.} 
\bi
\item This equation has two roots in the variable $X$. If $B,A,C,P$ are in a cyclic order, $$X=\frac{(xy+z)Y+(xz+y)Z+\mathcal{P}(x,y,z)\mathcal{P}(x,Y,Z)}{x^2-1}.$$
Otherwise,
 $$X=\frac{(xy+z)Y+(xz+y)Z-\mathcal{P}(x,y,z)\mathcal{P}(x,Y,Z)}{x^2-1}.$$
\item $(-1,z,y),(z,-1,x)$, and $(y,x,-1)$ are (fundamental) solutions of this equation.
\ei
\subsection{Mixed Ptolemy relations}\label{mixedPtolemy}
Let $P,A,B$ be three disjoint geodesics and horocycles in $\mathbb{H}$. Each of them divides $\mathbb{H}$ into two domains such that the other two lie in the same domain. We denote by $PA,PB,AB$ the three shortest geodesic arcs connecting $P$ and $A$, $P$ and $B$, $A$ and $B$, respectively. Let $m$ be the length of the segment between $PA,PB$ on $P$. With the notations in Items \ref{3.1a} and \ref{3.2a} of Section \ref{Notations}, using standard calculations in the upper half-plane model, one can compute $m$ in terms of $PA,PB$ and $AB$.
\begin{lem}\label{computem}
\begin{itemize}
\item If $P,A,B$ are horocycles, then from \cite{penner1987decorated}: $m=\sqrt{\dfrac{AB}{2.PA.PB}}.$
\item If $P,A$ are horocycles, and $B$ is a geodesic, then: $m=\sqrt{\dfrac{1}{4.PB^2}+\dfrac{AB}{2.PA.PB}}.$
\item If $P$ is a horocycle, and $A, B$ are geodesics, then: $m=\sqrt{\dfrac{1}{4.PA^2}+\dfrac{1}{4.PB^2}+\dfrac{AB}{2.PA.PB}}.$
\item If $P$ is a geodesic, and $A, B$ are horocycles, then: $m=\arccosh\left(1+\dfrac{AB}{PA.PB}\right).$
\item If $P,A$ are geodesics, and $B$ is a horocycle, then: $m=\arccosh\left(\dfrac{PA.PB+AB}{PB\sqrt{PA^2-1}}\right).$
\end{itemize}
\end{lem}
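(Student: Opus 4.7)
The plan is to verify each of the five formulas separately by working in the upper half-plane model with $P$ placed in a convenient normal form. When $P$ is a horocycle I will normalize it to be the horizontal line $y=1$, so that for any disjoint horocycle or geodesic $A$ the common perpendicular from $A$ to $P$ is the vertical line through the ``top'' of $A$ (or the Euclidean center of the semicircle $A$), and $PA$ becomes an explicit point $(x_A,1)\in P$. When $P$ is a geodesic I will normalize it to be the imaginary axis; then the common perpendicular from any disjoint $A$ is the Euclidean semicircle centered at the origin passing through the foot $PA$, which sits on the imaginary axis.

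With these normalizations, the arclength on $P$ between $PA$ and $PB$ is computed directly: $m=|x_A-x_B|$ in the horocycle case (the hyperbolic metric on $y=1$ coincides with the Euclidean one) and $m=|\log(y_A/y_B)|$ in the geodesic case (standard parametrization of the imaginary axis). The key step is to express the Euclidean parameters of $A$ and $B$ — tangent point and Euclidean radius for a horocycle, ideal endpoints for a geodesic — in terms of $PA$ and $PB$ via the conversions of Section \ref{Notations}. For example, in the all-horocycle case the relation $d_\mathbb{H}(P,A)=\log(1/(2r_A))$ gives $r_A=1/(4\,PA)$, and in the ``$P$ geodesic, $A$ horocycle'' case the analogous computation yields $r_A=x_A/(2\,PA)$.

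Once the Euclidean parameters are fixed, $AB$ is computed directly using the point-distance formula $\cosh d_\mathbb{H}(z,w)=1+\tfrac{|z-w|^2}{2\,\mathrm{Im}(z)\,\mathrm{Im}(w)}$ together with Penner's identity $\lambda(A,B)=|a-b|/\sqrt{h_Ah_B}$ for two horocycles, and its analogues when a horocycle is replaced by a geodesic. Substituting the expressions for the Euclidean parameters in terms of $PA$, $PB$, $m$ and solving for $m$ yields the claimed formula in each of the five cases. The horocycle-horocycle case will be done first, since it is Penner's identity and provides a template; the four remaining cases follow the same template with the conversion dictated by which of $A,B$ is a geodesic.

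The main obstacle here is not conceptual but bookkeeping: the symbols $PA$, $PB$, $AB$ in the five statements stand for systematically different transforms of the underlying hyperbolic distances (either $\cosh\ell$, $\cosh(\ell/2)$, or $\tfrac12 e^{d}$) depending on whether each endpoint is a geodesic or a horocycle, and one must track the right conversion in each configuration. A secondary issue is checking the disjointness and relative-position conventions (so that the common perpendicular really is the shortest arc, and so the $\arccosh$ in cases 4 and 5 is well-defined); these conditions are built into the hypothesis that the three objects pairwise bound disjoint half-planes of $\mathbb{H}$.
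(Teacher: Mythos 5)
Your plan is correct and is essentially the paper's own argument: the paper offers no detailed proof beyond invoking ``standard calculations in the upper half-plane model'' with the conversions of Section \ref{Notations}, which is exactly what you carry out (and your quoted conversions, e.g.\ $r_A=1/(4\,PA)$ for $P$ the horocycle $y=1$ and $r_A=x_A/(2\,PA)$ for $P$ the imaginary axis, are the right ones). The only point to make explicit when writing it up is the same-side/cyclic-order hypothesis you already flag, which rules out configurations (e.g.\ horocycles on opposite sides of a geodesic $P$) where the formulas would fail.
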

\textbf{Remark.} Using these formulae for $m$ and the argument in Lemma \ref{harmonic}, one can establish different forms of harmonic relation depending on whether $P,A,B,C$ are horocycles and/or geodesics. In total, there are 10 different forms of the harmonic relation including also the two cases where $P,A,B,C$ are all geodesics/horocycles discussed before.

Let $A,B,C,P$ be four disjoint geodesics and horocycles in $\mathbb{H}$. Each of them divides $\mathbb{H}$ into two domains such that the other three lie in the same domain. Assume that $P,B,A,C$ are in a cyclic order. By combining Lemma \ref{computem} and the argument in Lemma \ref{recursive}, one can express $X:=PA$ in terms of $Y:=PB,Z:=PC,x:=AB,y:=AC,z:=BC$ in several different cases
\begin{lem}\label{mix}(Mixed Ptolemy and quadruplet relations) 
\vspace{-3mm}
\begin{itemize}
\item If $P,A,C$ are geodesics, and $B$ is a horocycle, then
\begin{itemize}
\item $X=\cfrac{xyY+xzZ+zY+\sqrt{(x^2+z^2+2xyz)(x^2+Y^2+2xYZ)}}{x^2},$
\item $x^2X^2+(y^2-1)Y^2+z^2Z^2-2XY(xy+z)-2YZ(yz+x)-2xzXZ=x^2+z^2+2xyz.$
\end{itemize}
\item If $P,B,C$ are geodesics, and $A$ is a horocycle, then
\begin{itemize}
\item $X=\cfrac{xyY+xzZ+zY+yZ+\sqrt{(y^2+z^2+2xyz)(x^2+Y^2+Z^2+2xYZ-1)}}{x^2-1},$
\item $(x^2-1)X^2+y^2Y^2+z^2Z^2-2XY(xy+z)-2yzYZ-2(xz+y)XZ=y^2+z^2+2xyz.$
\end{itemize}
\item If $P,B$ are geodesics, and $A,C$ are horocycles, then
\begin{itemize}
\item $X=\cfrac{xyY+xzZ+yZ+\sqrt{(y^2+2xyz)(x^2+Z^2+2xYZ)}}{x^2},$ 
\item $x^2X^2+y^2Y^2+z^2Z^2-2xyXY-2yzYZ-2(xz+y)XZ=y^2+2xyz.$
\end{itemize}
\item If $P,A$ are geodesics, and $B,C$ are horocycles, then
\begin{itemize}
\item $X=\cfrac{yY+zZ+\sqrt{(x+2yz)(x+2YZ)}}{x},$
\item $x^2X^2+y^2Y^2+z^2Z^2-2xyXY-2(yz+x)YZ-2xzXZ=x^2+2xyz.$
\end{itemize}
\item If $P$ is a geodesic, and $A,B,C$ are horocycles, then
\begin{itemize}
\item $X=\cfrac{yY+zZ+\sqrt{2yz(x+2YZ)}}{x},$
\item $x^2X^2+y^2Y^2+z^2Z^2-2xyXY-2yzYZ-2xzXZ=2xyz.$
\end{itemize}
\item If $P$ is a horocycle, and $A,B,C$ are geodesics, then
\begin{itemize}
\item $X=\cfrac{xyY+xzZ+zY+yZ+\sqrt{(x^2+y^2+z^2+2xyz-1)(Y^2+Z^2+2xYZ)}}{x^2-1},$
\item $(x^2-1)X^2+(y^2-1)Y^2+(z^2-1)Z^2-2(xy+z)XY-2(yz+x)YZ-2(xz+y)XZ=0.$
\end{itemize}
\item If $P,A$ are horocycles, and $B,C$ are geodesics, then
\begin{itemize}
\item $X=\cfrac{xyY+xzZ+zY+yZ+\sqrt{(y^2+z^2+2xyz)(Y^2+Z^2+2xYZ)}}{x^2-1},$
\item $(x^2-1)X^2+y^2Y^2+z^2 Z^2-2(xy+z)XY-2yzYZ-2(xz+y)XZ=0.$
\end{itemize}
\item If $P,A,B$ are horocycles, and $C$ is a geodesic, then
\begin{itemize}
\item $X=\cfrac{xyY+xzZ+zY+\sqrt{(z^2+2xyz)(Y^2+2xYZ)}}{x^2},$ 
\item $(x^2-1)X^2+y^2Y^2+z^2 Z^2-2(xy+z)XY-2yzYZ-2xzXZ=0.$
\end{itemize}
\item If $P,A,B,C$ are horocycles, then from Equation \ref{PennerPto}, we have
\begin{itemize}
\item $X=\cfrac{(\sqrt{yY}+\sqrt{zZ})^2}{x},$ 
\item $x^2X^2+y^2Y^2+z^2 Z^2-2xyXY-2yzYZ-2xzXZ=0.$
\end{itemize}
\end{itemize}
\end{lem}

\textbf{Remark.} Since the curvatures of geodesic and horocycle are $0$ and $1$ respectively, one can try to get a unifying formula that is applied in all cases of curves of constant curvatures in $[-1,1]$. One may also try to generalize the notion of signed distance between a horocycle and a geodesic mentioned in \cite{springborn2018hyperbolic}. In section \ref{determinant} we will present a unifying formula for all relations in Lemma \ref{mix} in the form of the Cayley-Menger determinant.

\subsection{A quintet of geodesics}\label{subsection 5g}
Let $A,P,B,C,D$ be five disjoint geodesics in $\mathbb{H}$ with a cyclic order. Each of them divides $\mathbb{H}$ into two domains such that the other four geodesics lie in the same domain. Let $O$ be the intersection between $AC$ and $BD$. With the notations in Items \ref{3.1a}, \ref{3.1b} and \ref{3.1c} in Section \ref{Notations}, one has the following relations:
\begin{lem}\label{quadrilateral}(Orthoquadrilateral relation)
$$\frac{PA.OC+PC.OA}{PB.OD+PD.OB}=\frac{\widetilde{AC}}{\widetilde{BD}}.$$
\end{lem}
\begin{proof}
Let $\alpha$ and $\beta$ be the angles between $OA,OP$ and $OB,OP$ as in Figure \ref{fig:quadri}.

\begin{figure}[H]
\begin{minipage}[t]{0.5\textwidth}
  \centering
  \leavevmode \SetLabels
  \L(.4*.34) $\alpha$\\%
  \L(.47*.36) $\beta$\\
  \endSetLabels
  \AffixLabels{\includegraphics[width=4cm,angle=0]{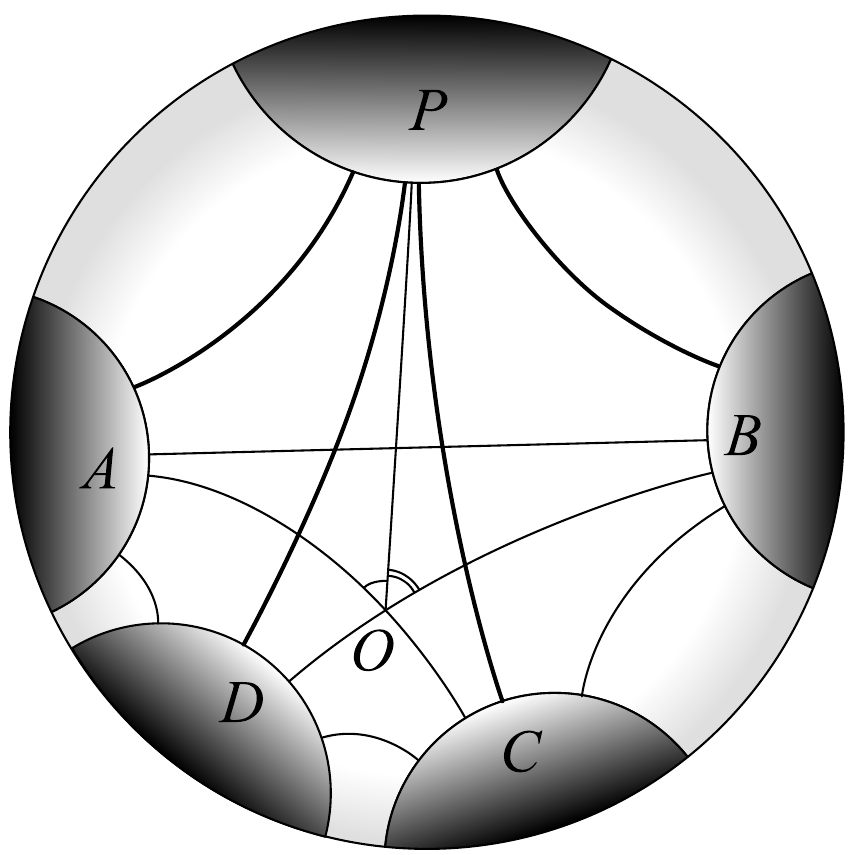}}
  \caption{A quintet of geodesics.}
  \label{fig:quadri}
\end{minipage}%
\begin{minipage}[t]{0.5\textwidth}
  \centering
  \leavevmode \SetLabels
  \L(.505*.4) $\alpha$\\%
  \L(.57*.415) $\beta$\\
  \endSetLabels
  \AffixLabels{\includegraphics[width=4cm,angle=0]{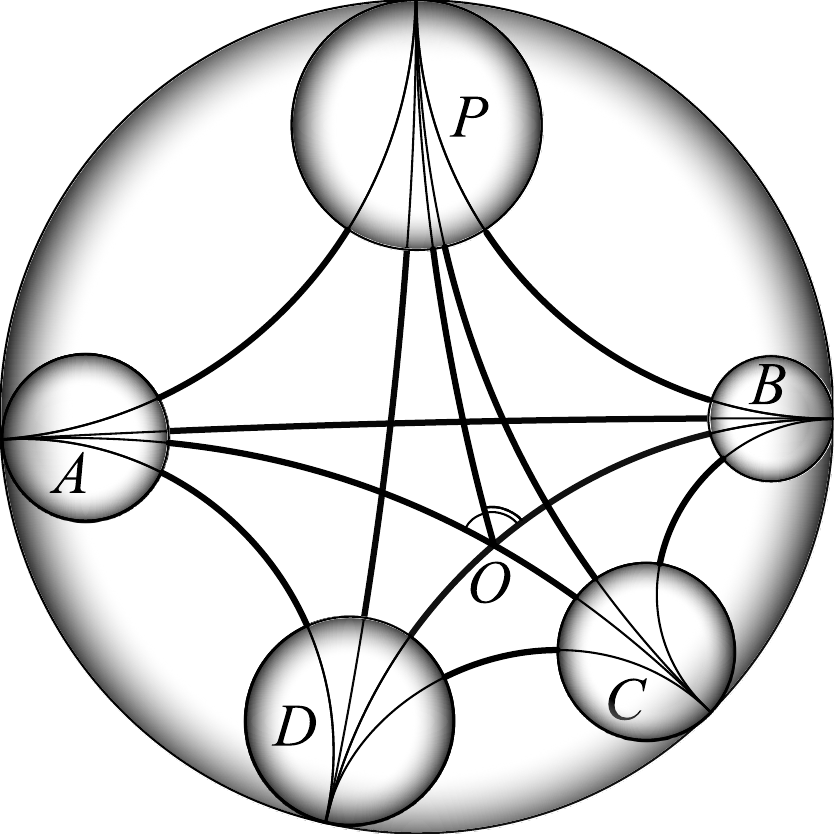}}
  \caption{A quintet of horocycles.}
  \label{fig:horo}
\end{minipage}
\end{figure}

Using the trigonometry formulae for hyperbolic pentagons with four right-angles (see \cite{beardon2012geometry,buser2010geometry}), one obtains:
$$PA=-OP.OA.\cos(\alpha)+\widetilde{OP}.\widetilde{OA};\,\,\,\,\,\,\,\, PB=-OP.OB.\cos(\beta)+\widetilde{OP}.\widetilde{OB};$$
$$PC=-OP.OC.\cos(\pi-\alpha)+\widetilde{OP}.\widetilde{OC};\,\,\,\,\,\,\,\, PD=-OP.OD.\cos(\pi-\beta)+\widetilde{OP}.\widetilde{OD}.$$
These equations imply that:
$$PA.OC+PC.OA=\widetilde{OP}(\widetilde{OA}.OC+\widetilde{OC}.OA),$$
$$PB.OD+PD.OB=\widetilde{OP}(\widetilde{OD}.OB+\widetilde{OB}.OD).$$
And so
$$\frac{PA.OC+PC.OA}{PB.OD+PD.OB}=\frac{\widetilde{OA}.OC+\widetilde{OC}.OA}{\widetilde{OD}.OB+\widetilde{OB}.OD}=\frac{\widetilde{AC}}{\widetilde{BD}}.$$
\end{proof}
The following are special cases of the orthoquadrilateral relation.
\begin{cor}\label{all}
With $P,A,B,C,D$ defined above, one has
\vspace{-3mm}
\begin{itemize}
\item[1.](Ortho-isosceles trapezoid): If $AD=BC$ and $AC=BD$ then: $$\dfrac{\overline{PA}^2-\overline{PB}^2}{\overline{AB}}=\dfrac{\overline{PD}^2-\overline{PC}^2}{\overline{CD}}.$$
\item[2.](Orthorectangle): If $AB=CD$, $AD=BC$ and $AC=BD$ then: $$\overline{PA}^2+\overline{PC}^2=\overline{PB}^2+\overline{PD}^2.$$
\item[3.](Orthoparallelogram): If $AB=CD$, $AD=BC$ then: $$\dfrac{PA+PC}{PB+PD}=\sqrt{\dfrac{AC-1}{BD-1}}.$$
\item[4.](Orthokite): If $AB=AD$, $CB=CD$ then: $$PD+PB=\dfrac{2(AC.BC+AB)}{AC^2-1}.PA+\dfrac{2(AC.AB+BC)}{AC^2-1}.PC.$$
\end{itemize}
\end{cor}
\begin{proof}
1. By using hyperbolic trigonometry for pentagons and right-angled hexagons, from the conditions $AD=BC$ and $AC=BD$, one can show that $OA=OB$, $OC=OD$ and then $$\frac{PA.OC+PC.OA}{PB.OC+PD.OA}=1.$$The last equality implies that:\,\,\,\, $\dfrac{OA}{OC}=\dfrac{PA-PB}{PD-PC}=\dfrac{\overline{PA}^2-\overline{PB}^2}{\overline{PD}^2-\overline{PC}^2}.$

We also have
$$AB=-OA.OB.\cos(\alpha+\beta)+\widetilde{OA}.\widetilde{OB}=-OA^2.\cos(\alpha+\beta)+OA^2-1,$$
$$CD=-OC.OD.\cos(\alpha+\beta)+\widetilde{OC}.\widetilde{OD}=-OC^2.\cos(\alpha+\beta)+OC^2-1.$$
And so: \,\,\,\,\,$\dfrac{\overline{AB}}{\overline{CD}}=\dfrac{OA}{OC}.$

3. By using hyperbolic trigonometry in pentagons and right-angled hexagons, from the conditions $AB=CD$ and $AD=BC$, one can show that $OA=OC=\overline{AC}$, $OC=OD=\overline{CD}$ and hence $$\frac{PA+PC}{PB+PD}=\frac{\overline{BD}}{\overline{AC}}.\frac{\widetilde{AC}}{\widetilde{BD}}=\sqrt{\frac{\overline{AC}^2-1}{\overline{BD}^2-1}}.$$
4. Denote $b:=AB=AD$, $c:=CB=CD$ and $a:=AC$. One can show that $PD>PB$, then by applying the relation of a quadruplet of geodesics (Corollary \ref{triangle}) for two quadruples $\{P;A,C,D\}$ and $\{P;A,C,B\}$, one has that $PB$ and $PD$ are two different roots of the following quadratic equation of variable $t$:
\begin{align*}
&PA^2(c^2-1)+PC^2(b^2-1)+t^2(a^2-1)\\
&=2(bc+a).PA.PC+2(ac+b).PA.t+2(ab+c).PC.t+a^2+b^2+c^2+2abc-1.
\end{align*}
Thus the sum of the two roots gives us the relation: $$PD+PB=\dfrac{2(ac+b)}{a^2-1}.PA+\dfrac{2(ab+c)}{a^2-1}.PC.$$
\end{proof}
\textbf{Remark.} If $P$ is identical to one of the four geodesics, then the orthorectangle relation becomes the Pythagorean relation. 
\subsection{A quintet of horocycles}\label{subsection 5h}
Let $A,P,B,C,D$ be five disjoint horocycles in $\mathbb{H}$ with a cyclic order. Each of them divides $\mathbb{H}$ into two domains such that the other four horocycles lie in the same domain. Let $O$ be the intersection between $AC$ and $BD$. Let $\alpha$ and $\beta$ be the angles between $OA,OP$ and $OB,OP$ respectively as in Figure \ref{fig:horo}.

With the notations in Section \ref{Notations} (Items \ref{3.2a} and \ref{3.2c}), and using standard calculations in the upper half-plane model of the hyperbolic plane, the following relations are obtained:
\begin{lem}\label{horopentagon} With $P,A,O$ and $\alpha$ defined above, one has
$$PA=OP.OA(1-\cos \alpha).$$
\end{lem}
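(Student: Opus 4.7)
The plan is to do a direct calculation in the upper half-plane model, as the author signals. First, I would normalize by an isometry of $\mathbb{H}$: place $O$ at $i$ and rotate so that the geodesic from $O$ perpendicular to $A$ is the upward vertical ray from $i$. Then $A$ becomes the horizontal horocycle $\{y = e^{d_1}\}$ with $d_1 := d_\mathbb{H}(O,A)$, and by the convention of Section \ref{Notations} we have $OA = \tfrac{1}{2}e^{d_1}$.

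Next I would locate the center $p$ of horocycle $P$ on $\partial\mathbb{H}$. The geodesic $OP$ is a semicircle through $i$ perpendicular to $\mathbb{R}$; imposing that its tangent at $i$ makes angle $\alpha$ with the upward vertical identifies the non-infinite endpoint as $p = \cot(\alpha/2)$. Then I would apply the Möbius isometry $\phi(z) = -1/(z-p)$, which sends $p$ to $\infty$ and the geodesic $Op$ to a vertical line; a routine computation gives
$$\phi(O) = \tfrac{\sin\alpha}{2} + i\,\tfrac{1-\cos\alpha}{2}.$$
Since $\phi(P)$ is a horocycle at $\infty$ through the point on the vertical line at hyperbolic distance $d_2 := d_\mathbb{H}(O,P)$ above $\phi(O)$, we obtain $\phi(P) = \bigl\{y = \tfrac{1-\cos\alpha}{2}\,e^{d_2}\bigr\}$. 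Likewise $\phi(A)$ is the horocycle tangent to $\mathbb{R}$ at $0$ whose topmost point on the imaginary axis is $(0, e^{-d_1})$ (one way to see this is to track the point $p + i\,e^{d_1}$ on $A$, which has image $ie^{-d_1}$).

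Finally, the common perpendicular of $\phi(A)$ and $\phi(P)$ is the imaginary axis, so
$$d_\mathbb{H}(A,P) \;=\; \log\!\bigl(\tfrac{1-\cos\alpha}{2}\,e^{d_1+d_2}\bigr),$$
and substituting into the Section \ref{Notations} conventions $PA = \tfrac{1}{2}e^{d(A,P)}$, $OA = \tfrac{1}{2}e^{d_1}$, $OP = \tfrac{1}{2}e^{d_2}$ rearranges to $PA = OP\cdot OA\,(1-\cos\alpha)$, as claimed. The only minor obstacle is keeping track of which side of each horocycle the point $O$ sits on (that is, which direction the $\log$ goes); but since the lemma's quintet is assumed to consist of pairwise disjoint horocycles in cyclic order around $O$, the three $y$-coordinates appearing above are ordered consistently and the $\log$'s come out with the correct positive sign automatically.
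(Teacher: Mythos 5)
Your computation is correct, and it is exactly the ``standard calculation in the upper half-plane model'' that the paper invokes for Lemma \ref{horopentagon} without writing it out: normalizing $O=i$ with $A$ centered at $\infty$, identifying $p=\cot(\alpha/2)$, and sending $p\to\infty$ indeed yields $e^{d_{\mathbb{H}}(A,P)}=\tfrac{1-\cos\alpha}{2}e^{d_1+d_2}$, which under the conventions of Section \ref{Notations} is precisely $PA=OP.OA(1-\cos\alpha)$. Your closing remark on the sign of the logarithm is the only delicate point, and the disjointness/cyclic-order hypothesis disposes of it as you say.
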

Applying Lemma \ref{horopentagon} and the Penner's Ptolemy relation, one can compute $OA,OB,OC,OD$ in terms of $AC,BD,AD,AB,CD,CB$ as follows:
\begin{lem}\label{OA} With $P,A,B,C,D$ and $O$ defined above, one obtains
$$OA=\frac{\overline{AC}}{2}\sqrt{\frac{\overline{AD}.\overline{AB}}{\overline{CD}.\overline{CB}}};\,\,\, OC=\frac{\overline{AC}}{2}\sqrt{\frac{\overline{CD}.\overline{CB}}{\overline{AD}.\overline{AB}}};\,\,\, OB=\frac{\overline{BD}}{2}\sqrt{\frac{\overline{BA}.\overline{BC}}{\overline{DA}.\overline{DC}}};\,\,\, OD=\frac{\overline{BD}}{2}\sqrt{\frac{\overline{DA}.\overline{DC}}{\overline{BA}.\overline{BC}}}.$$
\end{lem}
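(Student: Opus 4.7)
The plan is to apply Lemma~\ref{horopentagon} to each of the six pairs drawn from $\{A,B,C,D\}$ with common vertex $O$, and then combine the resulting six relations to isolate each of $OA, OB, OC, OD$. The starting observation is that $O$ lies on both geodesics $AC$ and $BD$: along $AC$ the perpendicular ray from $O$ to horocycle $A$ is opposite to the perpendicular ray from $O$ to $C$, so $\angle AOC=\pi$, and similarly $\angle BOD=\pi$. Setting $\theta:=\angle AOB$, the cyclic order then forces $\angle BOC=\angle DOA=\pi-\theta$ and $\angle COD=\theta$.

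First I would feed the two diagonal pairs into Lemma~\ref{horopentagon}, using $1-\cos\pi=2$, obtaining $AC=2\,OA\cdot OC$ and $BD=2\,OB\cdot OD$. Unpacking the notation $XY=2\overline{XY}^2$ for horocycles, this reads
\[
OA\cdot OC=\overline{AC}^2,\qquad OB\cdot OD=\overline{BD}^2.
\]
The four remaining ``side'' pairs, using $1-\cos\theta=2\sin^2(\theta/2)$ and $1-\cos(\pi-\theta)=2\cos^2(\theta/2)$, yield
\[
\overline{AB}^2=OA\cdot OB\,\sin^2(\theta/2),\quad \overline{CD}^2=OC\cdot OD\,\sin^2(\theta/2),
\]
\[
\overline{BC}^2=OB\cdot OC\,\cos^2(\theta/2),\quad \overline{AD}^2=OA\cdot OD\,\cos^2(\theta/2).
\]

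Next I would eliminate $\theta$. Multiplying the two ``$\sin$'' equations and using $OA\cdot OB\cdot OC\cdot OD=\overline{AC}^2\overline{BD}^2$ from the diagonal relations gives $\sin^2(\theta/2)=\overline{AB}\,\overline{CD}/(\overline{AC}\,\overline{BD})$, and analogously $\cos^2(\theta/2)=\overline{AD}\,\overline{BC}/(\overline{AC}\,\overline{BD})$. Adding these and invoking $\sin^2+\cos^2=1$ recovers Penner's Ptolemy relation $\overline{AC}\,\overline{BD}=\overline{AB}\,\overline{CD}+\overline{AD}\,\overline{BC}$, serving as a consistency check. Substituting these expressions back into the side equations isolates
\[
OA\cdot OB=\frac{\overline{AB}\,\overline{AC}\,\overline{BD}}{\overline{CD}},\qquad OA\cdot OD=\frac{\overline{AD}\,\overline{AC}\,\overline{BD}}{\overline{BC}}.
\]
Multiplying these and dividing by $OB\cdot OD=\overline{BD}^2$ isolates $OA^2$; taking the positive square root delivers the formula for $OA$, and the formulas for $OB,OC,OD$ follow by the manifest cyclic symmetries of the configuration.

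The only real delicacy I anticipate is justifying the angle picture: one must verify that $O$ lies in the bounded component of $\mathbb{H}$ outside all four horocycles, so that the geodesic segments from $O$ to the feet of $A$ and of $C$ really do concatenate along $AC$ (giving $\angle AOC=\pi$, not $0$), similarly for $BD$, and so that $\theta\in(0,\pi)$ is well-defined. This follows from the cyclic ordering $A,P,B,C,D$ and the disjointness hypothesis, and can be verified explicitly in the upper half-plane model by placing one of the horocycles at $\infty$.
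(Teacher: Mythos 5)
Your strategy is exactly the one the paper intends: for Lemma \ref{OA} the paper offers no detailed argument beyond the instruction to combine Lemma \ref{horopentagon} with Penner's Ptolemy relation, and your execution of this is essentially sound. Applying the relation $UV=OU\cdot OV(1-\cos\theta)$ to the two diagonal pairs (angle $\pi$, since $O$ lies on $AC$ and on $BD$) and to the four side pairs (angles $\theta$ and $\pi-\theta$), then eliminating $\theta$, is a correct and complete route to expressing $OA,OB,OC,OD$ in the lambda-length data; obtaining Ptolemy as a by-product instead of invoking it is harmless, and your closing remark about checking that $O$ lies outside all four horocycles is the right delicacy to flag.

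However, your last step does not actually deliver the printed formula. From your own relations $OA\cdot OB=\overline{AB}\,\overline{AC}\,\overline{BD}/\overline{CD}$, $OA\cdot OD=\overline{AD}\,\overline{AC}\,\overline{BD}/\overline{BC}$ and $OB\cdot OD=\overline{BD}^2$ one gets $OA=\overline{AC}\sqrt{\overline{AB}\,\overline{AD}/(\overline{CD}\,\overline{CB})}$, which is \emph{twice} the value stated in the lemma, and likewise for $OB,OC,OD$. This is not an algebra slip on your part: under the paper's declared notation ($OA=\tfrac12 e^{d_{\mathbb H}(O,A)}$, $\overline{XY}=\tfrac12 e^{d_{\mathbb H}(X,Y)/2}$), Lemma \ref{horopentagon} applied to the pair $(A,C)$ with angle $\pi$ forces $OA\cdot OC=\tfrac12\,AC=\overline{AC}^2$, whereas the printed formulas would give $OA\cdot OC=\overline{AC}^2/4$; a direct computation in the upper half-plane (say $A$ the horocycle at $\infty$ of height $h$, $B,C,D$ horocycles at $-1,0,1$, so $O=(0,1)$) confirms your constant, e.g.\ $OA=h/2$ while the printed expression evaluates to $h/4$. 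The statement as printed is what you get if $\overline{XY}$ is read as the full lambda length $e^{d/2}$ rather than half of it. So you should either record this factor-of-two normalization discrepancy explicitly, or adjust the target statement accordingly; as written, the assertion that taking the positive square root ``delivers the formula for $OA$'' is the one step of your argument that is not true.
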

Applying these above formulae and the argument as in the proof of Lemma \ref{quadrilateral}, one obtains the following relation:
\begin{lem}\label{quadrilateral*}(Orthoquadrilateral relation of horocycles) With $P,A,B,C,D$ and $O$ defined above, one has
$$\frac{PA.OC+PC.OA}{PB.OD+PD.OB}=\frac{AC}{BD}.$$
Equivalently, $$\frac{PA.\overline{CD}.\overline{CB}+PC.\overline{AD}.\overline{AB}}{PB.\overline{DA}.\overline{DC}+PD.\overline{BA}.\overline{BC}}=\frac{\overline{AC}}{\overline{BD}}.$$
\end{lem}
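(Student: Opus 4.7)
The plan is to mimic the proof of the geodesic version (Lemma \ref{quadrilateral}) but with Lemma \ref{horopentagon} replacing the pentagon trigonometry and Lemma \ref{OA} providing the closed-form values of $OA, OB, OC, OD$. First I fix the angles: let $\alpha$ be the angle between $OA$ and $OP$ and $\beta$ the angle between $OB$ and $OP$, as in Figure \ref{fig:horo}. Because $O$ lies on the geodesics $AC$ and $BD$, and because $P,A,B,C,D$ occur in cyclic order around $O$, the angles between $OP$ and $OC$, respectively between $OP$ and $OD$, are $\pi-\alpha$ and $\pi-\beta$.

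Next I apply Lemma \ref{horopentagon} four times to get
\begin{align*}
PA &= OP\cdot OA\,(1-\cos\alpha), & PC &= OP\cdot OC\,(1+\cos\alpha), \\
PB &= OP\cdot OB\,(1-\cos\beta), & PD &= OP\cdot OD\,(1+\cos\beta).
\end{align*}
Multiplying the first line by $OC$ and $OA$ respectively and adding, the $\cos\alpha$ terms cancel, so $PA\cdot OC + PC\cdot OA = 2\,OP\cdot OA\cdot OC$. An identical computation yields $PB\cdot OD + PD\cdot OB = 2\,OP\cdot OB\cdot OD$. Taking the ratio, the common factor $2\,OP$ cancels and we are left with
$$\frac{PA\cdot OC + PC\cdot OA}{PB\cdot OD + PD\cdot OB} \;=\; \frac{OA\cdot OC}{OB\cdot OD}.$$

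Now I invoke Lemma \ref{OA}: the products $OA\cdot OC$ and $OB\cdot OD$ collapse to $\overline{AC}^2/4$ and $\overline{BD}^2/4$ respectively, because the square roots in the two factors are reciprocals of one another. Using the convention of Section \ref{Notations}, for horocycles one has $AC = \tfrac12 e^{d_\mathbb{H}(A,C)} = 2\overline{AC}^2$ and similarly for $BD$, so $OA\cdot OC/(OB\cdot OD) = \overline{AC}^2/\overline{BD}^2 = AC/BD$. This gives the first form of the identity.

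For the equivalent second form, I substitute the full expressions for $OA, OB, OC, OD$ from Lemma \ref{OA} into the numerator and denominator of the left-hand side; the prefactor $\overline{AC}/2$ factors out of the numerator and $\overline{BD}/2$ out of the denominator, and the common radical $\sqrt{\overline{AD}\,\overline{AB}\,\overline{CD}\,\overline{CB}} = \sqrt{\overline{DA}\,\overline{DC}\,\overline{BA}\,\overline{BC}}$ cancels between numerator and denominator. Combined with $AC/BD = \overline{AC}^2/\overline{BD}^2$, rearranging yields the claimed expression in the $\overline{XY}$ notation. The main obstacle is purely bookkeeping: keeping the cyclic order of the angles at $O$ straight so that the supplementary pairs $(\alpha,\pi-\alpha)$ and $(\beta,\pi-\beta)$ line up with the correct horocycle pairs, which is ensured by the assumption on cyclic order.
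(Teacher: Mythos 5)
Your proof is correct and is essentially the argument the paper intends: the paper only sketches it by saying to combine Lemma \ref{horopentagon} and Lemma \ref{OA} with the cancellation argument from Lemma \ref{quadrilateral}, which is exactly what you carry out (supplementary angles at $O$, cancellation of the cosine terms to get $2\,OP\cdot OA\cdot OC$ and $2\,OP\cdot OB\cdot OD$, then $OA\cdot OC=\overline{AC}^2/4$, $OB\cdot OD=\overline{BD}^2/4$ and $AC=2\overline{AC}^2$). Your derivation of the second, equivalent form by substituting Lemma \ref{OA} and cancelling the common radical is likewise the intended bookkeeping.
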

Similarly, one also has the following properties of Penner's lambda lengths in special cases:
\begin{cor}\label{isosceles*}
With $P,A,B,C,D$ defined above, one has
\vspace{-3mm}\begin{itemize}
\item{}(Ortho-isosceles trapezoid): If $AD=BC$ and $AC=BD$ then: $$\dfrac{\overline{PA}^2-\overline{PB}^2}{\overline{AB}}=\dfrac{\overline{PD}^2-\overline{PC}^2}{\overline{CD}}.$$
\item{}(Orthorectangle): If $AB=CD$, $AD=BC$ and $AC=BD$ then: $$\overline{PA}^2+\overline{PC}^2=\overline{PB}^2+\overline{PD}^2.$$
\item{}(Orthoparallelogram): If $AB=CD$, $AD=BC$ then: $$\dfrac{\overline{PA}^2+\overline{PC}^2}{\overline{PB}^2+\overline{PD}^2}=\dfrac{\overline{AC}}{\overline{BD}}.$$
\item (Orthokite): If $AB=AD$, $CB=CD$ then: $$PD+PB=\sqrt{\dfrac{BD.BC}{AC.AB}}.PA+\sqrt{\dfrac{BD.AB}{AC.BC}}.PC.$$
\end{itemize}
\end{cor}

\subsection{A unifying formula in the form of the Cayley-Menger determinant}\label{determinant}
In this section, one will see that all of the relations in previous sections can be put into a unifying formula in the form of the Cayley-Menger determinant. Let $|UV|$ be the Euclidean distance between two arbitrary points $U$ and $V$ in the Euclidean space. In the field of distance geometry, the Cayley-Menger determinant allows us to compute the volume of an $n-$simplex in the Euclidean space in terms of the squares of all the distances between pairs of its vertices. In a special case, if $A,B,C,D$ are four points in the Euclidean plane, one has a relation as follows:
$$\det\begin{bmatrix}
 0& 1 & 1&1&1\\
1 & 0 & |AB|^2 & |AC|^2&|AD|^2\\
1&|AB|^2&0&|BC|^2&|BD|^2\\
1&|AC|^2&|BC|^2&0&|CD|^2\\
1&|AD|^2&|BD|^2&|CD|^2&0
\end{bmatrix}=0.$$ 
If $A,B,C,D$ are four points in the hyperbolic plane, with the notation in Item \ref{3.3a} of Section \ref{Notations}, one has a similar formula  (see \cite{blumenthal1943distribution}) as follows:
$$\det\begin{bmatrix}
-2& 1 & 1&1&1\\
1 & 0 & \overline{AB}^2 & \overline{AC}^2&\overline{AD}^2\\
1&\overline{AB}^2&0&\overline{BC}^2&\overline{BD}^2\\
1&\overline{AC}^2&\overline{BC}^2&0&\overline{CD}^2\\
1&\overline{AD}^2&\overline{BD}^2&\overline{CD}^2&0
\end{bmatrix}=0.$$
Now we consider geodesics and horocycles in the hyperbolic plane. Let $X$ be a curve of constant curvature in $\mathbb{H}$, denote by $\kappa_{X}$ the geodesic curvature of $X$. Thus $\kappa_{X}=0$ or $1$ if $X$ is a geodesic or a horocycle respectively. With the notations in Items \ref{3.1b} and \ref{3.2c} of Section \ref{Notations}, the relations of quadruplet of geodesics/horocycles in Lemma \ref{mix} can be written in a unifying form as follows
\begin{thm}\label{cayle}
 Let $A,B,C,D$ be four disjoint geodesics/horocycles, each of them divides $\mathbb{H}$ into two domains such that the other three lie in the same domain. Then
$$\det\begin{bmatrix}
2& 1-\kappa_{A} & 1-\kappa_{B}&1-\kappa_{C}&1-\kappa_{D}\\
1-\kappa_{A} & 0 & \overline{AB}^2 & \overline{AC}^2&\overline{AD}^2\\
1-\kappa_{B}&\overline{AB}^2&0&\overline{BC}^2&\overline{BD}^2\\
1-\kappa_{C}&\overline{AC}^2&\overline{BC}^2&0&\overline{CD}^2\\
1-\kappa_{D}&\overline{AD}^2&\overline{BD}^2&\overline{CD}^2&0
\end{bmatrix}=0.$$
\end{thm}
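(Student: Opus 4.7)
The plan is to verify the identity configuration-by-configuration, reducing to the quadruplet relations already established in Lemma \ref{mix} and Corollary \ref{triangle}. Writing $\epsilon_i := 1 - \kappa_{A_i} \in \{0,1\}$, I first note that on $\{0,1\}$ the square of the half-trace/half-lambda length takes the uniform form
$$\overline{A_iA_j}^2 = \tfrac{1}{4}\bigl(e^{d_{ij}} + \epsilon_i\epsilon_j(2 + e^{-d_{ij}})\bigr),$$
so the determinant $D$ is a polynomial of degree at most two in each $\epsilon_i$. Modulo the relations $\epsilon_i^2 = \epsilon_i$, the polynomial $D$ becomes multilinear in $(\epsilon_1,\ldots,\epsilon_4)$ and is therefore determined on $\{0,1\}^4$ by its sixteen vertex values. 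By the $S_4$-symmetry of the determinant under simultaneous row-and-column permutations of the $A_i$, these sixteen configurations collapse to five equivalence classes, indexed by the number of horocycles among $\{A_1,A_2,A_3,A_4\}$.

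For each of the five representatives I would substitute the appropriate expression for $\overline{A_iA_j}^2$ from the Notation section, expand the $5\times 5$ determinant along the first row, and match the resulting polynomial against the relevant previously established identity: Corollary \ref{triangle} when all four are geodesics, the square of Penner's Ptolemy relation when all four are horocycles, and a single representative from Lemma \ref{mix} for each of the three intermediate cases (the three ``one-horocycle'' items, the three ``two-horocycle'' items, and the two ``three-horocycle'' items of Lemma \ref{mix} are mutually equivalent within their respective classes, after $S_4$-relabeling translates the $P$-centric formulation there into the fully symmetric formulation here). In each case the expansion yields a hollow $4\times 4$ minor that contributes the main quadratic core in the $\overline{A_iA_j}^2$, plus cofactor terms weighted by the $\epsilon_i$ that supply precisely the constant and linear corrections distinguishing the displayed equations of Lemma \ref{mix} from one another.

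The main obstacle is the algebraic bookkeeping of the mixed cases: each $5\times 5$ expansion generates many cross-terms, and matching these coefficient-by-coefficient with the target quadratic relation of Lemma \ref{mix} requires careful accounting of which $\epsilon_i$ factor pairs with which $\overline{A_jA_k}^2$ term. A cleaner alternative I would pursue in parallel is a Gram-matrix interpretation in the Lorentzian model $\mathbb{R}^{2,1}$ of $\mathbb{H}^2$: each geodesic lifts to a spacelike unit vector and each horocycle to a null vector, the pairwise inner products $\langle v_i,v_j\rangle$ recover $\overline{A_iA_j}^2$ with $1-\kappa_{A_i}$ playing the role of an augmenting coordinate, and since four vectors in a three-dimensional Lorentz space are linearly dependent, the corresponding augmented $5\times 5$ Gram determinant vanishes identically. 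Pinning down the exact normalization so that this Lorentzian picture reproduces the theorem verbatim is itself a nontrivial computation, but if successful it yields a uniform basis-free proof that would naturally motivate the higher-dimensional generalization conjectured in the introduction.
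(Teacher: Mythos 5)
Your primary route---checking the determinant configuration-by-configuration against Corollary \ref{triangle}, the squared Penner relation, and one representative from each horocycle-count class of Lemma \ref{mix}, using the $S_4$-symmetry of the matrix---is exactly how the paper treats Theorem \ref{cayle}, which it presents (without a separate argument) as the unified determinant form of those case-by-case quadruplet relations. Your Lorentzian Gram-matrix sketch is an attractive alternative that the paper does not pursue, but as you note its normalization is not pinned down, so the case-check remains the proof.
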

\begin{proof}
Direct verification of all the cases (Lemma \ref{mix}).
\end{proof}
\textbf{Remark.} 
We suspect that the result can be generalized to higher dimensions in the following form: 
Let $U$ and $V$ be two arbitrarily disjoint hyperbolic spaces or horoballs of co-dimension 1 in $\mathbb{H}^n$. Denote \(\kappa_U\) and \(\kappa_V\) as their respect geodesic curvatures. We define $\overline{UV}$ as follows: $$\overline{UV} :=\frac{e^{\frac{1}{2}d_{\mathbb{H}}(U,V)}+(1-\kappa_U)(1-\kappa_V)e^{-\frac{1}{2}d_{\mathbb{H}}(U,V)}}{2},$$ where $d_{\mathbb{H}}(U,V)$ is the hyperbolic distance between $U$ and $V$. Let $\{A_1,A_2,...,A_k,A_{k+1},A_{k+2}\}$ be the set of $k+2$ disjoint hypersurfaces of constant geodesic curvatures in $n$-dimensional hyperbolic space $\mathbb{H}^n$ ($k \geq n$), each of them divides $\mathbb{H}^n$ into two domains such that the other $k+1$ hypersurfaces lie in the same domain. If $\kappa_{A_i}\in\{0,1\}$ for all $i\in \{1,2,...,k+2\}$, then  

$$\det\begin{bmatrix}
2& 1-\kappa_{A_1} & 1-\kappa_{A_2}&1-\kappa_{A_3}&\cdots &1-\kappa_{A_{k+2}}\\
1-\kappa_{A_1} & 0 & \overline{A_1A_2}^2 & \overline{A_1A_3}^2& \cdots &\overline{A_1A_{k+2}}^2\\
1-\kappa_{A_2}&\overline{A_2A_1}^2&0&\overline{A_2A_3}^2& \cdots &\overline{A_2A_{k+2}}^2\\
1-\kappa_{A_3}&\overline{A_3A_1}^2&\overline{A_3A_2}^2&0&\cdots &\overline{A_3A_{k+2}}^2\\
  \vdots&  \vdots&  \vdots& \vdots& \ddots & \vdots \\
1-\kappa_{A_{k+2}}&\overline{A_{k+2}A_1}^2&\overline{A_{k+2}A_2}^2 & \overline{A_{k+2}A_{3}}^2& \cdots&0
\end{bmatrix}=0.$$
Note that, this formula holds for the case that $n =2, k=3$ and $A_i$'s are all either geodesics or horocycles as proved in Subsections \ref{subsection 5g} and \ref{subsection 5h}.

\section{Further questions and remarks} \label{questionremarks}
\subsection{Questions following from Section \ref{reflection}}
In a similar vein to Theorem \ref{thm:reflection} in Section \ref{reflection}, one can ask the following question
\begin{que}
Is it true that: 
\bi
\item[a.] All e-ortho-isosceles-trapezoids, e-orthokites, and e-orthorectangles arise from reflection involutions; 
\item[b.] There is no e-orthorhombus; 
\item[c.] If $S$ is not a one-holed torus, then any e-orthoparallelogram on $S$ is an e-orthorectangle. 
\ei
\end{que}

One can also come up with a conjecture closely related to the length equivalent problem of closed geodesics on hyperbolic surfaces stated in \cite{anderson2003variations} and \cite{leininger2003equivalent}:
\begin{conj}
Two orthogeodesics $\alpha$ and $\beta$ are length equivalent if and only if there is a finite sequence of reflection involutions $r_1,r_2,...,r_n$ so that $r_1 \circ r_2 \circ ... \circ r_n (\alpha)=\beta$. In other words, there is a finite sequence of e-ortho-isosceles-trapezoids ``connecting'' $\alpha$ and $\beta$. 

\end{conj}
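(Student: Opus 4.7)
The easy direction is to show that any finite chain of reflection involutions preserves length equivalence. By the construction in Section \ref{reflection}, if $r$ is a reflection involution on an immersed pair of pants $P\subset S$ whose symmetry axis is a seam $\gamma$, then $r$ extends to a reflection of the universal cover $\mathbb{H}$ across a lift of $\gamma$. For any orthogeodesic $\alpha$ on $S$ intersecting the interior of $P$, a lift of $\alpha$ and its reflected image form either an r-ortho-isosceles-trapezoid or an r-orthokite, so $\ell_{S^\star{}}(\alpha)=\ell_{S^\star{}}(r(\alpha))$ for every $S\in\mathcal{T}(\Sigma)$. Composing such reflections and using transitivity of length equivalence yields the ``if'' direction, and the successive r-orthoshapes give the desired chain.

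For the converse I would translate the problem through the associated-closed-geodesic correspondence recalled in Section \ref{Pre}. Each orthogeodesic $\eta$ is a seam of a unique maximal immersed pair of pants $P^{*}$ with boundary components $(\beta_1,\beta_2,\beta_3)$, and $\ell_{S^\star{}}(\eta)$ is the explicit symmetric function of $a_i=\cosh(\ell(\beta_i)/2)$ given in the preliminaries. Hence if $\alpha$ and $\beta$ are length equivalent, the unordered triples of associated boundary geodesics must agree as length functions on $\mathcal{T}(\Sigma)$, that is, be simultaneously length equivalent as triples of simple closed geodesics. From here I would invoke the Anderson--Leininger framework \cite{Anderson03}, \cite{Leninger}: in every known instance such an equivalence is realised by an ambient symmetry of some finite cover that descends to an isometry of an immersed subsurface. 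The proposed next step is to express this subsurface-level symmetry as a composition of reflection involutions on pairs of pants inside the subsurface, tracking the orbit of $\alpha$ one seam at a time.

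The main obstacle is precisely this final factorisation. Equality of length functions on $\mathcal{T}(\Sigma)$ does not automatically arise from an ambient isometry of $S$; the parallel open conjecture for simple closed geodesics is exactly this gap. Even when a subsurface symmetry exists, only those restricting to reflections fixing seams of immersed pairs of pants are admissible in the present formulation, so one must either refine the Leininger construction to produce symmetries of this restricted type or exhibit a genuinely new length-equivalence move outside the reflection-involution class, which would refute the conjecture. I would first test the conjecture on the one-holed torus and on surfaces admitting a decomposition into isosceles pairs of pants, where the recursive and edge relations of Section \ref{Isosceles trapezoid and rectangle relations} reduce the matching of associated triples to an explicit combinatorial problem on the tree $T_1$; a positive answer there would both indicate the correct inductive scheme and confirm that the reflection-involution class of moves is the right one, while a negative example would immediately settle the conjecture in the negative.
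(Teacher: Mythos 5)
This statement is posed in the paper as an open conjecture: the paper contains no proof of it, and explicitly ties it to the unresolved length-equivalence problem of \cite{Anderson03} and \cite{Leninger}. Your proposal reflects this accurately, but it is a research programme rather than a proof, so it cannot be counted as establishing the statement. Your first paragraph (the ``if'' direction) is essentially the content of the lemmas in Section \ref{reflection} and is sound in spirit, with one imprecision: a reflection involution $r$ is only defined on an immersed pair of pants $P$, so $r(\alpha)$ only makes sense when $\alpha$ is a common orthogeodesic of $S$ and $P$ (as in the paper's lemma), not for an arbitrary orthogeodesic of $S$ that merely meets the interior of $P$; for such arcs $r(\alpha)$ need not be an orthogeodesic at all, so the chain must be built from seam-type data at each stage.

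The genuine gap is the converse, and it occurs at two places. First, your reduction step is not automatic: from $\ell_{S^\star{}}(\alpha)=\ell_{S^\star{}}(\beta)$ for all $S$ and the explicit formula in Section \ref{Pre} you conclude that the unordered triples of associated boundary geodesics ``must agree as length functions,'' but that formula is a single symmetric expression in $(a_1,a_2,a_3)$, and equality of its values for all hyperbolic structures does not by itself force termwise identification of the triples; the paper handles this by asserting (without proof here) that length-equivalent orthogeodesics share their boundary pair and then comparing only the associated third curves $\beta_3$. Second, and more fundamentally, invoking the Anderson--Leininger framework gives you, at best, that known examples of length equivalence come from symmetries of immersed subsurfaces; it gives no factorisation of such a symmetry into reflection involutions fixing seams of immersed pairs of pants, and no argument that every length equivalence arises this way. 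That missing factorisation is exactly the content of the conjecture, as you yourself note in your final paragraph, so the ``only if'' direction remains entirely open in your proposal. Your suggested test cases (one-holed torus, surfaces glued from isosceles pairs of pants, using the edge and recursive relations on $T_1$) are reasonable places to probe the conjecture, but they are not part of a proof.
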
 
Note that each e-ortho-isosceles-trapezoid consists of 6 orthogeodesics which are associated with at most 8 closed geodesics (including also at most two simple closed geodesics at the boundary). Thus the identity relations of orthogeodesics can be translated to trace-type identities of closed geodesics. Also, each closed geodesic is a boundary of infinitely many immersed pairs of pants, and each of them is associated with an (improper) orthogeodesics (an arc perpendicular to two closed geodesics), thus it would be interesting to study the set of all (improper) orthogeodesics.
\subsection{Questions following from Section \ref{integralspectrum}}

Each orthobasis of an ortho-integral surface generates a collection of Diophantine equations.
\bq
 Is it true that we can find all the integer solutions (up to signs) of these Diophantine equations within the set of triples of traces of neighboring orthogeodesics?

\eq

Our current understanding of ortho-integral surfaces is limited to a finite list as presented in Theorem \ref{Thm: integral surfaces}. This limitation naturally leads to the following questions.
\begin{que}
Are there infinitely many ortho-integral surfaces? Is it true that surfaces formed by gluing any finite number of ortho-integral pairs of pants $P(2,2,2)$ (or $P(3,3,3)$) without twisting remain ortho-integral?
\end{que}

\subsection{Final remarks}
Many ideas in this paper are still in the early stages and require further generalization. As mentioned in the introduction, numerous intriguing number-theoretical questions arise in relation to the set of orthogeodesics on ortho-integral surfaces, akin to those found in integral Apollonian circle packing. Besides that, classifying all ortho-integral surfaces remains a significant challenge. We are excited to explore these questions further, and this will be the focus of our forthcoming work.

\bibliographystyle{amsplain}
{
\small
\bibliography{main}
}
 \footnotesize{\textsc{Department of Mathematics, National University of Singapore, Singapore}\\
\footnotesize{\textsc{\& Institute of Mathematics, Vietnam Academy of Science and Technology, Hanoi, Vietnam}\\
\textsc{Email address: dnminh@math.ac.vn}}

\end{document}